\def\R{\mathbb{R}}
\def\M{\mathcal{M}}
\def\R{\mathbb{R}}
\def\M{\mathcal{M}}
\def\it{\texttt{it}}
\def\x{\bold{x}}
\newcommand{\dsum}{\displaystyle\sum}
\newtheorem{theorem}{Theorem}
\newtheorem{lemma}[theorem]{Lemma} 
\newtheorem{definition}{Definition}[section]
\let\origmaketitle\maketitle
\def\maketitle{
	\begingroup
	\def\uppercasenonmath##1{} 
	\let\MakeUppercase\relax 
	\origmaketitle
	\endgroup
}
\begin{document}

\title[]{\Large Identifying Self-Amplifying Hypergraph Structures through Mathematical Optimization}

\author[V. Blanco, G. Gonz\'alez, \MakeLowercase{and} P. Gagrani]{
{\large V\'ictor Blanco$^{\dagger}$, Gabriel Gonz\'alez$^{\dagger}$, and Praful Gagrani$^{\dagger}$}\medskip\\
$^\dagger$Institute of Mathematics (IMAG), Universidad de Granada\\
$^\star$Institute of Industrial Science, University of Tokyo\\
\texttt{vblanco@ugr.es}, \texttt{ggdominguez@ugr.es}, \texttt{praful@sat.t.u-tokyo.ac.jp}
}

\maketitle

\begin{abstract}
In this paper, we introduce the concept of \emph{self-amplifying structures} for hypergraphs, positioning it as a key element for understanding propagation and internal reinforcement in complex systems. To quantify this phenomenon, we define the maximal amplification factor, a metric that captures how effectively a subhypergraph contributes to its own amplification. We then develop an optimization-based methodology to compute this measure. Building on this foundation, we tackle the problem of identifying the subhypergraph maximizing the amplification factor, formulating it as a mixed-integer nonlinear programming (MINLP) problem. To solve it efficiently, we propose an exact iterative algorithm with proven convergence guarantees. In addition, we report the results of extensive computational experiments on realistic synthetic instances, demonstrating both the relevance and effectiveness of the proposed approach. Finally, we present a case study on chemical reaction networks—including the Formose reaction and E. coli core metabolism—where our framework successfully identifies known and novel autocatalytic subnetworks, highlighting its practical relevance to systems chemistry and biology.
\end{abstract}
 
\keywords{
Hypergraphs, Amplification,  Mathematical Optimization,  Generalized Fractional Programming, Networks}

\section{Introduction \label{sec: introduccion}}

Networks are combinatorial structures composed of nodes and arcs (or edges) that represent the interactions between entities. Typically, nodes model the components of a system, while arcs capture their relationships. Modeling systems as networks has proven essential for uncovering structural and dynamical properties across a wide range of domains. Over time, networks have become a central object of study in both theoretical and applied mathematics, with significant advances from multiple perspectives, including algebraic spectral graph theory~\citep{chung1997spectral}, geometric graph embeddings~\citep{heinonen2003geometric}, combinatorial graph theory~\citep{harris2008combinatorics}, arithmetic graph theory~\citep{acharya1990arithmetic}, and combinatorial optimization~\citep{cook1994combinatorial}. This extensive toolkit has rendered graph theory indispensable for modeling, analyzing, and optimizing complex systems.

However, a key limitation of classical network models is their inherently pairwise nature: each arc connects exactly two nodes. While this suffices for many applications—such as social networks, where an arc may indicate a "follows" relationship between users, or logistics networks, where arcs represent transportation routes—many real-world interactions involve more complex, multi-way relationships.

Hypergraphs provide a natural generalization of networks by allowing edges (hyperarcs) to connect multiple nodes simultaneously. This extended framework has recently proven crucial in modeling higher-order interactions across disciplines. For example, in biology and chemistry, reactions often involve multiple reactants and products~\citep{andersen_chemical_2019}; in manufacturing and process engineering, multiple input materials may be transformed into several outputs~\citep{nagy2022hypergraph}; and in multi-agent systems, tasks may require coordinated actions from several agents to produce multiple outcomes~\citep{wang2024hyper}. A sophisticated application of hypergraph encoding includes also complex model representation~\cite{michelena1997hypergraph}.  Despite their expressiveness, hypergraphs are still comparatively underexplored—particularly when integrated into combinatorial optimization models. Moreover, while many results in graph theory extend naturally to hypergraphs, others break down completely. For instance, the minimum edge cover problem, solvable in polynomial time for graphs~\citep{garey1979computers}, becomes NP-hard in hypergraphs~\citep{repisky2007covering}. Similar complexity jumps occur for maximum-cardinality matching~\citep{berman2003improved}, the minimum vertex cover problem~\citep{guruswami2010inapproximability}, and even shortest path computations in directed hypergraphs~\citep{ausiello2017directed}. The situation becomes even more intricate in directed or multiset hypergraphs, where many foundational questions remain open. Thus, while hypergraphs offer greater modeling power, they also present new challenges and opportunities in the development of combinatorial optimization tools.

In this work, we investigate a combinatorial optimization problem on hypergraphs that arises in the analysis of different types of  propagation processes in dynamical systems~\cite{carletti2020dynamical,ferraz2021phase}. In large hypergraphs—used to model temporal evolution—understanding system dynamics involves identifying the key elements and interactions that drive change. This task is particularly challenging in real-world networks due to their complexity and scale. For instance, in biochemical networks, detecting the specific motifs responsible for evolutionary transitions can help answer questions related to the \emph{Origin of Life}—that is, which molecules, and under what conditions, catalyzed the emergence of life~\citep{hordijk2018autocatalytic,hordijk2004detecting,hordijk_autocatalytic_2018}. Similarly, in social networks, identifying the most influential users and their interactions helps advertisers effectively target and amplify their campaigns. In manufacturing, understanding how certain inputs contribute disproportionately to output can guide resource allocation and pricing strategies.

We introduce the notion of a \emph{self-amplifying subhypergraph} in the context of multi-directed hypergraphs. A self-amplifying subhypergraph comprises a subset of nodes and hyperarcs from a larger hypergraph, along with a distinguished set of nodes that appear as inputs, capturing the idea of \emph{self}. This substructure satisfies an amplification property: through appropriate combinations of the included hyperarcs, the net production of the distinguished nodes exceeds their net consumption. This captures the essence of a subsystem capable of reinforcing or amplifying its population via internal interactions. While similar notions have appeared in domains such as chemical reaction networks (e.g., autocatalytic sets~\cite{blokhuis_universal_2020}) and production economics (e.g., Von Neumann growth models~\cite{de2012neumann}), our formulation generalizes these ideas to arbitrary multi-directed hypergraphs.

Enumerating all such substructures is computationally intractable. As shown by \citet{gagrani2023geometry}, even the number of inclusion-minimal self-amplifying subhypergraphs can grow exponentially with the number of nodes. To overcome this, we propose a quantitative measure inspired by \citet{neumann1945model} to evaluate the amplification capacity of a subhypergraph. Specifically, we define the \emph{amplification factor} of a subhypergraph, given a set of intensity weights on the hyperedges, as the minimum ratio—across all involved nodes—between the total output and the total input of each node under those intensities. We then define the \emph{maximal amplification factor} (MAF) as the highest such ratio attainable by optimizing the edge weights. While this index can be computed for any subhypergraph, we establish a tight connection between the MAF and the notion of self-amplification: we derive necessary and sufficient conditions under which a subhypergraph is self-amplifying based on its MAF. This provides a practical method for identifying self-reinforcing subsystems without exhaustive enumeration. The main distinction between the MAF and other hypergraph-based measures used across different fields is that the MAF quantifies a hypergraph's capacity to expand or grow over time. In other words, MAFs characterize the system's asymptotic behavior, specifically, its potential maximal growth.

To compute the MAF, we develop a nonlinear optimization model that falls into the class of generalized fractional programs~\citep{barros1996new,blanco2013minimizing,crouzeix1991algorithms}, which are closely related to the well-known generalized eigenvalue problem~\cite{nishioka2025minimization}. These problems are typically nonconvex and computationally demanding. To address this, we design an exact iterative algorithm with guaranteed convergence properties.

We then extend our analysis to a higher-level combinatorial problem: identifying, within a given hypergraph, the subhypergraph that maximizes the amplification factor. Solving this problem enables the discovery of the most influential substructures—those most capable of driving system-wide propagation or growth. We formulate this as a mixed-integer nonlinear programming (MINLP) problem, using binary decision variables to encode the selection of nodes and arcs. This model builds on the MAF computation framework, with additional constraints to manage the combinatorial complexity. Furthermore, we introduce application-driven structural constraints to guide the search toward interpretable and practically relevant substructures.

The remainder of this paper is organized as follows. In Section~\ref{sec:preliminaries}, we introduce the necessary notation and formally define the MAF in multi-directed hypergraphs. Section~\ref{sec:computing_MAF} presents the optimization framework for computing the MAF of a fixed subhypergraph. In Section~\ref{sec:optimal_subhypergraphs}, we extend the model to identify optimal subhypergraphs. In Section~\ref{sec:features} we add some features to our models that allow us to add specific constraints tailored to particular scenarios. Section~\ref{sec:experiments} reports computational results on synthetic datasets simulating chemical reaction networks, illustrating the effectiveness and limitations of our approach. Section~\ref{sec:case_study} presents real-world case studies involving \emph{Chemical Reaction Networks} (CRNs). Finally, Section~\ref{sec:conclusions} summarizes our contributions and outlines directions for future research.

\section{Preliminaries \label{sec:preliminaries}}

In this section, we introduce the notation and core definitions used throughout the paper. The section is organized in two parts. First, apart from providing the main definitions required for this work, we introduce the notion of self-amplifying subhypergraph. In the second part, we define the amplification factor and analyze subhypergraphs that maximize this metric and its relation with self-amplifying hypergraphs.

\subsection{Self-amplifying subhypergraphs}

The primary mathematical object underlying our work is the directed hypergraph, which generalizes the concept of (di)graphs by allowing hyperedges to connect multiple nodes in a directed fashion. Unlike standard graphs, where edges link pairs of nodes, directed hypergraphs enable richer modeling of complex relationships by connecting sets of nodes to other sets of nodes. While various definitions of directed hypergraphs exist in the literature, we adopt the most commonly used formulation.

\begin{definition} A directed hypergraph is a tuple $\mathcal{H} = (\mathcal{N}, \mathcal{A})$, where $\mathcal{N}$ is the set of nodes and $\mathcal{A}$ is the set of hyperarcs, where each hyperarc $a \in \mathcal{A}$ is represented as an ordered pair $a = (S_a, T_a)$, with $\emptyset \neq S_a, T_a \subseteq \mathcal{N}$, the \textbf{source} and \textbf{target} sets of hyperarc $a$, respectively. 
\end{definition}

Observe that when both the source and target sets of every hyperarc contain exactly one node (i.e., $|S_a| = |T_a| = 1$ for all $a \in \mathcal{A}$), the structure reduces to a standard directed graph. However, directed hypergraphs offer a more flexible modeling framework, particularly suitable for representing systems involving higher-order dependencies that cannot be captured by digraphs alone. A prominent example arises in production planning, where technologies (represented by hyperarcs) may require multiple input materials and generate multiple output products, often with shared inputs or outputs across technologies. In this context, $S_a$ denotes the set of input nodes, and $T_a$ the set of output nodes associated with hyperarc $a$.

A further generalization is the multihypergraph, which allows multiple hyperarcs to exist between the same source and target sets. This multiplicity enables modeling scenarios where several instances of the same transformation occur, or where a hyperarc consumes or produces multiple units of a given product. Such modeling is essential in input-output production systems, where technologies may consume or generate multiple units of specific goods. While multihypergraphs can theoretically be simplified into standard hypergraphs, this often results in the loss of valuable structural information—information that can be crucial for analysis and decision-making~\citep{klamt2009hypergraphs,battiston2021physics,bick2023higher}.

Next, we define the input and output incidence matrices of a hypergraph:

\begin{definition}
Let $\mathbb{S}$ (respectively, $\mathbb{T}$) denote the \textbf{input} (respectively, \textbf{output}) incidence matrix of the hypergraph $\mathcal{H} = (\mathcal{N}, \mathcal{A})$. For a hyperarc $a \in \mathcal{A}$ and a node $v \in {S}_a$ (respectively, $v \in {T}_a$), the entry $\mathbb{S}_{va}$ (respectively, $\mathbb{T}_{va}$) corresponds to the multiplicity of $v$ in the multiset ${S}_a$ (respectively, ${T}_a$), that is, $\mathbb{S}_{va} = |v|$ in ${S}_a$ (respectively, $\mathbb{T}_{va} = |v|$ in ${T}_a$).
\end{definition}

Figure \ref{fig:bipartite} illustrates a simple multi-directed hypergraph represented as a bipartite graph, where circular nodes correspond to hypergraph nodes $\mathcal{N} = \{A, B, C\}$ and square nodes represent hyperarcs $a_1 = (\{C\}, \{A, A, A\})$, $a_2 = (\{A, A, A\}, \{A, B\})$ and $a_3 = (\{A, B\}, \{C\})$. The corresponding input and output incidence matrices are:

$$
\mathbb{S} = \begin{pmatrix}
    3 & 1 & 0\\
    0 & 1 & 0\\
    0 & 0 & 1
\end{pmatrix}, \quad \mathbb{T} = \begin{pmatrix}
    1 & 0 & 3\\
    1 & 0 & 0\\
    0 & 1 & 0
\end{pmatrix}.
$$

\begin{figure}[h!] 
\begin{center}
\includegraphics[trim={1.5cm 1cm 1cm 0.9cm},clip,width=0.8\textwidth]{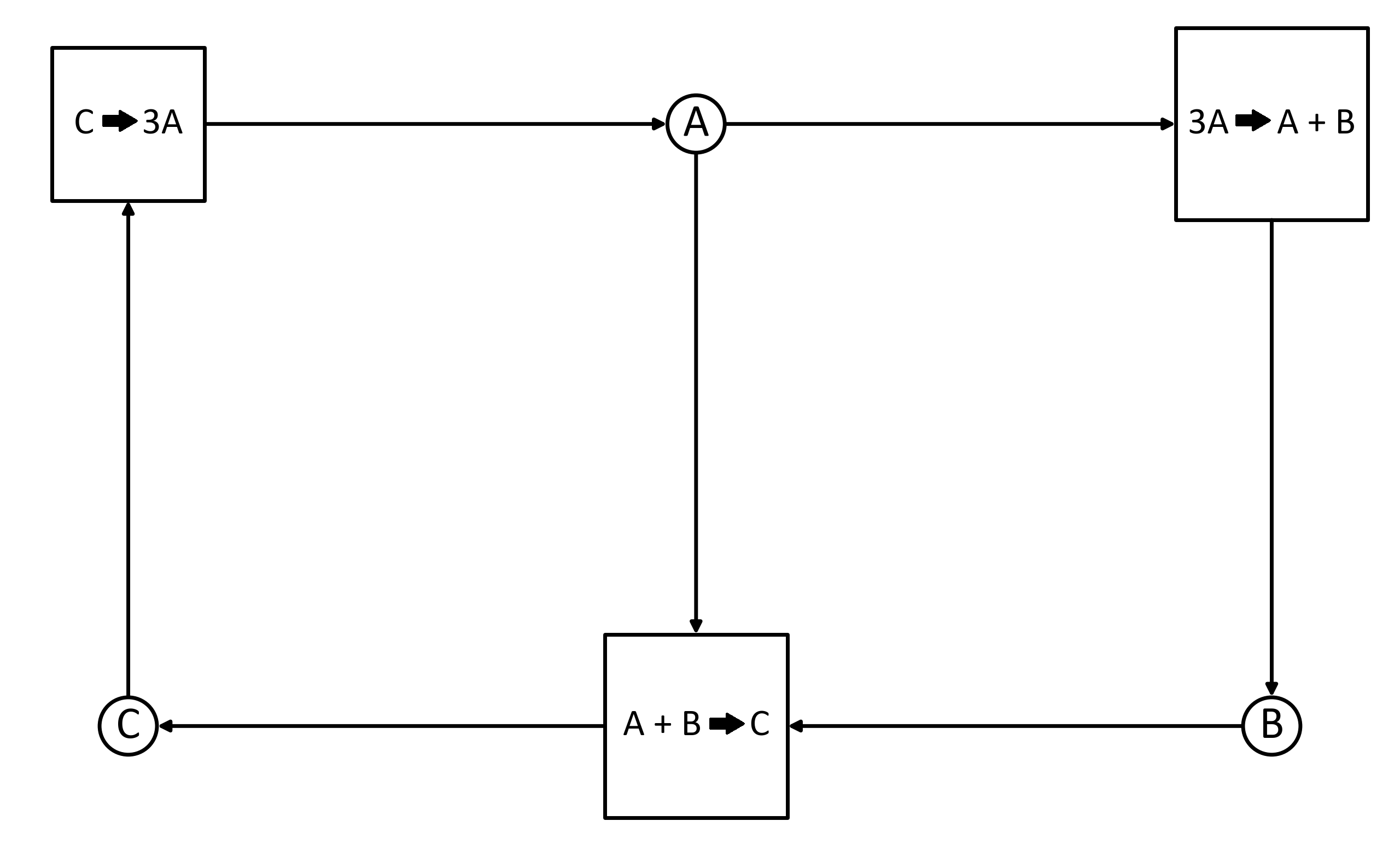}
\end{center}
\caption{ A hypergraph represented as a bipartite graph.\label{fig:bipartite}}
\end{figure}

\begin{definition}
The \textbf{net incidence matrix} of the hypergraph is defined as the difference between the output and input matrices: $\mathbb{Q} = \mathbb{T} - \mathbb{S}$.
\end{definition}

In the example above, the net matrix is:
$$
\mathbb{Q} = \begin{pmatrix}
    -2 & -1 & 3\\
    1 & -1 & 0\\
    0 & 1 & -1
\end{pmatrix}.
$$
Negative entries in $\mathbb{Q}$ indicate nodes that act as sources, while positive entries denote nodes that act as targets. If a node appears in both the source and target of a hyperarc with the same multiplicity, the corresponding entry in $\mathbb{Q}$ is zero, reflecting a net-neutral contribution. More on visualization and the graph-theoretical interpretation of such hypergraphs can be found in~\citep{kraakman2024configuration}.

Each nonzero nonnegative vector $\x \in \mathbb{R}_+^{\mathcal{A}} \setminus \{\mathbf{0}\}$ represents an \emph{intensity vector} that linearly combines the hyperarcs. The vectors $\mathbb{S} \x$ and $\mathbb{T} \x$ represent the input and output incidence of nodes under this combination. The net incidence is given by:
$\mathbb{Q} \x = (\mathbb{T} - \mathbb{S}) \x$,
which indicates the overall balance for each node. A negative value for a node indicates it is being depleted (a \emph{drain node}), while a positive value indicates it is being produced more than consumed (an \emph{amplifying node}).

In the example from Figure~\ref{fig:bipartite}, for the intensity vector $\x = (1, 2, 1)$, we obtain:
$\mathbb{Q} \x = (3, -1, 1)^\top.$
Thus, node $B$ acts as a drain, while $A$ and $C$ are amplifying.

\begin{definition}
A hypergraph $\mathcal{H}' = (\mathcal{N}', \mathcal{A}')$ is called a \emph{subhypergraph} of $\mathcal{H} = (\mathcal{N}, \mathcal{A})$, denoted by $\mathcal{H}' \subseteq \mathcal{H}$, if $\mathcal{A}' \subseteq \mathcal{A}$ and all nodes appearing in the input or output of hyperarcs in $\mathcal{A}'$ belong to $\mathcal{N}'$, i.e., for every $a = (\mathcal{S}_a, \mathcal{T}_a) \in \mathcal{A}'$, we have $\mathcal{S}_a, \mathcal{T}_a \subseteq \mathcal{N}'$.
\end{definition}

Given a subhypergraph $\mathcal{H}' \subseteq \mathcal{H}$ and a subset $\mathcal{M} \subseteq \mathcal{N}'$, the subhypergraph of $\mathcal{H}'$ induced by the nodes in $\mathcal{M}$ is called the \emph{$\mathcal{M}$-restricted subhypergraph} and is denoted by $\mathcal{H}'|_{\mathcal{M}}$. A central concept in this work is that of a self-amplifying subhypergraph, defined below.

\begin{definition}[Self-Amplifying Subhypergraph]\label{def:selfamplifying}
Let $\mathcal{H}$ be a hypergraph and $\mathcal{H}' = (\mathcal{N}', \mathcal{A}') \subseteq \mathcal{H}$. We say that $\mathcal{H}'$ is \textbf{self-amplifying} with respect to a node set $\mathcal{M} \subseteq \mathcal{N}'$ if the following conditions are satisfied:

\begin{enumerate}
\item \textbf{Self-sufficiency:} Each hyperarc in $\mathcal{A}'$ must have at least one input and one output node in $\mathcal{M}$, and every node in $\mathcal{M}$ must appear at least once in the source and once in the target of some hyperarc in $\mathcal{A}'$:
\begin{align}
&\forall a \in \mathcal{A}',\ \exists v, v' \in \mathcal{M}: \quad \mathbb{T}_{va} > 0,\quad \mathbb{S}_{v'a} > 0,\tag{$A^a$} \label{autonomous1}\\
&\forall v \in \mathcal{M},\ \exists a, a' \in \mathcal{A}': \quad \mathbb{T}_{va} > 0,\quad \mathbb{S}_{va'} > 0.\tag{$A^v$} \label{autonomous2}
\end{align}

\item \textbf{Net-positive realizability:} There exists a strictly positive intensity vector $\x \in \mathbb{R}_{>0}^{\mathcal{A}'}$ such that the net incidence for every node in $\mathcal{M}$ is strictly positive:
\begin{align}
\sum_{a \in \mathcal{A}'} \mathbb{Q}_{va} x_a > 0,\quad \forall v \in \mathcal{M}.
\end{align}
\end{enumerate}
\end{definition}

These notions are closely related to autonomy and productivity concepts studied in the context of Chemical Reaction Networks~\citep{blokhuis_universal_2020,gagrani2023geometry}.

For a self-amplifying subhypergraph over $\mathcal{M}$, the set $\mathcal{M}$ is referred to as the set of \emph{self-amplifying nodes}. The $\mathcal{M}$-restricted subhypergraph is said to be \textbf{minimally self-amplifying} if it does not contain a proper self-amplifying subhypergraph. The matrix $\mathbb{Q}|_{\mathcal{M}, \cdot}$ associated with such a minimal subhypergraph is called an \emph{amplification core}.

The example from Figure \ref{fig:bipartite} is minimally self-amplifying since:
\begin{enumerate}
    \item It is a self-sufficient hypergraph, as every node appears in both the source and target sets of at least one hyperarc.
    \item It is also net-positive realizable, as there exists a intensity vector $\x = (0, 0, 1)^\top$ for which the $\mathbb{Q} \x = (3, 0, -1)^\top$ and $\sum_{a \in \mathcal{A}} \mathbb{Q}_{va}x_a = 2 > 0.$
    \item It does not contain any other self-amplifying subhypergraph.
\end{enumerate}

\subsection{Maximal Amplification Factor}

Multi-directed subhypergraphs can simulate dynamical systems that evolve over discrete time. Let $\mathcal{H}$ be a hypergraph, and let $\mathcal{H}' = (\mathcal{N}', \mathcal{A}')$ be a subhypergraph that evolves through a sequence of time-indexed intensity vectors $\x^t \geq 0$, for $t \in \mathbb{Z}_+$, applied to the hyperarcs in $\mathcal{H}'$.

Suppose that the nodes in a subset $\mathcal{M} \subseteq \mathcal{N}'$ are positively amplified over time. This means that, for every $v \in \mathcal{M}$ and every $a \in \mathcal{A}'$, we have:
\begin{align*}
    \mathbb{T}_{va} \x_a^t \geq \mathbb{S}_{va} \x_a^t.
\end{align*}
where here $\geq$ indicates the componentwise order in $\mathbb{R}^{|\mathcal{M}|}$.

Let $\mathbb{S}_{\mathcal{M}}$ and $\mathbb{T}_{\mathcal{M}}$ denote the submatrices of $\mathbb{S}$ and $\mathbb{T}$ (row-restricted to the nodes in $\mathcal{M}$). Considering that the output of nodes at time $t$ becomes the input at time $t+1$, the following inequality holds:
\begin{equation} \label{gf0}
    \mathbb{T}_{\mathcal{M}} \x^t \geq \mathbb{S}_{\mathcal{M}} \x^{t+1}.
\end{equation}

If we further assume that the intensity at time $t+1$ is a scalar multiple of the intensity at time $t$, i.e., $\x^{t+1} = \alpha \x^t$, thereby preserving the system's structure, we obtain:
\begin{align*}
    \mathbb{T}_{\mathcal{M}} \x^t \geq \alpha \mathbb{S}_{\mathcal{M}} \x^t.
\end{align*}
We then say that the subhypergraph $\mathcal{H}'$ with initial intensity $\x^0$ \emph{amplifies} if $\alpha \geq 1$, and \emph{contracts} if $\alpha < 1$.

The focus of this paper is the analysis of how to determine initial intensity vectors that induce maximal amplification within the system. This leads to the following key definition:

\begin{definition}[Amplification Factor] \label{def:MGF}
Let $\mathcal{H} = (\mathcal{N}, \mathcal{A})$ be a hypergraph, let $\mathcal{H}' = (\mathcal{N}', \mathcal{A}') \subseteq \mathcal{H}$ be a subhypergraph, and let $\emptyset \neq \mathcal{M} \subset \mathcal{N}'$. Given an intensity vector $\x \in \mathbb{R}_{>0}^{\mathcal{A}}$, the \emph{amplification factor} of $\mathcal{H}'$ with respect to $\x$ is defined as:
\begin{align*}
    \alpha(\mathcal{H}', \mathcal{M}; \x) = \inf_{v \in \mathcal{M}} \frac{\dsum_{a \in \mathcal{A}'} \mathbb{T}_{va} \x_a}{\dsum_{a \in \mathcal{A}'} \mathbb{S}_{va} \x_a}.
\end{align*}
This value represents the smallest amplification rate achieved by any node in $\mathcal{M}$ under the intensity vector $\x$. Since $\x \neq\mathbf{0}$, and $\mathbb{S}$ is a nonzero, nonnegative integer matrix, the infimum above can be replaced by a minimum.

The \textbf{Maximal Amplification Factor} (MAF) of $\mathcal{H}'$ over $\mathcal{M}$ is the maximum amplification factor attainable over all strictly positive flows $\x \in \mathbb{R}_{>0}^{\mathcal{A}}$:
\begin{align*}
    \alpha(\mathcal{H}', \mathcal{M}) = \max_{\x \in \mathbb{R}^{\mathcal{A}}_{>0}} \alpha(\mathcal{H}', \mathcal{M}; \x).
\end{align*}
\end{definition}

A key contribution of this paper is the development of computational strategies for analyzing expanding hypergraphs using this metric. The following result establishes a foundational link between the amplification factor and the concept of self-amplifying subhypergraphs (Definition \ref{def:selfamplifying}). In particular, we show that the amplification factor provides a certificate for identifying self-amplifying structures.

\begin{lemma} \label{lemma:1}
If $\mathcal{H}'$ is self-sufficient in $\mathcal{M}$, then $0 < \alpha(\mathcal{H}', \mathcal{M}) < \infty$.
\end{lemma}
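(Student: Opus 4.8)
The plan is to establish the two inequalities $\alpha(\mathcal{H}',\mathcal{M})>0$ and $\alpha(\mathcal{H}',\mathcal{M})<\infty$ separately, drawing on the two halves of the self-sufficiency hypothesis: the node condition $(A^v)$ drives the lower bound, while the hyperarc condition $(A^a)$ drives the upper bound. Throughout I would use that $\mathbb{S},\mathbb{T}$ are nonnegative integer matrices and that $\mathcal{M}$ is finite and nonempty, so that the inner minimum over $\mathcal{M}$ in the definition of $\alpha(\mathcal{H}',\mathcal{M};\x)$ is attained; note also that $(A^v)$ forces $\mathcal{A}'\neq\emptyset$.

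For the lower bound I would simply evaluate at a convenient strictly positive intensity, say $\x=\mathbf{1}$, and show $\alpha(\mathcal{H}',\mathcal{M};\mathbf{1})>0$, whence $\alpha(\mathcal{H}',\mathcal{M})\ge\alpha(\mathcal{H}',\mathcal{M};\mathbf{1})>0$. The point is that for each $v\in\mathcal{M}$, condition $(A^v)$ supplies a hyperarc $a$ with $\mathbb{T}_{va}\ge 1$ and a hyperarc $a'$ with $\mathbb{S}_{va'}\ge 1$; since every coordinate of $\mathbf{1}$ is positive, both $\sum_{a}\mathbb{T}_{va}$ and $\sum_{a}\mathbb{S}_{va}$ are strictly positive and finite, so each nodewise ratio is a well-defined positive real, and the minimum of finitely many such reals is a positive real. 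The same reasoning in fact shows that $\x\mapsto\alpha(\mathcal{H}',\mathcal{M};\x)$ is a well-defined positive function on the whole strictly positive orthant, which is what makes the MAF well posed.

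For the upper bound — the substantive step — the key observation is the elementary ``mediant'' inequality: if $q_i>0$ and $p_i\ge 0$ then $\min_i p_i/q_i\le(\sum_i p_i)/(\sum_i q_i)$, since $p_i\ge(\min_j p_j/q_j)\,q_i$ for each $i$ and one sums over $i$. Applying this with $p_v=\sum_{a\in\mathcal{A}'}\mathbb{T}_{va}\x_a$ and $q_v=\sum_{a\in\mathcal{A}'}\mathbb{S}_{va}\x_a$ for $v\in\mathcal{M}$, and then interchanging the order of summation, gives for every strictly positive $\x$
\begin{align*}
\alpha(\mathcal{H}',\mathcal{M};\x)\ \le\ \frac{\dsum_{a\in\mathcal{A}'}\Big(\dsum_{v\in\mathcal{M}}\mathbb{T}_{va}\Big)\x_a}{\dsum_{a\in\mathcal{A}'}\Big(\dsum_{v\in\mathcal{M}}\mathbb{S}_{va}\Big)\x_a}.
\end{align*}
Condition $(A^a)$ guarantees that for every $a\in\mathcal{A}'$ there is some $v'\in\mathcal{M}$ with $\mathbb{S}_{v'a}>0$, hence (by integrality) $\sum_{v\in\mathcal{M}}\mathbb{S}_{va}\ge 1$, while trivially $\sum_{v\in\mathcal{M}}\mathbb{T}_{va}\le C:=\max_{a\in\mathcal{A}'}\sum_{v\in\mathcal{M}}\mathbb{T}_{va}<\infty$, a constant depending only on $\mathcal{H}'$ and $\mathcal{M}$. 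Since $\x>0$, the displayed fraction is at most $C\big(\sum_{a}\x_a\big)\big/\big(\sum_{a}\x_a\big)=C$. Thus $\alpha(\mathcal{H}',\mathcal{M};\x)\le C$ for all strictly positive $\x$, and taking the supremum yields $\alpha(\mathcal{H}',\mathcal{M})\le C<\infty$.

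I expect the only genuine obstacle to be spotting the right reduction for the upper bound: replacing ``the minimum of the nodewise ratios'' by ``the ratio of the aggregated input/output over $\mathcal{M}$'' via the mediant inequality, and then recognizing that $(A^a)$ is precisely the hypothesis needed to bound the aggregated $\mathcal{M}$-input below by $\sum_{a}\x_a$ uniformly in $\x$; everything else is routine bookkeeping with nonnegative integer matrices. As a minor point, if one insists that $\alpha(\mathcal{H}',\mathcal{M})$ be an attained maximum rather than a supremum, I would note that $\x\mapsto\alpha(\mathcal{H}',\mathcal{M};\x)$ is positively homogeneous of degree zero, so the problem can be normalized to a bounded domain; in any case attainment is delivered constructively by the optimization model of Section~\ref{sec:computing_MAF}.
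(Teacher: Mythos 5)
Your proof is correct, and it takes a genuinely different route from the paper's. The paper establishes positivity by invoking Gale's Theorem 9.8 on linear economic (von Neumann growth) models, and then argues finiteness rather loosely by noting that the denominator $\sum_{a\in\mathcal{A}'}\mathbb{S}_{va}\x_a$ is positive for every intensity vector --- an observation that by itself does not bound the supremum over all $\x$ (indeed, the paper's own Lemma~\ref{lemma:autonomous} exhibits exactly the failure mode, $x_a\to\infty$ on hyperarcs with no $\mathcal{M}$-source, that makes $\alpha=\infty$ when $(A^a)$ fails). You instead give a fully self-contained elementary argument: the lower bound by evaluating at $\x=\mathbf{1}$ and using $(A^v)$, and the upper bound via the mediant inequality $\min_v p_v/q_v\le(\sum_v p_v)/(\sum_v q_v)$ followed by an interchange of summation, with $(A^a)$ supplying the uniform lower bound $\sum_{v\in\mathcal{M}}\mathbb{S}_{va}\ge 1$ on the aggregated input coefficients. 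Your approach buys an explicit finite bound $C=\max_{a\in\mathcal{A}'}\sum_{v\in\mathcal{M}}\mathbb{T}_{va}$ on the MAF, avoids an external citation, and makes transparent which half of self-sufficiency is responsible for which inequality; the paper's approach is shorter on the page and situates the result within the von Neumann growth-factor literature, at the cost of leaving the finiteness step essentially unjustified as written. Your closing remark about sup versus attained max is also a fair caveat that the paper glosses over.
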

\begin{proof}
Self-sufficiency of $\mathcal{H}'$ in $\mathcal{M}$ implies that each node in $\mathcal{M}$ is both produced and consumed by at least one hyperarc in $\mathcal{A}'$. Under these conditions, \citet[Theorem 9.8]{gale1989theory} guarantees the existence of a strictly positive amplification factor $\alpha$. Since every hyperarc consumes at least one node, the denominator $\sum_{a \in \mathcal{A}'} \mathbb{S}_{va} \x_a > 0$ for any intensity vector $\x$, ensuring that $\alpha(\mathcal{H}', \mathcal{M}) < \infty$. Similarly, because each node is an output of at least one hyperarc, the numerator $\sum_{a \in \mathcal{A}'} \mathbb{T}_{va} \x_a > 0$, guaranteeing $\alpha(\mathcal{H}', \mathcal{M}) > 0$.
\end{proof}

\begin{lemma} \label{lemma:2}
The subhypergraph $\mathcal{H}'$ is net-positive realizable in $\mathcal{M}$ if and only if $\alpha(\mathcal{H}', \mathcal{M}) > 1$.
\end{lemma}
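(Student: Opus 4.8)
The plan is to prove both implications by translating the definition of net-positive realizability and the definition of the MAF into statements about the existence of a strictly positive intensity vector, and then exploiting the homogeneity (degree-zero scaling invariance) of the amplification factor $\alpha(\mathcal{H}', \mathcal{M}; \x)$ together with a compactness argument to pass from a strict inequality at a point to a strict inequality for the maximum.

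For the forward direction, suppose $\mathcal{H}'$ is net-positive realizable in $\mathcal{M}$: there is $\x \in \mathbb{R}_{>0}^{\mathcal{A}'}$ with $\sum_{a} \mathbb{Q}_{va} x_a > 0$ for all $v \in \mathcal{M}$. First I would rewrite this as $\sum_a \mathbb{T}_{va} x_a > \sum_a \mathbb{S}_{va} x_a$ for each $v$, and since self-sufficiency (which I may invoke, as net-positive realizability is only meaningful together with the standing hypotheses — though strictly I should note that if $\mathcal{H}'$ is net-positive realizable then in particular every node of $\mathcal M$ is produced and consumed, so Lemma~\ref{lemma:1} applies and both sums are positive) the denominator $\sum_a \mathbb{S}_{va} x_a$ is strictly positive, this yields $\frac{\sum_a \mathbb{T}_{va} x_a}{\sum_a \mathbb{S}_{va} x_a} > 1$ for every $v \in \mathcal{M}$. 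Taking the minimum over the finite set $\mathcal{M}$ preserves the strict inequality, so $\alpha(\mathcal{H}', \mathcal{M}; \x) > 1$, and hence $\alpha(\mathcal{H}', \mathcal{M}) = \max_{\x} \alpha(\mathcal{H}', \mathcal{M}; \x) > 1$. If one wants to avoid assuming self-sufficiency a priori, I would instead argue directly: the condition $\sum_a \mathbb{Q}_{va} x_a > 0$ forces $\sum_a \mathbb{T}_{va} x_a > 0$ (so the node is produced, numerator positive) and forces $\sum_a \mathbb{S}_{va} x_a < \sum_a \mathbb{T}_{va} x_a$; and if the denominator were zero the node would never be consumed, but then the ratio is $+\infty > 1$ anyway, so in all cases the ratio exceeds $1$.

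For the reverse direction, suppose $\alpha(\mathcal{H}', \mathcal{M}) > 1$. By definition the maximum is attained, so there exists $\x^\ast \in \mathbb{R}_{>0}^{\mathcal{A}}$ (equivalently its restriction to $\mathcal{A}'$, which is strictly positive) with $\alpha(\mathcal{H}', \mathcal{M}; \x^\ast) > 1$. Unravelling the $\inf$/$\min$, this means $\frac{\sum_a \mathbb{T}_{va} x^\ast_a}{\sum_a \mathbb{S}_{va} x^\ast_a} > 1$ for every $v \in \mathcal{M}$, i.e. $\sum_a \mathbb{T}_{va} x^\ast_a > \sum_a \mathbb{S}_{va} x^\ast_a$, i.e. $\sum_a \mathbb{Q}_{va} x^\ast_a > 0$ for all $v \in \mathcal{M}$ — which is precisely net-positive realizability, with the witnessing vector $\x^\ast$ (restricted to $\mathcal{A}'$), which is strictly positive as required. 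Here I should be slightly careful that the definition of $\alpha$ uses flows on all of $\mathcal{A}$ but the sums run only over $a \in \mathcal{A}'$; since components of $\x$ outside $\mathcal{A}'$ do not affect either numerator or denominator, and since $\mathcal A' \subseteq \mathcal A$, restricting a strictly positive $\x \in \mathbb{R}_{>0}^{\mathcal A}$ to $\mathcal{A}'$ gives a strictly positive element of $\mathbb{R}_{>0}^{\mathcal A'}$, and conversely any strictly positive vector on $\mathcal{A}'$ extends (e.g. by ones) to a strictly positive vector on $\mathcal{A}$ with the same amplification factor.

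The argument is essentially a definitional unpacking, so there is no deep obstacle; the only point requiring care is the handling of the two ambient index sets $\mathcal{A}$ versus $\mathcal{A}'$ in the definitions of the amplification factor and of net-positive realizability, and making sure the ``$\max$ is attained'' claim from Definition~\ref{def:MGF} is legitimately invoked (it is, since $\mathcal H'$ is assumed self-sufficient throughout this part of the paper, so Lemma~\ref{lemma:1} gives finiteness and the cited result of Gale guarantees attainment). I would state the $\mathcal A$/$\mathcal A'$ reduction once as a short remark and then run both implications cleanly through the chain of equivalences $\sum_a \mathbb{Q}_{va} x_a > 0 \iff \sum_a \mathbb{T}_{va} x_a > \sum_a \mathbb{S}_{va} x_a \iff \frac{\sum_a \mathbb{T}_{va} x_a}{\sum_a \mathbb{S}_{va} x_a} > 1$, valid node-by-node because the denominator is strictly positive.
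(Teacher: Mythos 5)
Your proposal is correct and follows essentially the same route as the paper's proof: both directions are a direct definitional unpacking of net-positive realizability into the node-wise inequality $\sum_a \mathbb{T}_{va}x_a > \sum_a \mathbb{S}_{va}x_a$ and back, the paper's version being just a terser statement of the same chain. Your added care about the positivity of the denominators, the $\mathcal{A}$ versus $\mathcal{A}'$ index sets, and the attainment of the maximum fills in details the paper leaves implicit but does not change the argument.
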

\begin{proof}
The result follows directly from the definition. If $\mathcal{H}'$ is net-positive realizable in $\mathcal{M}$, then there exists an intensity vector $\x$ such that $\sum_{a \in \mathcal{A}'} \mathbb{T}_{va} \x_a > \sum_{a \in \mathcal{A}'} \mathbb{S}_{va} \x_a$ for all $v \in \mathcal{M}$, which implies $\alpha(\mathcal{H}', \mathcal{M}; \x) > 1$. Conversely, if $\alpha(\mathcal{H}', \mathcal{M}) > 1$, then such an $\x$ exists and the inequality is satisfied.
\end{proof}
\begin{lemma} \label{lemma:autonomous}
   If $0<\alpha(\mathcal{H}^{'},\mathcal{M}) < \infty$, then $\mathcal{H}^{'}$ is entity-self-sufficient.  
\end{lemma}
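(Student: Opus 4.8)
The plan is to argue by contraposition: supposing $\mathcal{H}'$ is \emph{not} entity-self-sufficient in $\mathcal{M}$, i.e.\ that condition~\eqref{autonomous2} of Definition~\ref{def:selfamplifying} fails, I would show that $\alpha(\mathcal{H}',\mathcal{M})$ is pinned to one of its extreme values, $0$ or $\infty$, contradicting $0<\alpha(\mathcal{H}',\mathcal{M})<\infty$. The negation of~\eqref{autonomous2} produces a node $v_0\in\mathcal{M}$ that either (i) appears in the target of no hyperarc of $\mathcal{A}'$ (so $\mathbb{T}_{v_0a}=0$ for all $a\in\mathcal{A}'$), or (ii) appears in the source of no hyperarc of $\mathcal{A}'$ (so $\mathbb{S}_{v_0a}=0$ for all $a\in\mathcal{A}'$); these two cases get handled separately.

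Case (i) is the easy, robust one, and in fact it can be run directly rather than by contraposition: at a flow $\x^{*}\in\mathbb{R}^{\mathcal{A}}_{>0}$ attaining the MAF, Definition~\ref{def:MGF} gives $\sum_{a\in\mathcal{A}'}\mathbb{T}_{va}x^{*}_a\ge\alpha(\mathcal{H}',\mathcal{M})\sum_{a\in\mathcal{A}'}\mathbb{S}_{va}x^{*}_a$ for every $v\in\mathcal{M}$; since $\alpha(\mathcal{H}',\mathcal{M})>0$ and $\x^{*}>0$, the right-hand side is strictly positive for every consumed node, so the numerator $\sum_{a\in\mathcal{A}'}\mathbb{T}_{va}x^{*}_a$ must itself be strictly positive, and because $\mathbb{T}\ge 0$ with $x^{*}_a>0$ this forces $\mathbb{T}_{va}>0$ for some $a\in\mathcal{A}'$ — i.e.\ every node of $\mathcal{M}$ is a target of some hyperarc. (The degenerate subcase where $v_0$ is also the source of no hyperarc turns its ratio into a $0/0$ expression; I would exclude it either as a standing non-isolation hypothesis on $\mathcal{M}$ or by observing it is itself incompatible with $\alpha(\mathcal{H}',\mathcal{M})$ being a well-defined number in $(0,\infty)$.)

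Case (ii) is where I expect essentially all the difficulty to sit. If $v_0$ is consumed by no hyperarc, then $\sum_{a\in\mathcal{A}'}\mathbb{S}_{v_0a}x_a=0$ for \emph{every} $\x\in\mathbb{R}^{\mathcal{A}}_{>0}$, so the amplification inequality $\mathbb{T}_{\mathcal{M}}\x\ge\alpha\,\mathbb{S}_{\mathcal{M}}\x$ places no upper bound on $\alpha$ through $v_0$; under the convention implicit in Definition~\ref{def:MGF} — a node with vanishing total input is trivially amplified, its ratio is $+\infty$ and the node is effectively dropped from the $\min$ — this makes $\alpha(\mathcal{H}',\mathcal{M})$ unbounded, i.e.\ $\alpha(\mathcal{H}',\mathcal{M})=\infty$, contradicting finiteness. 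Making this airtight forces one to spell out exactly how vanishing denominators are interpreted (the remark that "the infimum can be replaced by a minimum'' already tacitly assumes the rows of $\mathbb{S}_{\mathcal{M}}$ are nonzero). The cleaner route, which I would probably take, is to invoke the same reference used in the proof of Lemma~\ref{lemma:1}, \citet[Theorem~9.8]{gale1989theory}: that result characterizes precisely when the generalized growth factor of the pair $(\mathbb{S}_{\mathcal{M}},\mathbb{T}_{\mathcal{M}})$ is finite and strictly positive, and its hypotheses are exactly that every node of $\mathcal{M}$ occurs in some source and in some target — i.e.\ condition~\eqref{autonomous2}. Either way, the bookkeeping around the never-consumed node is the only step that genuinely needs care.
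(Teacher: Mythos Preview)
Your case~(i) is correct and matches how the paper opens: at an optimal $\x^{*}>0$ with $\alpha(\mathcal{H}',\mathcal{M})>0$, the inequality $\sum_{a}\mathbb{T}_{va}x^{*}_a\ge\alpha\sum_{a}\mathbb{S}_{va}x^{*}_a$ forces the numerator to be positive for every consumed $v\in\mathcal{M}$, hence $\mathbb{T}_{va}>0$ for some $a\in\mathcal{A}'$.

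Case~(ii) contains a genuine gap. The step ``its ratio is $+\infty$ and the node is effectively dropped from the $\min$ --- this makes $\alpha(\mathcal{H}',\mathcal{M})$ unbounded'' is a non sequitur: once $v_0$ is dropped, the minimum is still taken over the \emph{remaining} nodes of $\mathcal{M}$, and those can pin $\alpha$ to a finite value. Concretely, take $\mathcal{M}=\{v_1,v_2\}$ and a single hyperarc $v_1\to v_1+v_2$; then $v_2$ is never consumed, its ratio is $+\infty$, yet $\alpha(\x)=1$ for every $\x>0$. So under the pure ratio definition the conclusion you want actually fails without a further convention. The usable lever is the LP formulation~\eqref{MGF}: the constraint $\mathbb{S}_{\mathcal{M}}\x\ge\mathbf{1}$ is infeasible whenever a row of $\mathbb{S}_{\mathcal{M}}$ vanishes, so $\alpha$ is then not a number in $(0,\infty)$. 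You hint at this (``the remark that the infimum can be replaced by a minimum already tacitly assumes the rows of $\mathbb{S}_{\mathcal{M}}$ are nonzero'') but you should make it the argument rather than a parenthetical. The appeal to \citet[Theorem~9.8]{gale1989theory} does not help either: that result supplies the forward implication of Lemma~\ref{lemma:1}, not the converse you need here.

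For comparison, the paper's own proof is no more careful on exactly this point --- it simply asserts $\sum_{a}\mathbb{S}_{va}x_a>0$ for each $v$ without justification. What the paper does differently is that it then devotes its remaining two paragraphs to the \emph{hyperarc} condition~\eqref{autonomous1}: arguing that hyperarcs producing no $\mathcal{M}$-node can be set to zero, and that hyperarcs consuming no $\mathcal{M}$-node would push $\alpha$ to $+\infty$. Your contraposition stays entirely within the node condition~\eqref{autonomous2}; since the lemma literally claims only ``entity-self-sufficient'', your narrower scope is defensible, but be aware that the paper's argument is structurally quite different from yours.
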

\begin{proof}
Since $\mathbb{S}$ is nonnegative, and for each $v \in \mathcal{M}$ we have that $\dsum_{a \in \mathcal{A}^{'}} \mathbb{T}_{va} \x_a > 0$ and $\dsum_{a \in \mathcal{A}^{'}} \mathbb{S}_{va} \x_a > 0$, there must exist at least $a, a'\in \mathcal{A}^{'}$ such that $\mathbb{S}_{va}>0$ and $\mathbb{T}_{va^{'}}>0$ (entity self-sufficiency).

On the other hand, for all $a \in \mathcal{A}^{'}$ such that $\mathbb{T}_{va} = 0$ for all $v \in \M$, due to the maximization criterion, the optimal value would always be obtained when $x_a = 0$ for these hyperarcs (which is equivalent to excluding them from the subnetwork).

Let $Q = \{a \in \mathcal{A}^{'}: \mathcal{S}_{va}=0, \forall v \in \mathcal{M} \}$, then the expression for the amplification factor for a given $\x$ is:
\begin{equation*}
\inf_{v \in \M} \frac{\dsum_{a' \in \mathcal{A}^{'}} \mathbb{T}_{va^{'}} \x_{a^{'}}}{\dsum_{a' \in \mathcal{A}^{'}} \mathbb{S}_{va^{'}} \x_{a^{'}}} = \inf_{v \in \M} \left(\frac{\dsum_{a' \in \mathcal{A}^{'}\backslash Q} \mathbb{T}_{va^{'}} \x_{a^{'}}}{\dsum_{a' \in \mathcal{A}^{'}\backslash Q} \mathbb{S}_{va^{'}} \x_{a^{'}}} +  \frac{\dsum_{a\in Q}\mathbb{T}_{va} \x_{a}}{\dsum_{a^{'} \in \mathcal{A}^{'}\backslash Q} \mathbb{S}_{va^{'}} \x_{a^{'}}}\right).
\end{equation*}
Since the term $\dsum_{a \in Q}\mathbb{T}_{va} \x_{a} \geq 0$ only appears in the numerator of the expression, when maximizing over $\x$, the solution is reached for values $x_{a}=\infty$ for $a \in Q$, leading to $\alpha(\mathcal{H}^{'},\mathcal{M}) = \infty$, contradicting the assumption that the amplification factor is finite.
\end{proof}

Combining the above results, we arrive at the central connection between the concepts introduced in this work.

\begin{theorem} \label{th:1}
Let $\mathcal{H} = (\mathcal{N}, \mathcal{A})$ be a hypergraph, $\mathcal{H}' = (\mathcal{N}', \mathcal{A}') \subseteq \mathcal{H}$ a subhypergraph, and $\emptyset \neq \mathcal{M} \subset \mathcal{N}'$. Then:

\begin{enumerate}
    \item If $\mathcal{H}'$ is self-amplifying over $\mathcal{M}$, then $1 < \alpha(\mathcal{H}', \mathcal{M}) < \infty$.
    \item Conversely, if $\mathcal{H}'$ is self-sufficient (i.e., $0 < \alpha(\mathcal{H}', \mathcal{M}) < \infty$) and net-positive realizable (i.e., $\alpha(\mathcal{H}', \mathcal{M}) > 1$), then $\mathcal{H}'$ is self-amplifying in $\mathcal{M}$.
\end{enumerate}
\end{theorem}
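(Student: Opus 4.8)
The plan is to obtain both implications as a synthesis of Lemmas~\ref{lemma:1},~\ref{lemma:2} and~\ref{lemma:autonomous} with Definition~\ref{def:selfamplifying}; no new inequality is needed, the argument being a translation between the combinatorial conditions that define a self-amplifying subhypergraph and the position of $\alpha(\mathcal{H}',\mathcal{M})$ within $(0,\infty)$ relative to $1$.

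For part~(1) I would unwind Definition~\ref{def:selfamplifying}: a self-amplifying $\mathcal{H}'$ over $\mathcal{M}$ is by definition both self-sufficient over $\mathcal{M}$ and net-positive realizable over $\mathcal{M}$. Lemma~\ref{lemma:1} converts self-sufficiency into $0<\alpha(\mathcal{H}',\mathcal{M})<\infty$, Lemma~\ref{lemma:2} converts net-positive realizability into $\alpha(\mathcal{H}',\mathcal{M})>1$, and intersecting these two bounds yields $1<\alpha(\mathcal{H}',\mathcal{M})<\infty$. For part~(2), Lemma~\ref{lemma:2} sends the hypothesis $\alpha(\mathcal{H}',\mathcal{M})>1$ back to net-positive realizability, so---reading the parenthetical $0<\alpha(\mathcal{H}',\mathcal{M})<\infty$ as the operative content of the self-sufficiency hypothesis---the remaining task is to recover the self-sufficiency conditions~\eqref{autonomous1}--\eqref{autonomous2} from $0<\alpha(\mathcal{H}',\mathcal{M})<\infty$. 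Condition~\eqref{autonomous2} is precisely the entity-self-sufficiency asserted by Lemma~\ref{lemma:autonomous}. Condition~\eqref{autonomous1} is not the verbatim conclusion of that lemma, but its proof already supplies the two facts that give it: a hyperarc $a$ with $\mathbb{T}_{va}=0$ for all $v\in\mathcal{M}$ contributes only to denominators, so its optimal intensity is $0$, which is incompatible with the maximum of Definition~\ref{def:MGF} being attained at a strictly positive intensity vector; and, once every hyperarc has a target node in $\mathcal{M}$, a hyperarc $a$ with $\mathbb{S}_{va}=0$ for all $v\in\mathcal{M}$ contributes only to numerators, so letting its intensity tend to $+\infty$ drives $\alpha(\mathcal{H}',\mathcal{M})$ to $+\infty$, against finiteness. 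Hence every hyperarc of $\mathcal{A}'$ meets $\mathcal{M}$ both in its source and in its target, so~\eqref{autonomous1} holds; together with~\eqref{autonomous2} and net-positive realizability, $\mathcal{H}'$ is then self-amplifying over $\mathcal{M}$.

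The step I expect to demand the most care is this recovery of~\eqref{autonomous1} in part~(2). It hinges on reading the maximum over $\x\in\mathbb{R}_{>0}^{\mathcal{A}}$ in Definition~\ref{def:MGF} as genuinely attained at an interior point, and a hyperarc disjoint from $\mathcal{M}$ in both its source and its target is invisible to $\alpha(\mathcal{H}',\mathcal{M})$ and therefore cannot be excluded by any bound on $\alpha$. I would defuse both points by first replacing $\mathcal{H}'$ with the subhypergraph obtained by deleting every hyperarc that touches no node of $\mathcal{M}$---an operation that alters neither $\alpha(\cdot,\mathcal{M})$ nor, for the reduced object, the self-amplification status over $\mathcal{M}$, so that one may assume from the start that each hyperarc of $\mathcal{A}'$ meets $\mathcal{M}$---and by observing that self-sufficiency, via Lemma~\ref{lemma:1}, keeps every denominator bounded away from $0$ once the intensity vectors are normalized, so the scale-invariant map $\x\mapsto\alpha(\mathcal{H}',\mathcal{M};\x)$ attains its supremum and the two optimality arguments above are legitimate. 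Everything else is a direct appeal to Lemmas~\ref{lemma:1}--\ref{lemma:autonomous}, so I do not foresee further obstacles.
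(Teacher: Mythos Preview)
Your approach is correct and matches the paper's: the paper gives no proof beyond the sentence ``Combining the above results, we arrive at the central connection\ldots'', treating Theorem~\ref{th:1} as an immediate synthesis of Lemmas~\ref{lemma:1}--\ref{lemma:autonomous} with Definition~\ref{def:selfamplifying}, which is exactly your plan. Your additional work on recovering condition~\eqref{autonomous1} in part~(2) goes beyond anything the paper writes and is only needed under your chosen reading of the parenthetical numerical bounds as the operative hypotheses; if instead one reads ``self-sufficient'' itself as the hypothesis, part~(2) is literally Definition~\ref{def:selfamplifying} (self-amplifying $=$ self-sufficient $+$ net-positive realizable), and the paper's loose ``i.e.'' is just a reminder of Lemma~\ref{lemma:1} rather than a claimed equivalence.
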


\section{Computing the Maximal Amplification Factor} \label{sec:computing_MAF}

In this section, we describe an algorithmic approach to exactly compute the MAF of a given subhypergraph. 

Note that this rate cannot be directly computed, as it depends on identifying the intensity vector that achieves the maximum amplification factor—a nontrivial task that requires solving an optimization problem.
Here, we provide an optimization-based framework for its computation.

The MAF of a subnetwork $\mathcal{H}^{'}$ can be formulated as the following mathematical optimization problem:
\begin{align}
    \alpha(\mathcal{H}^{'},\mathcal{M}): = \max & \;\alpha\nonumber\\
    \mbox{s.t. } & \mathbb{T}_\M\x \geq \alpha \mathbb{S}_\M \x,\tag{MAF}\label{MGF}\\
    & \mathbb{S}_\M \x \geq 1, \nonumber\\  
    & \x \in \R^{\mathcal{A}^{'}}_+, \alpha \geq 0.\nonumber
\end{align}

In the above formulation $\alpha$ represents the MAF, and $\x$ indicates a intensity vector. The objective function is the amplification factor, which is adequately defined by the two sets of constraints. Observe that the constraints $\mathbb{T}_\mathcal{M} \geq \alpha \mathbb{S}_\mathcal{M} \x$ enforce $\alpha$ to be properly defined. The constraint $\mathbb{S}_\mathcal{M} \x \geq 1$, together with $\x \in \mathbb{R}^{|\mathcal{A}'|}_+$ is equivalent to assuring that $\x \neq \mathbf{0}$.

Note that the problem above can be interpreted as selecting intensities that maximize the worst-case (i.e., minimum) amplification factor among the nodes of the subhypergraph.

The problem \eqref{MGF} is neither convex nor concave, and thus, standard convex optimization tools are not directly applicable. However, the MAF can also be formulated as a generalized fractional programming problem, which is computationally challenging (e.g., \cite{barros1996new, blanco2013minimizing, crouzeix1991algorithms}). To solve \eqref{MGF}, we adapt a Dinkelbach-type procedure (e.g., \cite{crouzeix1991algorithms}), which iteratively computes $\alpha(\mathcal{H}^{'}, \mathcal{M})$ and guarantees convergence to the actual value.

In what follows, we analyze the computation of the MAF for a given sub-hypergraph, $\mathcal{H}^{'}=(\mathcal{N}^{'}, \mathcal{A}^{'})$, and a given subset $\mathcal{M} \subseteq \mathcal{N}^{'}$. A specific instance of this occurs when $\mathcal{N}^{'} = \mathcal{M}$, meaning all nodes in the sub-hypergraph are amplification factor-inducing nodes.

The algorithm that we propose for computing the MAF is based on solving exactly the mathematical optimization model in \eqref{MGF} by applying an ad-hoc Dinkelbach-type procedure. Specifically, we begin by initializing the intensity vector to $\mathbf{x}_0$. For each intensity vector obtained during the procedure, we compute the amplification factor induced by that intensity, denoted as $\alpha_{\text{it}}$. Next, we solve the following auxiliary linear program to compute the optimal values $\rho$ and $\mathbf{x}_{\text{it}}$:
\begin{align*}
(\rho^*, \bar \x_{it}) \in \arg\max & \,\,\rho\\
\mbox{s.t.} &\,\, \rho \leq  \mathbb{T}_\M \x - \alpha_{\it} \mathbb{S}_\M \x, &&\\
& \mathbb{S}_\M \x \geq \mathbf{1}, &&\\
& \x \in \R_+^{|\mathcal{A}^{'}|}, \rho \in \R_+.
\end{align*}
In case $\rho^* = 0$, the obtained solution, $(\alpha_{\texttt{it}}, \x)$ is optimal, and the amplification factor for this intensity vector, $\alpha_{\text{it}}$, corresponds to the MAF. Otherwise, the intensity vector is updated based on the solution of the linear program, $\bar \x_{\texttt{it}}$, and the process is repeated. 

\begin{algorithm}[h]
\caption{Computation of the amplification factor of subnetwork $\mathcal{H}'$ with a given set of self-amplifying entities $\M$.\label{alg:1}}
\SetKwInOut{Input}{input}\SetKwInOut{Output}{output}

\Input{$\mathcal{H}^{'} = (\mathcal{N}^{'}, \mathcal{A}^{'})$, $\mathcal{M} \subset \mathcal{N}^{'}$, $\it =0$, and $\x_0 \in \R_+^{|\mathcal{A}^{'}|}$.}
\Output{The amplifying factor $\alpha(\mathcal{H}^{'},\mathcal{M})$}
Stop=False \\
\While{Stop=False}{
Define $\alpha^t_{\it} =  \alpha(\x_{it})$ and 
solve the linear program:
\begin{align*}
(\rho, \bar \x_{it}) \in \arg\max & \,\,\rho\\
\mbox{s.t.} &\,\, \rho \leq  \mathbb{T}_\M \x - \alpha_{\it} \mathbb{S}_\M \x, &&\\
& \mathbb{S}_\M \x \geq \mathbf{1}, &&\\
& \x \in \R_+^{|\mathcal{A}^{'}|}, \rho \in \R_+.
\end{align*}
\eIf{$\rho=0$}{
Stop=True, \\
$\alpha(\mathcal{H}^{'},\mathcal{M}) = \alpha_\it$}{
$\x_{\it+1} = \bar \x_{\it}$,\\
$\it=\it+1$}}
\end{algorithm}

A pseudocode for our proposed algorithm is detailed in Algorithm \ref{alg:1}.

 The convergence of this procedure described in Algorithm \ref{alg:1} is guaranteed by \cite{crouzeix1985algorithm}.
\begin{theorem}
    If the feasible intensity vectors $\x$ are upper bounded, Algorithm \ref{alg:1} converges linearly (in terms of the number of iterations) to the optimal solution.
\end{theorem}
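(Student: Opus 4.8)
The plan is to recognize Algorithm~\ref{alg:1} as an instance of the classical Dinkelbach / Crouzeix--Ferland--Nguyen scheme for generalized fractional programming and to invoke the convergence theory of \cite{crouzeix1985algorithm,crouzeix1991algorithms}, after verifying that the hypotheses of that theory hold for \eqref{MGF}. Concretely, set
\begin{align*}
F(\x) = \min_{v \in \mathcal{M}} \frac{(\mathbb{T}_\mathcal{M}\x)_v}{(\mathbb{S}_\mathcal{M}\x)_v}, \qquad \Phi(\alpha) = \max_{\x \in X}\ \min_{v\in\mathcal{M}}\Bigl[ (\mathbb{T}_\mathcal{M}\x)_v - \alpha\,(\mathbb{S}_\mathcal{M}\x)_v \Bigr],
\end{align*}
where $X = \{\x \in \R_+^{\mathcal{A}'} : \mathbb{S}_\mathcal{M}\x \geq \mathbf{1}\}$, which by hypothesis we intersect with an upper bound so that $X$ is a nonempty compact polytope. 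First I would record the standard facts: $\Phi$ is finite, continuous, strictly decreasing, and convex on $\R_+$ (as a max of affine functions of $\alpha$), that $\Phi(\alpha) = 0$ has a unique root, and that this root equals $\alpha(\mathcal{H}',\mathcal{M})$ (this is exactly the content of \eqref{MGF}; the nonemptiness/compactness uses Lemma~\ref{lemma:1} to locate the root in a bounded interval $(0,\infty)$). The linear program solved at each iteration is precisely the evaluation of $\Phi(\alpha_{\it})$ together with a maximizer $\bar\x_{\it}$, and the update $\alpha_{\it+1} = F(\bar\x_{\it})$ is the Dinkelbach step; the stopping test $\rho^* = 0$ is the statement $\Phi(\alpha_{\it}) = 0$.

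The key steps, in order, are: (i) show the iterates satisfy $\alpha_{\it} < \alpha_{\it+1} \le \alpha(\mathcal{H}',\mathcal{M})$ as long as $\rho^* > 0$ — monotone increase follows because $\Phi(\alpha_{\it}) = \min_v[(\mathbb{T}_\mathcal{M}\bar\x_{\it})_v - \alpha_{\it}(\mathbb{S}_\mathcal{M}\bar\x_{\it})_v] > 0$ forces $F(\bar\x_{\it}) > \alpha_{\it}$, and the upper bound follows since $F(\bar\x_{\it}) \le \alpha(\mathcal{H}',\mathcal{M})$ by definition of the MAF as a maximum; (ii) conclude convergence of the monotone bounded sequence to some $\alpha^\infty \le \alpha(\mathcal{H}',\mathcal{M})$, and use compactness of $X$ plus continuity of $\Phi$ and $F$ to pass to the limit and identify $\Phi(\alpha^\infty) = 0$, hence $\alpha^\infty = \alpha(\mathcal{H}',\mathcal{M})$; (iii) upgrade to a linear (geometric) rate: this is where the convexity of $\Phi$ and the fact that its root is attained at a vertex regime of the underlying LP enter. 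The rate estimate follows the Crouzeix--Ferland--Nguyen argument: writing $e_{\it} = \alpha(\mathcal{H}',\mathcal{M}) - \alpha_{\it} \ge 0$, one shows $\Phi(\alpha_{\it}) \le C\, e_{\it}$ from an upper-bounding affine function of $\Phi$ at the root, and $\Phi(\alpha_{\it}) \ge c\, e_{\it}$ for a constant $c>0$ coming from a lower supporting line, using that $\Phi$ is piecewise-affine on the compact set $X$ with finitely many linear pieces and has strictly negative slope bounded away from $0$ near the root; combining $\alpha_{\it+1} - \alpha_{\it} = \Phi(\alpha_{\it})/\overline{s}_{\it}$ for an appropriate secant slope $\overline{s}_{\it}$ gives $e_{\it+1} \le (1 - \kappa)\, e_{\it}$ for some $\kappa \in (0,1)$.

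I would phrase steps (i)--(ii) in full, since they are short and self-contained, and for step (iii) cite \cite{crouzeix1985algorithm} (and \cite{crouzeix1991algorithms,barros1996new}) for the geometric rate, noting that the only structural input their theorem requires — $\Phi$ convex, continuous, with a bounded feasible region guaranteeing a finite root, and the parametric subproblem a linear program with a compact feasible set — is met here precisely because of the boundedness assumption on $\x$ together with Lemma~\ref{lemma:1}. The main obstacle is the rate claim in step (iii): monotone convergence is essentially automatic, but pinning down the \emph{linear} rate requires the quantitative two-sided bound $c\,e_{\it} \le \Phi(\alpha_{\it}) \le C\, e_{\it}$, and in particular the lower bound $c > 0$, which is exactly the place where one must use that the feasible set is a compact polytope (so $\Phi$ is piecewise affine with finitely many slopes, all negative and bounded away from $0$ on a neighborhood of the root) rather than merely compact and convex — hence the essential role of the boundedness hypothesis in the statement. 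If one wanted to avoid the polyhedral argument, an alternative is to verify the abstract regularity condition in \cite[Theorem~something]{crouzeix1991algorithms} directly; but the cleanest route is to lean on \cite{crouzeix1985algorithm}, whose setting matches ours verbatim once the feasible region is bounded.
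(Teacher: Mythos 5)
Your proposal takes essentially the same route as the paper: the paper's entire proof consists of citing \cite{crouzeix1985algorithm} and verifying its hypotheses, namely that the feasible regions are compact (which is where the boundedness assumption on $\x$ enters) and that the functions in the ratios are linear, hence continuous. You do exactly this, and additionally unpack the monotonicity and convergence arguments that the citation encapsulates; steps (i) and (ii) of your sketch are correct and match the standard Dinkelbach-type analysis.

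One caveat on your step (iii): the claim that $\Phi$ is convex ``as a max of affine functions of $\alpha$'' is not right for the multi-ratio case. For fixed $\x$, the inner function $\min_{v}\bigl[(\mathbb{T}_\mathcal{M}\x)_v - \alpha(\mathbb{S}_\mathcal{M}\x)_v\bigr]$ is a minimum of affine functions of $\alpha$, hence \emph{concave} in $\alpha$, and a pointwise maximum over $\x$ of concave functions need not be convex. (Convexity of $\Phi$ holds for a single ratio, where the inner function is affine, but generalized fractional programs with $|\mathcal{M}|>1$ ratios are precisely the setting where it can fail.) The Crouzeix--Ferland--Schaible linear-rate proof does not rely on convexity of $\Phi$, so your ultimate appeal to \cite{crouzeix1985algorithm} still closes the argument, but the convexity-based justification you sketch for the two-sided bound $c\,e_{\texttt{it}} \le \Phi(\alpha_{\texttt{it}}) \le C\,e_{\texttt{it}}$ would need to be replaced by their actual estimates (which use strict decrease of $\Phi$, positivity and boundedness of the denominators $\mathbb{S}_\mathcal{M}\x$ on the compact feasible set, and the definition of the iterates).
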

\begin{proof}
    The proof follows by applying the results in \cite{crouzeix1985algorithm}, and showing that the MAF algorithm we propose satisfies the hypotheses for their application. Specifically, it is not difficult to verify that the feasible regions of the problems under study are compact and that the functions involved in the quotients defining the MAF are linear (and hence continuous).
\end{proof}

\section{Optimal subhypergraphs under maximal amplyfing factors} \label{sec:optimal_subhypergraphs}

The main methodological contribution of this paper is the mathematical model we propose to construct self-sufficient subnetworks that maximize the amplification factor. Note that, if the set $\mathcal{M}$ is either unknown or not all nodes in the sub-hypergraph satisfy the conditions for amplification induction, determining this set will become an integral part of the decision-making process for computing the amplification factor. In this case, both the subhypergraph $\mathcal{H}'$ and the set $\mathcal{M}$ must be constructed as part of the procedure. In this section, we detail the methodology that we develop for computing the subhypergraph that maximizes the amplification factor.

\subsection{Problem Formulation}

Given a hypergraph $\mathcal{H} = (\mathcal{N}, \mathcal{A})$, our goal is to construct a self-sufficient subnetwork $\mathcal{H}^{'} = (\mathcal{N}^{'}, \mathcal{A}^{'})$ and a set $\mathcal{M} \subset \mathcal{N}$ that maximizes the amplification factor. By Theorem \ref{th:1}, if the obtained subnetwork has an amplification factor greater than or equal to one, it is considered self-amplifying, and $\mathcal{M}$ would be a proper self-amplifying set. Otherwise, the subnetwork with the highest amplification factor is computed, but $\mathcal{M}$ would merely be a self-sufficient set without net-positive realizability.

We propose a mathematical optimization approach for constructing a subnetwork within a given hypergraph. The proposed mathematical program uses the following decision variables, corresponding to the main decisions to be made: 

\begin{align*}  
y_{v}= 
   \begin{cases} 
      1              & \mbox{if node $v$ is in $\mathcal{M}$}  \\
      0 & \mbox{otherwise}
   \end{cases}, \quad \forall v \in \mathcal{N}.
\end{align*}
\begin{align*}  
\mathbf{x} \in \mathbb{R}_+^{|\mathcal{A}|}: \text{ intensity vector,} \quad \alpha \in \mathbb{R}_+: \text{ amplification factor.}
\end{align*}

These variables determine which nodes belong to the distinguished subset $\mathcal{M}$ ($y$), which hyperarcs are selected in $\mathcal{H}^{'}$ ($x > 0$), the amplification factor of the subnetwork ($\alpha$), and the intensity vector ensuring the amplification factor ($\mathbf{x}$).

These variables are combined through the following inequalities to ensure the adequate construction of the desired subnetwork:
\begin{itemize}
    \item The amplification factor is well-defined:
\begin{align}\label{eq:1}
\sum_{a \in \mathcal{A}} \mathbb{T}_{va} \x_a \geq \alpha y_v \sum_{a \in \mathcal{A}} \mathbb{S}_{va} \x_a, &&\forall v \in \mathcal{N}.
\end{align}

Note that in case $v$ is chosen to be part of $\mathcal{M}$ and $a \in \mathcal{A'}$, the above constraints assure that $\mathbb{T}_\mathcal{M} \x \geq \alpha \mathbb{S}_\mathcal{M} \x$, as required to properly defined the MAF. Otherwise, either $y_v=0$, in whose case the constraint is redundant ($\sum_{a \in \mathcal{A}} \mathbb{T}_{va} \x_a \geq 0$, which is always true by the nonnegativity of $\mathbb{T}$ and the variables), or $x_a=0$, in whose case the hyperarc $a$ does not account in the computation of the MAF.

    \item The subset $\mathcal{M}$ is not empty:
    \begin{align}\label{eq:2}
        \sum_{v \in \mathcal{N}} y_v \geq 1.
    \end{align}
    To assure that the MAF is well defined, at least one node in the subhypergraph must belong to $\mathcal{M}$.
    \item The denominator in the amplification factor is non zero:
    \begin{align}\label{eq:3}
        \sum_{a\in \mathcal{A}} \mathbb{S}_{va} x_a \geq y_v, \quad \forall v \in \mathcal{N},
    \end{align}
    ensuring that if a node is selected in $\mathcal{M}$ ($y_v=1$), its contribution to the amplification factor is strictly positive. 
    \item The nodes in $\mathcal{M}$ must be self-sufficient \eqref{autonomous2}:
    \begin{align}\label{eq:4}
        y_v \leq  \Big(\sum_{a \in \mathcal{A}:\atop \mathbb{T}_{va}>0} x_a\Big) \Big(\sum_{a \in \mathcal{A}:\atop \mathbb{S}_{va}>0}  x_a\Big), \quad \forall v \in \mathcal{N}.
    \end{align}
    This condition ensures that nodes in $\mathcal{M}$ appear in at least one selected hyperarc both as a source and as a target.
    Note that this nonlinear constraint can be adequately linearized as follows:
    \begin{align}
    y_v \leq \sum_{a \in \mathcal{A}:\atop \mathbb{T}_{va}>0} x_a\\
    y_v \leq \sum_{a \in \mathcal{A}:\atop \mathbb{S}_{va}>0}  x_a,
    \end{align}
    Thus, in case $v$ is selected to be in $\mathcal{M}$, at least one hyperarc with $v$ as target and at least one are with $v$ as source are enforced to be chosen in $\mathcal{A}'$.
    \item The hyperarcs in the subnetwork must be self-sufficient \eqref{autonomous1}:
    \begin{align}\label{eq:5}
        x_{a} \leq \Delta \Big(\sum_{v \in \mathcal{N}: \atop \mathbb{S}_{va}>0} y_{v}\Big)\Big(\sum_{v \in \mathcal{N}: \atop \mathbb{T}_{va}>0} y_{v}\Big), \quad \forall a \in \mathcal{A},
    \end{align}
    where $\Delta$ is an upper bound for the components of the intensity vector $\x$. These constraints ensure that a hyperarc is only selected if there are nodes in $\mathcal{M}$ both as inputs and outputs. These nonlinear constraints can be rewritten as linear constraints as follows:
    \begin{align}
        x_a \leq \Delta \sum_{v \in \mathcal{N}: \atop \mathbb{T}_{va} >0} y_v , \quad \forall a \in \mathcal{A}, \\
        x_a \leq \Delta \sum_{v \in \mathcal{N}: \atop \mathbb{S}_{va} >0} y_v, \quad \forall a \in \mathcal{A}.
    \end{align}
    That is, a hyperarc cannot be selected ($x_a=0$) if it does not connect nodes in $\mathcal{M}$.
    
    \item The constructed subnetwork contains at least one hyperarc:
    \begin{align}\label{eq:6}
        \sum_{a \in \mathcal{A}} x_{a} \geq 1.
    \end{align}
    This condition avoids the trivial solution where no hyperarcs are selected.
\end{itemize}

The proposed mathematical optimization model is then:
\begin{align}
    \max &\; \alpha\nonumber\\
    \text{s.t. } & \eqref{eq:1}-\eqref{eq:6},\nonumber\\
    &\mathbf{x}\in \mathbb{R}^{\mathcal{A}}_+,\nonumber\\
    &y \in \{0,1\}^{\mathcal{N}},\nonumber\\
    &\alpha \geq 0.\nonumber
\end{align}

\subsection{Solution Algorithm}

As noted for problem \eqref{MGF}, this optimization model is neither convex nor concave. Furthermore, it involves binary variables, increasing its computational complexity. Despite these challenges, we propose several techniques to solve the problem exactly and efficiently.

We develop an extension of Algorithm \ref{alg:1} that allows the selection of the optimal subhypergraph and the set $\mathcal{M}$ as part of the decision. The solution strategy that we propose is an iterative approach that computes the optimal subnetwork in a finite number of steps, with guaranteed convergence to the optimal solution. The details of these algorithms are provided in Algorithm \ref{alg:3}.

In the proposed methodology, we initialize the intensity vector to $\x_0$ in iteration $\texttt{it}=0$, and compute the amplification factor for this vector for the whole network, $\alpha_\texttt{it}$. With this value, we solve the auxiliary problem:
\begin{align*}
(\rho^*, \bar \x_{it}) \in \arg\max & \,\rho\\
\mbox{s.t.} & \, \rho \leq  (\mathbb{T} \x)_v y_v - \alpha_{\it} (\mathbb{S} \x)_v y_v, &&\forall v \in \mathcal{N},\\
&\rho \geq 0,\\
& (\x, y) \in \mathcal{X}
\end{align*}
where $\mathcal{X} = \Big\{(\x, y) \in \R_+^{|\mathcal{A}^{'}|} \times \{0,1\}^{\mathcal{N}}: (\x, y) \text{ verifies } \eqref{eq:2}-\eqref{eq:6}\Big\}$. In case $\rho=0$, it implies that the obtained solution is optimal for the MAF problem, and we are done. Otherwise, the intensity vector is updated to the $\x$-solution in the above problem, and the process is repeated. 

\begin{algorithm}[h]
\caption{Computation of the subnetwork $\mathcal{H}^{'}$ and the set of self-amplifying entities $\mathcal{M}^{'}$ maximizing the amplification factor in a given hypergraph.\label{alg:3}}
\SetKwInOut{Input}{input}\SetKwInOut{Output}{output}

\Input{$\mathcal{H} = (\mathcal{N}, \mathcal{A})$, $\it =0$, and $\x_0 \in \R_+^{|\mathcal{A}^{'}|}$}
\Output{A subnetwork $\mathcal{H}^{'} = (\mathcal{N}^{'}, \mathcal{A}^{'}) \subseteq \mathcal{H}$ self-sufficient on $\mathcal{M}$ with MAF $\alpha(\mathcal{H}^{'}, \mathcal{M})$.}

Stop=False

\While{Stop=False}{
Define $\alpha^t_{\it} =  \alpha(\x_{it})$ and 
solve the linear program:\begin{align*}
(\rho, \bar \x_{it}) \in \arg\max & \,\rho\\
\mbox{s.t.} & \, \rho \leq  (\mathbb{T} \x)_v y_v  - \alpha_{\it} (\mathbb{S} \x)_v y_v, &\forall v \in \mathcal{N},\\
&\rho \geq 0,\\
& (\x, y) \in \mathcal{X}
\end{align*}
\eIf{$\rho=0$}{
Stop=True,\\
$\mathcal{A}^{'} = \{a \in \mathcal{A}: x_a \geq 0\}$,\\
$\mathcal{M} = \{v \in \mathcal{N}: y_v = 1\}$, and\\
$\alpha(\mathcal{H}^{'},\mathcal{M})=\alpha_\it$}{
$\x_{\it+1} = \bar \x_{\it}$\\
$\it=\it+1$}}
\end{algorithm}

Note that a mixed-integer linear programming problem must be solved at each iteration. Consequently, the complexity of the mathematical optimization method that we propose to extract a subnetwork from a given dynamic system with MAF is highly influenced by the number of variables and constraints involved. In particular, the time required to solve the problem depends on the number of hyperarcs and nodes in the system. To reduce the search space, we derive two strategies that allow us to accelerate the algorithm: a preprocessing procedure that removes nodes and hyperarcs that can never participate in such a subnetwork, and a divide-et-conquer approach to detect independent substructures in the given hypergraph.

\vspace*{0.5cm}
\noindent{\large \textit{Reducing the Set of Potential Nodes and Hyperarcs} \label{alg:reduction}} 
\vspace*{0.5cm}

The first dimensionality reduction that we propose to solve more efficiently Algorithm \eqref{alg:3} consists of pre-processing the nodes and hyperarcs of $\mathcal{H}$ to remove those that will never participate in the optimal subnetwork.

The procedure iteratively removes non-self-sufficient rows and columns from the source matrix $M_1$ and the target matrix $M_2$. Note that those nodes that do not appear as both sourced and targeted by the hyperarcs are not required in our search. Once these nodes are removed from the list, it may happen that some hyperarcs only act as sources or targets but not both, and then they can also be avoided in the list. This process can be iteratively applied until no hyperarcs or nodes are removed.



The proposed strategy begins by initializing all rows and columns for inspection. It then iteratively detects and removes null rows and columns from the resulting incidence matrix. At each iteration, null rows are identified by checking whether the sum of entries in a given row is zero in both the input and output matrices. Similarly, null columns are detected by examining the column sums in these matrices. If no new null rows or columns are found, the algorithm terminates. When null rows or columns are detected, they are removed from both matrices, and the sets of rows and columns to be inspected are updated accordingly. This ensures that any newly created null rows or columns are considered in subsequent iterations. The process repeats until no further null rows or columns remain, at which point the simplified matrices are returned.




 \subsubsection*{Clustering into Connected Components}\label{subsubsec:giveMeMatrixByComponent}
\vspace*{0.5cm}
\noindent {\large \textit{Splitting the hypergraph in connected components}}
\vspace*{0.5cm}

The second strategy we employ consists of detecting whether the hypergraph contains multiple connected components. If so, the subnetwork with MAF problem can be addressed by applying Algorithm \ref{alg:3} separately to each component, and then selecting the one where the maximum MAF was obtained. Although this approach involves solving multiple instances of the problem, each instance is of smaller dimension, which leads to a significant reduction in overall CPU time.

The correct performance of this strategy is assured by the following result.
\begin{theorem}
    Let $\mathcal{G}_1=(\mathcal{N}_1,\mathcal{A}_1), \ldots, \mathcal{G}_k=(\mathcal{N}_k, \mathcal{A}_k)$ be the different connected components for a system $\mathcal{G}=(\mathcal{N},\mathcal{A})$. Let $\mathcal{G}^{'}=(\mathcal{N}^{'},\mathcal{A}^{'})$ be the MAF subnetwork for $\mathcal{G}$, with the set of self-sufficient nodes $M$. Then:
    \begin{align*}
    \alpha(\mathcal{G}) = \max_{l=1, \ldots, k} \alpha(\mathcal{G}_{l}),
    \end{align*}
    where $\mathcal{M}_l$ is the \emph{optimal} set of self-sufficient nodes for the subnetwork $\mathcal{G}_l$, for $l=1, \ldots, k$.
\end{theorem}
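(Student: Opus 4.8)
The plan is to show both inequalities, $\alpha(\mathcal{G}) \geq \max_l \alpha(\mathcal{G}_l)$ and $\alpha(\mathcal{G}) \leq \max_l \alpha(\mathcal{G}_l)$, exploiting the fact that distinct connected components share no nodes and no hyperarcs, so the incidence matrices $\mathbb{S}$ and $\mathbb{T}$ of $\mathcal{G}$ are block-diagonal with respect to the partition $\mathcal{N} = \mathcal{N}_1 \sqcup \cdots \sqcup \mathcal{N}_k$, $\mathcal{A} = \mathcal{A}_1 \sqcup \cdots \sqcup \mathcal{A}_k$. First I would make precise the (implicit) notion of connected component for a multi-directed hypergraph: two hyperarcs are in the same component if they share a node, and nodes inherit the component of any incident hyperarc; the key structural consequence is that for $a \in \mathcal{A}_l$, both $S_a$ and $T_a$ are contained in $\mathcal{N}_l$, hence $\mathbb{S}_{va} = \mathbb{T}_{va} = 0$ whenever $v$ and $a$ lie in different components.

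For the direction $\alpha(\mathcal{G}) \geq \max_l \alpha(\mathcal{G}_l)$: fix the index $l^\star$ achieving the maximum and let $\mathcal{G}_{l^\star}' = (\mathcal{N}_{l^\star}', \mathcal{A}_{l^\star}')$ with node set $\mathcal{M}_{l^\star}$ be an optimal MAF subnetwork inside $\mathcal{G}_{l^\star}$, with optimal intensity vector $\x^\star \in \mathbb{R}_{>0}^{\mathcal{A}_{l^\star}'}$. Since $\mathcal{G}_{l^\star}' \subseteq \mathcal{G}_{l^\star} \subseteq \mathcal{G}$, this same subnetwork, with the same $\mathcal{M}_{l^\star}$, is a feasible candidate for the MAF problem on $\mathcal{G}$ (feasibility of constraints \eqref{eq:1}--\eqref{eq:6} is inherited because all sums range over hyperarcs/nodes that, by the block structure, are already entirely contained in $\mathcal{A}_{l^\star}'$ / $\mathcal{N}_{l^\star}'$). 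Hence $\alpha(\mathcal{G}) \geq \alpha(\mathcal{G}_{l^\star}', \mathcal{M}_{l^\star}) = \alpha(\mathcal{G}_{l^\star})$.

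For the reverse direction $\alpha(\mathcal{G}) \leq \max_l \alpha(\mathcal{G}_l)$: let $\mathcal{G}' = (\mathcal{N}', \mathcal{A}')$ with self-sufficient set $\mathcal{M}$ and optimal intensity $\x$ realize $\alpha(\mathcal{G})$. The crucial claim is that $\mathcal{A}'$ is contained in a single component $\mathcal{A}_{l_0}$. To see this, suppose $\mathcal{A}'$ meets at least two components; then $\mathcal{M}$ also splits across components (by self-sufficiency condition \eqref{autonomous1}, every hyperarc in $\mathcal{A}'$ has an input and an output node of $\mathcal{M}$ in its own component). Pick a node $v \in \mathcal{M}$ and let $l(v)$ be its component; because $\mathbb{T}_{va} = \mathbb{S}_{va} = 0$ for all $a \notin \mathcal{A}_{l(v)}$, the ratio $\frac{\sum_{a \in \mathcal{A}'} \mathbb{T}_{va} \x_a}{\sum_{a \in \mathcal{A}'} \mathbb{S}_{va} \x_a}$ only involves hyperarcs of $\mathcal{A}' \cap \mathcal{A}_{l(v)}$. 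Thus the amplification factor $\alpha(\mathcal{G}', \mathcal{M}; \x) = \min_{v \in \mathcal{M}} (\cdots)$ equals $\min_l \alpha(\mathcal{G}' \cap \mathcal{G}_l,\, \mathcal{M} \cap \mathcal{N}_l;\, \x|_{\mathcal{A}_l})$, a minimum over the components actually touched — and restricting to whichever component attains this minimum gives a feasible subnetwork of that $\mathcal{G}_l$ with amplification factor at least $\alpha(\mathcal{G})$, so $\alpha(\mathcal{G}_l) \geq \alpha(\mathcal{G})$. (Equivalently and more cleanly: dropping all components except the minimizing one can only increase the min-ratio, so WLOG $\mathcal{A}' \subseteq \mathcal{A}_{l_0}$ for some $l_0$, whence $\mathcal{G}' \subseteq \mathcal{G}_{l_0}$ and $\alpha(\mathcal{G}) = \alpha(\mathcal{G}', \mathcal{M}) \leq \alpha(\mathcal{G}_{l_0}) \leq \max_l \alpha(\mathcal{G}_l)$.)

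The main obstacle is the reverse direction, specifically the bookkeeping that an optimal subnetwork need not a priori be confined to one component, and ruling out that "mixing" components could ever help. The resolution is the observation above that the objective is a minimum of per-node ratios and each ratio is blind to hyperarcs outside that node's component, so deleting the non-optimal components is both feasible and non-decreasing for the objective; a minor subtlety to check is that this deletion preserves the self-sufficiency constraints \eqref{autonomous1}--\eqref{autonomous2} and non-triviality \eqref{eq:6}, which holds because those constraints are themselves "local" to each component. I would also note in passing that the theorem statement's phrase "where $\mathcal{M}_l$ is the optimal set" is slightly informal — $\alpha(\mathcal{G}_l)$ denotes the MAF of the best subnetwork-plus-node-set inside $\mathcal{G}_l$ — and I would state this explicitly at the start of the proof to avoid ambiguity.
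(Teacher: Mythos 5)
Your proof is correct and follows essentially the same route as the paper's: both rest on the block-diagonal structure of $\mathbb{S}$ and $\mathbb{T}$ with respect to the component partition, so that each node's output/input ratio only sees hyperarcs of its own component and the minimum over $\mathcal{M}$ decomposes componentwise. Your two-inequality presentation is in fact slightly more careful than the paper's chain of equalities, since you explicitly justify why the max of the componentwise min collapses to a max of componentwise maxima (by restricting to the single best component), a step the paper leaves implicit.
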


\begin{proof}
    The proof follows by noting that for any set $\mathcal{M} \subset \mathcal{N}^{'}$ and any intensity $\x = (\x^1, \ldots, \x^k)$, decomposed by connected components:
    \begin{align*}
    \alpha(\mathcal{G}^{'},\mathcal{M}, \x) = \min_{n \in \mathcal{M}} \frac{\dsum_{a \in \mathcal{A}^{'}} \mathbb{T}_{na} \x_a}{\dsum_{a \in \mathcal{A}^{'}} \mathbb{S}_{na} \x_a} = \min_{l=1, \ldots, k} \min_{n_l \in \mathcal{M}_l} \frac{\dsum_{a \in \mathcal{A}_{l}} \mathbb{T}_{n_la} \x^{l}_a}{\dsum_{a \in \mathcal{A}_{l}} \mathbb{S}_{n_la} \x^{l}_a} = \min_{l=1, \ldots, k} \alpha(\mathcal{G}_l, \mathcal{M}_l, \x^l).
    \end{align*}
    Thus, 
    \begin{align*}
    \alpha(\mathcal{G}^{'}) = \max_{\mathcal{M} \subset \mathcal{A}^{'} \atop \x\in \mathbb{R}^{T}_+} \alpha(\mathcal{G}^{'}, \mathcal{M}, \x) = \max_{l=1, \ldots, k} \max_{\mathcal{M}_l\subset \mathcal{A}_l \atop \x_l\in \mathbb{R}^{\mathcal{A}_l}_+} \alpha(\mathcal{G}_l,\mathcal{M}_l, \x_l) = \max_{l=1, \ldots, k} \alpha(\mathcal{G}_l).
    \end{align*}
\end{proof}

Thus, in view of the result above, we propose a divide-et-conquer approach to solve, exactly, the problem. Specifically, the proposed procedure consists of projecting the hypergraph $\mathcal{H}=(\mathcal{N},\mathcal{A})$ to a directed graph $\mathcal{G}=(V,A)$, where $V = \mathcal{N} \cup \mathcal{A}$, and 
$$
A = \{(v,a) \in \mathcal{N}\times \mathcal{A}: \mathbb{S}_{va}>0\} \cup \{(a,v) \in \mathcal{A}\times \mathcal{N}: \mathbb{T}_{va}>0\},
$$
similar to the graph represented in Figure \ref{fig:bipartite}.
That is, the hyperarcs in $H$ link nodes in $\mathcal{N}$ with the hyperarcs where they participate as source, and hyperarcs in $\mathcal{A}$ with the nodes that participate as target.



Once the graph is constructed, a traversal method is used to identify weakly connected components—subsets of nodes connected via paths, regardless of edge direction. Each component groups nodes and hyperarcs that are directly or indirectly related.

For each weakly connected component, the algorithm determines the nodes and hyperarcs it contains. It then extracts the corresponding input and output incidence submatrices, representing the interactions within the component. These submatrices, along with their associated node and hyperarc indices, form the output. The algorithm continues until all components are processed, returning a comprehensive list of submatrices for all components. This decomposition simplifies the analysis by breaking down complex networks into smaller, manageable subsystems.

\subsection{Incorporating additional features into the mathematical model}\label{sec:features}

In a self-amplifying subnetwork $\mathcal{H}' = (\mathcal{N}', \mathcal{A}')$, it is not required that every node in the set $\mathcal{N}'$ exhibits self-amplification. Instead, only the nodes in the designated \emph{self-amplifying set} $\mathcal{M} \subseteq \mathcal{N}'$ must satisfy this property. Some nodes, although part of the source and target sets of hyperarcs in the subhypergraph that is optimal under our methodology, may never become self-sufficient or satisfy the net-positive realizability condition.

Given a subhypergraph $\mathcal{H}' = (\mathcal{N}', \mathcal{A}')$, together with its optimal self-amplifying set $\mathcal{M}$ and the corresponding intensity vector $\mathbf{x}$, the nodes in $\mathcal{N}' \setminus \mathcal{M}$ can be classified into three types:

\begin{description}
    \item[\bf Source Nodes:] Nodes that appear only as sources in the hyperarcs of $\mathcal{A}'$, but never as targets. That is, all $v \in \mathcal{N}' \setminus \mathcal{M}$ such that $\mathbb{T}_{va} = 0$ for all $a \in \mathcal{A}'$ and $\mathbb{S}_{va} > 0$ for some $a \in \mathcal{A}'$.
    
    \item[\bf Sink Nodes:] Nodes that appear only as targets in the hyperarcs of $\mathcal{A}'$, but never as sources. That is, all $v \in \mathcal{N}' \setminus \mathcal{M}$ such that $\mathbb{S}_{va} = 0$ for all $a \in \mathcal{A}'$ and $\mathbb{T}_{va} > 0$ for some $a \in \mathcal{A}'$.
    
    \item[\bf Non-amplifying Nodes:] Nodes that appear both as sources and targets in $\mathcal{A}'$, but whose net balance under the intensity vector $\mathbf{x}$ is negative. That is, there exist $a_1, a_2 \in \mathcal{A}'$ such that $\mathbb{S}_{v a_1} > 0$ and $\mathbb{T}_{v a_2} > 0$, but
    $\sum_{a \in \mathcal{A}'} \mathbb{Q}_{va} \mathbf{x}_a < 0$.
\end{description}

This classification has previously appeared in the context of Chemical Reaction Networks (CRNs), where nodes represent chemical species and hyperarcs represent reactions. In that literature, the three categories above are referred to as \emph{Food}, \emph{Waste}, and \emph{Non-core} species, respectively.

Note that, with these types, source and non-amplifying nodes exhibit negative net incidence in $\mathcal{H}^{'}$, that is, for $v \in \mathcal{N}^{'}$ classified as input or non-amplifying, $\sum_{a \in \mathcal{A}^{'}} \mathbb{Q}_{va} \mathbf{x}_a < 0$. Thus, we refer to these as \emph{drain nodes} under $\mathbf{x}$, denoted by $\mathcal{N}^-$. Analogously, the sink nodes will be denoted by $\mathcal{N}^+ := \mathcal{N}' \backslash (\mathcal{M} \cup \mathcal{N}^-)$, and, together with the self-amplifying nodes, these are \emph{amplifying nodes}. Thus $\mathcal{N}' = \mathcal{M} \cup \mathcal{N}^- \cup \mathcal{N}^+$. 

With all the above, although the construction of MAF subhypergraphs is a valuable tool for identifying the hyperarcs and nodes that exhibit the highest self-amplification potential and therefore contribute most significantly to system amplification, in some applications it may be required to construct MAF subhypergraphs under specific constraints tailored to particular scenarios. In this section, we present several interesting cases and explain how they can be explicitly incorporated into the mathematical optimization model we propose for constructing MAF subnetworks. Specifically, we detail how to designate certain nodes as sources, sinks, or non-amplifying within the resulting subnetwork using linear inequalities involving the decision variables already defined in our model.

Although one can incorporate different features into the model that we already presented above, such an approach focuses on constructing $\M$ and the hyperarcs, but there are no special conditions on the source or target nodes of those hyperarcs. Thus, in order to account for those nodes not in $\M$, it is required to incorporate into our model decision variables that allow us to identify those nodes, namely:
\begin{align*}
    t_v = \begin{cases}
        1 & \mbox{if $v \in \mathcal{N}'$},\\
        0 & \mbox{otherwise.}
    \end{cases} \forall v \in \mathcal{N}
    \end{align*}
that is, $t_v=1$ if node $v$ is in the selected hypergraph (that will maximize the MAF). This set of variables is adequately defined by the following set of inequalities:
    \begin{align}
         x_a &\leq \Delta t_v, &&\forall a \in \mathcal{A}, v \in \mathcal{N} \text{ such that }  v \in T_a \cup S_a,\\
         t_v &\leq \sum_{a \in \mathcal{A}: v \in T_a \cup S_a} x_a, &&\forall v \in \mathcal{N},\\
         y_v  &\leq t_v, &&\forall v \in \mathcal{N}.
    \end{align}
    That is, if hyperarc $a$ is part of the subnetwork ($x_a>0$), then all the nodes involved in $a$ (as source or target) must be \emph{activated} to be part of the subhypergraph $\mathcal{H}'$; and if all the hyperarcs in which a node $v$ is either source or target are not in the subhypergraph, then $v$ will not be a node of such a subhypergraph. The last set of constraints assures that the set of self-amplification nodes in $\mathcal{M}$ (those with $t_v=1$ ) are chosen from the whole set of activated nodes $\mathcal{N}'$($y_v=1$). So the $\mathcal{M}$ is a subset of $\mathcal{N}'$. 
    
    Those constraints can be additionally incorporated into our model, to construct, based on its solution $(\bar y, \bar x)$, the optimal subhypergraph as $\mathcal{H}'=(\mathcal{N}',\mathcal{A}')$ with:
    \begin{align*}
    &\mathcal{N}'=\{v\in \mathcal{N}: \bar y_v =1\}, &&\mathcal{A}'=\{a\in \mathcal{A}: \bar x_a \geq 0 \}.
    \end{align*}

    With this encoding, one can require the obtained subhypergraph different conditions related to the types of nodes:

    \begin{itemize}
        \item {\bf Subhypergraphs with specific source nodes}: Particular nodes can be considered as source nodes, which are assumed to participate as source but not as targets. In case one desires to construct subhypergraphs with MAF among those that uses a fixed subset of nodes as source nodes, $\mathcal{F} \subset \mathcal{N}$. This condition can be imposed on our model with the following sets of constraints:
        \begin{align}
            &t_v = 1, &&\forall v \in \mathcal{F},\\
            &x_a = 0, &&\forall a \in \mathcal{A}: \exists v \in \mathcal{F} \text{ such that } v \in T_a
        \end{align} 
The first equations enforce that the source nodes are activated in the subhypergraph, whereas the second set of constraints avoid the use of source nodes as target of any hypearc in the subnetwork.
                \item {\bf Subhypergraphs with specific sink nodes}: Similarly to the previous consideration, some nodes are considered to be only target nodes of the hyperarcs but never source nodes. Thus, in one one desires to construct subhypergraphs with MAF among those that have a fixed sink set of nodes, $\mathcal{W} \subset \mathcal{N}$. This condition can be imposed to our model with the following sets of constraints:
        \begin{align}
            &t_v = 1, &&\forall v \in \mathcal{W},\\
            &x_a = 0, &&\forall a \in \mathcal{A}: \exists v \in \mathcal{W} \text{ such that } v \in S_a.
        \end{align}
Analogously to the previous specifications, the first equations enforce that the sink nodes are activated in the subhypergraph, whereas the second set of constraints avoid the use of those nodes as source of the hyperarcs.
\item {\bf Subhypergraphs with specific non-amplifying nodes}: In case a set of nodes  $\mathcal{B} \subset \mathcal{N}$ is assumed to be source and target of the hyperarcs in the subhypergraph but with a negative net aggregation, we can impose this condition by the following constraints:
        \begin{align}
            t_v &= 1, &&\forall v \in \mathcal{B},\\
            x_a &= 0, &&\forall a \in \mathcal{A}: \exists v \in \mathcal{B} \text{ such that } v \in T_a\backslash S_a \cup S_a\backslash T_a,\\
            \mathbb{Q}_v \x &\leq 0, &&\forall v \in \mathcal{B}.
        \end{align}
The first equations enforce that the non-amplifying nodes are activated in the subhypergraph. The second set of constraints avoid only the use of the nodes in $\mathcal{B}$ only as source or target of any hyperarc. Finally, the net aggregation under the intensity vector of these nodes is enforced to be nonpositive.
    \end{itemize}
All these constraints can be combined; that is, if $(\mathcal{F}, \mathcal{W}, \mathcal{B})$ is a triplet of desired food, waste, and non-self-amplifying desired nodes, respectively, all the above conditions can be imposed together in our model.

\section{Computational Experiments} \label{sec:experiments}

To validate the performance of the mathematical optimization models, we conducted a series of computational experiments. These experiments also enable the comparison of the performance of the proposed algorithms on the same input data. 

We run a series of computational experiments designed to evaluate the performance of the proposed algorithms across various synthetic instances. These instances are carefully generated using the Python software \texttt{SBbadger} proposed by \cite{sbbadger} to reflect diverse problem characteristics, ensuring a thorough assessment of each algorithm’s efficiency, scalability, and robustness of random hypergraphs simulating chemical reaction networks. The primary objective of these experiments is to validate the proposed method for computing the MAF of a given subhypergraph. The results provide insights into the strengths and limitations of each algorithm and guide further optimization efforts.

We generated hypergraph-based systems of different sizes as follows: for each $n \in \{10, 20, 30, 40, 50, 100\}$, we randomly simulated $8$ instances with $|\mathcal{A}|=n$, using the function \texttt{generate\_serial.models}. The software provides the incidence matrices $\mathbb{T}$ and $\mathbb{S}$ required to compute the MAF using the different methodologies. The generated files are available in the GitHub repository \url{github.com/anticiclon/self-amplifying-hypergraphs}.

We ran our algorithms until they identified the optimal solutions of the MAF problems. However, in the context of searching for self-amplifying subhypergraphs, Theorem~\ref{th:1} ensures that—even if the solution is not yet proven optimal—any iteration of Algorithm~\ref{alg:1} or Algorithm~\ref{alg:3} that yields a value of $\alpha > 1$ produces a subnetwork that satisfies the self-amplification conditions. Therefore, in our experiments, in addition to recording the CPU time required to certify optimality, we also report the first self-amplifying subhypergraph identified during the process, along with the time taken to find it.

We applied our approaches to all the generated instances, using Gurobi 11.03 as the optimization solver on an Ubuntu 22.04.03 environment with an AMD EPYC 7042p 24-Core Processor and 64 GB RAM. The maximum number of iterations for the algorithm was set to 1000.


For each instance, we recorded the following metrics for Algorithms \ref{alg:1} and \ref{alg:3}: the amplification factor obtained at each iteration until convergence, the CPU time required for each iteration, and the corresponding feasible intensity vector ($\x$) achieving the amplification factor. Additionally, for Algorithm \ref{alg:3}, we also collect the nodes and hyperarcs comprising the optimal subhypergraph.

The detailed results obtained for the synthetic instances are available in the GitHub repository 
\url{github.com/anticiclon/self-amplifying-hypergraphs}. 

Table \ref{table_results} presents the averaged results of the experiments. For each number of nodes in the system (first column) and each algorithm (second column), the table shows the average number of iterations and CPU time (in seconds), listed in columns \texttt{Opt\_it} and \texttt{Opt\_Time}, respectively—required to reach the optimal MAF. We also show the average number of iterations and CPU time (in seconds) required to identify the first self-amplifying subnetwork, shown in columns \texttt{First\_it} and \texttt{First\_Time}.

As observed, the computational demands increase as the number of decisions required by the algorithms grows. As expected, identifying the optimal subhypergraph is more challenging and time-consuming than other approaches. Nevertheless, while obtaining the optimal solution may be computationally expensive, detecting a self-amplifying subhypergraph (and determining whether one exists) can be achieved within reasonable CPU time.

\begin{table}[h!]
\centering
\begin{tabular}{| l | l | l | r | l | r |}
\hline
\# Nodes & Alg. & \texttt{First\_it} &  \texttt{First\_Time} (secs.) & \texttt{Opt\_it} & \texttt{Opt\_Time} (secs.) \\
\hline
$10$ & \ref{alg:1} & 3 & 0.02 & 13 & 0.09 \\
 & \ref{alg:3} & 2 & 0.04 & 6 & 0.09 \\
\hline
$20$ & \ref{alg:1} & 5 & 0.14 & 19 & 0.45 \\
& \ref{alg:3} & 3 & 0.09 & 10 & 1.97 \\
\hline
$30$ & \ref{alg:1} & 5 & 0.29 & 20 & 0.97 \\
 & \ref{alg:3} & 3 & 0.40 & 11 & 38.37 \\
\hline
$40$ & \ref{alg:1} & 4 & 0.31 & 14 & 1.13 \\
 & \ref{alg:3} & 3 & 0.61 & 9 & 87.33 \\
\hline
$50$ & \ref{alg:1} & 5 & 2.67 & 24 & 13.53 \\
 & \ref{alg:3} & 4 & 26.50 & 11 & 2301.04 \\
\hline
$100$ & \ref{alg:1} & 7 & 1.75 & 13 & 3.56 \\
 & \ref{alg:3} & 3 & 4.23 & 10 & 1723.09 \\
\hline
\end{tabular}
\caption{Average results of the synthetic experiments.\label{table_results}}
\end{table}

As previously mentioned, detecting and enumerating all self-amplifying subhypergraphs is a challenging  problem (NP-complete). However, the use of the MAF appears to mitigate the computational cost of these tasks, as illustrated in Figure \ref{fig:times}. There, we show the boxplots, for each instance size, of the CPU times (in log scale to ease the comparison) required to both detect a first self-amplifying subhypergraph (MAF greater than one) and compute the optimal MAF, with the three approaches. For Algorithm \ref{alg:1} (left plot), the green boxes indicate the CPU times required to detect, in a first iteration, that the given hypergraph is self-amplifying (in case it is), whereas the orange boxes indicate the times to compute the MAF of the hypergraph. 
The left plot shows the results for Algorithm \ref{alg:3}. There, the green boxplots show the times required to detect that a self-amplifying subhypergraph exists for the hypergraph, whereas the orange boxes show the times required to find the optimal subhypergraph.

As can be observed, in all cases, certifying the optimality of the MAF may require, in the worst-case scenario, hours of CPU time. In contrast, if we focus on the time required in the iterative procedure to find, for the first time, a subhypergraph with an MAF greater than $1$, the procedure took only a few seconds to identify a self-amplifying subnetwork. Thus, if the goal is to detect that a hypergraph is self-amplifying or to find a subhypergraph of the given hypergraph that is self-amplifying, the difficulty of the problem reduces significantly. 

\begin{figure*}[h]
\includegraphics[width=0.49\linewidth]{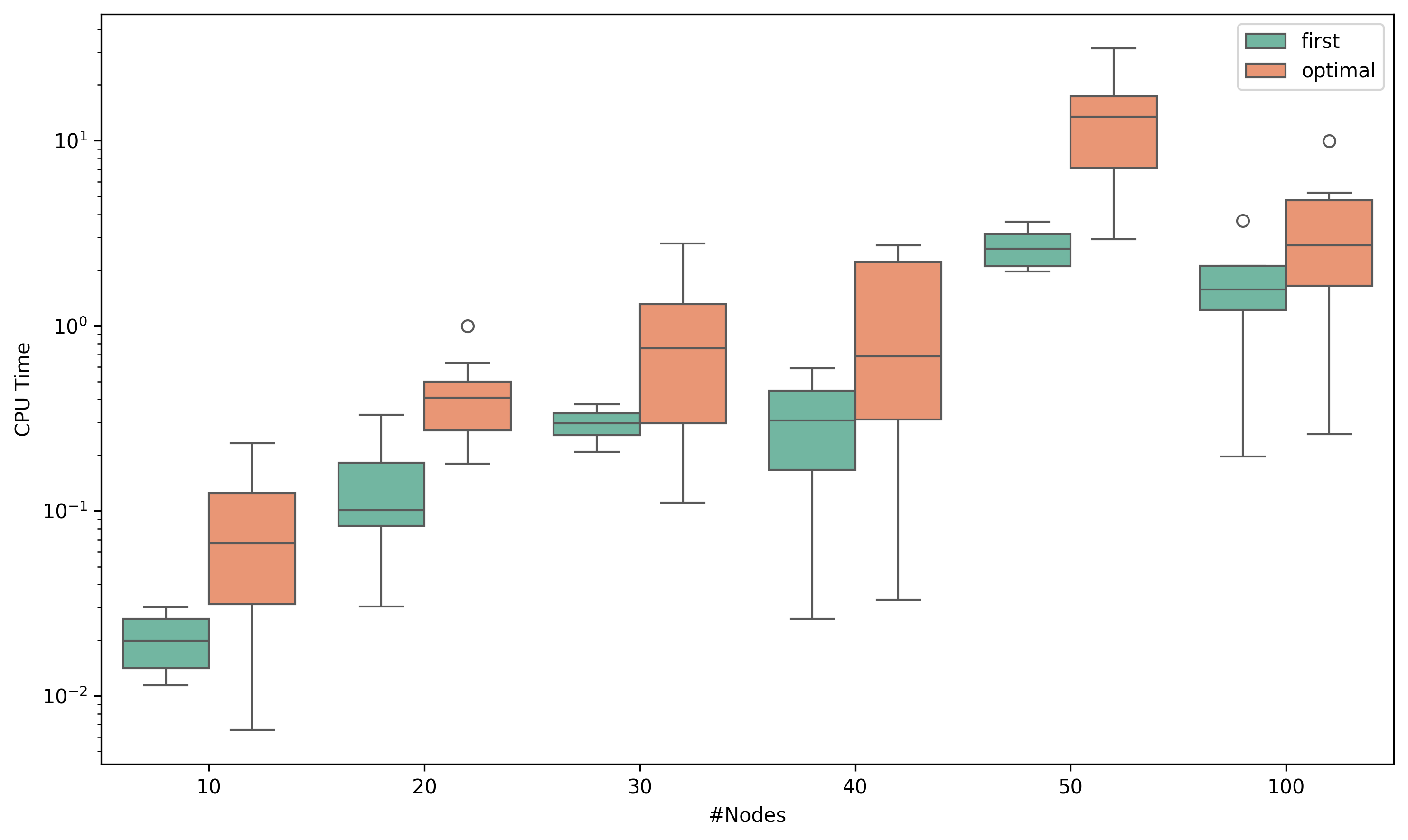}~\includegraphics[width=0.49\linewidth]{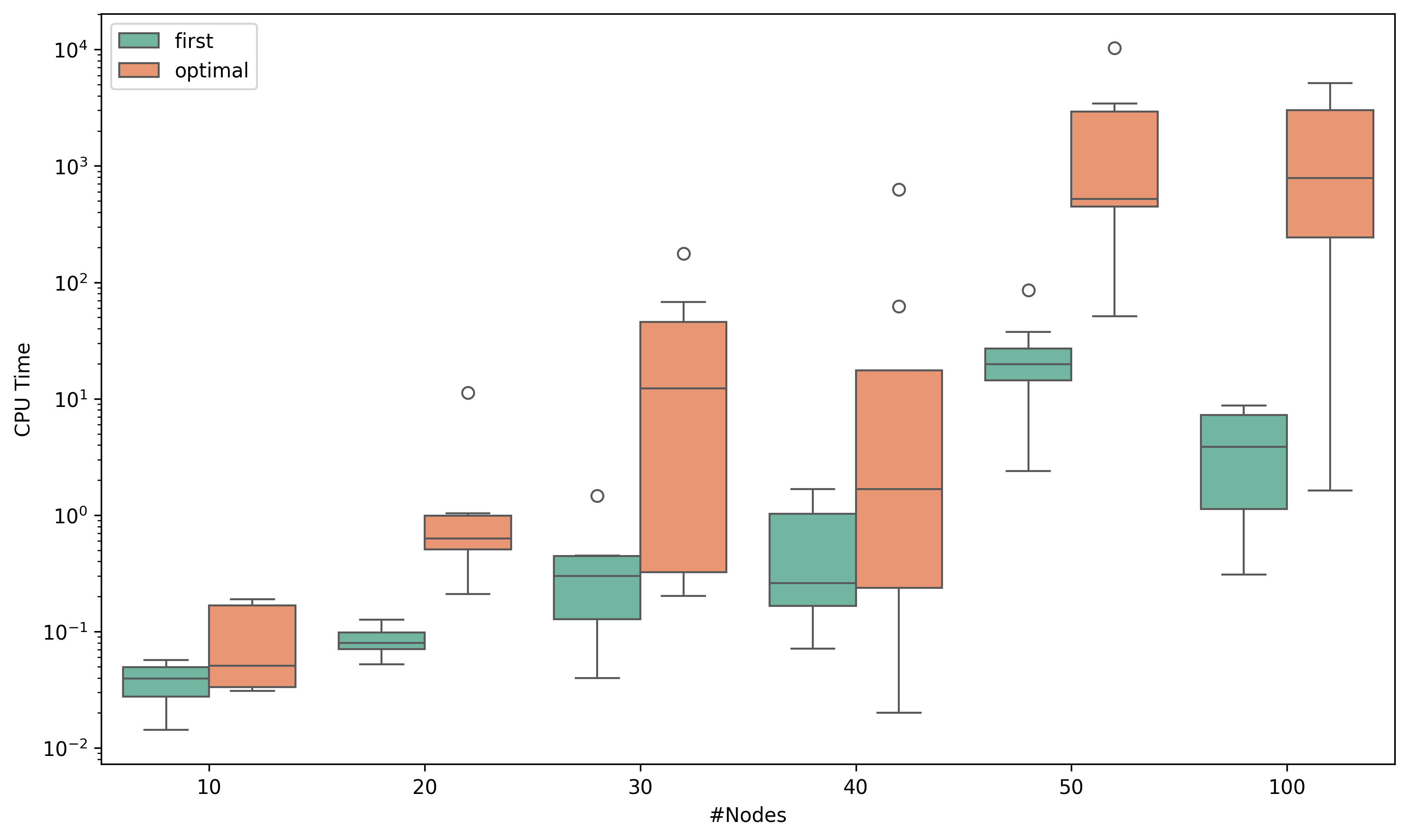}
    \caption{Boxplots of CPU times (in log scale) for the difference sizes of the instances. The green boxplots indicate the CPU times to reach the first self-amplifying network ($\alpha\geq 1$). The orange boxplots indicate the times to find the subhypergraph maximizing the value of $\alpha$.\label{fig:times}}
\end{figure*}

The second interesting observation drawn from our experiments is that although the MAF increases with the number of iterations of our algorithm, it stabilizes to a close-to-optimal value in a few iterations. This means that one could find a high-quality subhypergraph (in terms of the MAF) in a relatively small number of iterations and CPU time. In Figures \ref{fig:gap_alpha1} and \ref{fig:gap_alpha2}, we show the gap of the solution obtained with respect to the optimal MAF, in terms of the CPU time (left) and the number of iterations (right), for the instances of size $50$ (a similar behavior is observed for the other sizes), when Algorithms \ref{alg:1} and \ref{alg:3} are applied, respectively.

\begin{figure*}[h!]
\includegraphics[width=0.5\textwidth]{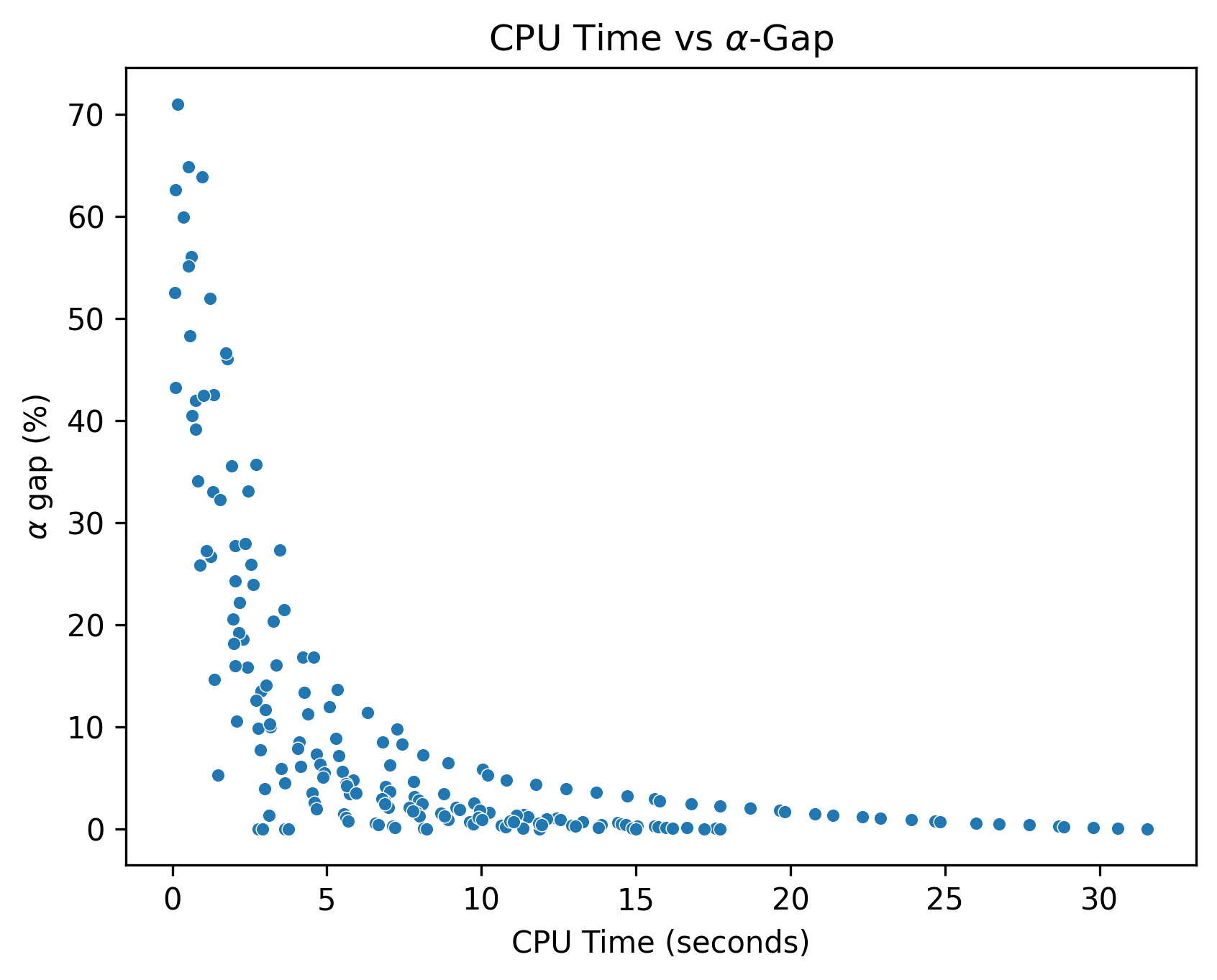}~\includegraphics[width=0.5\textwidth]{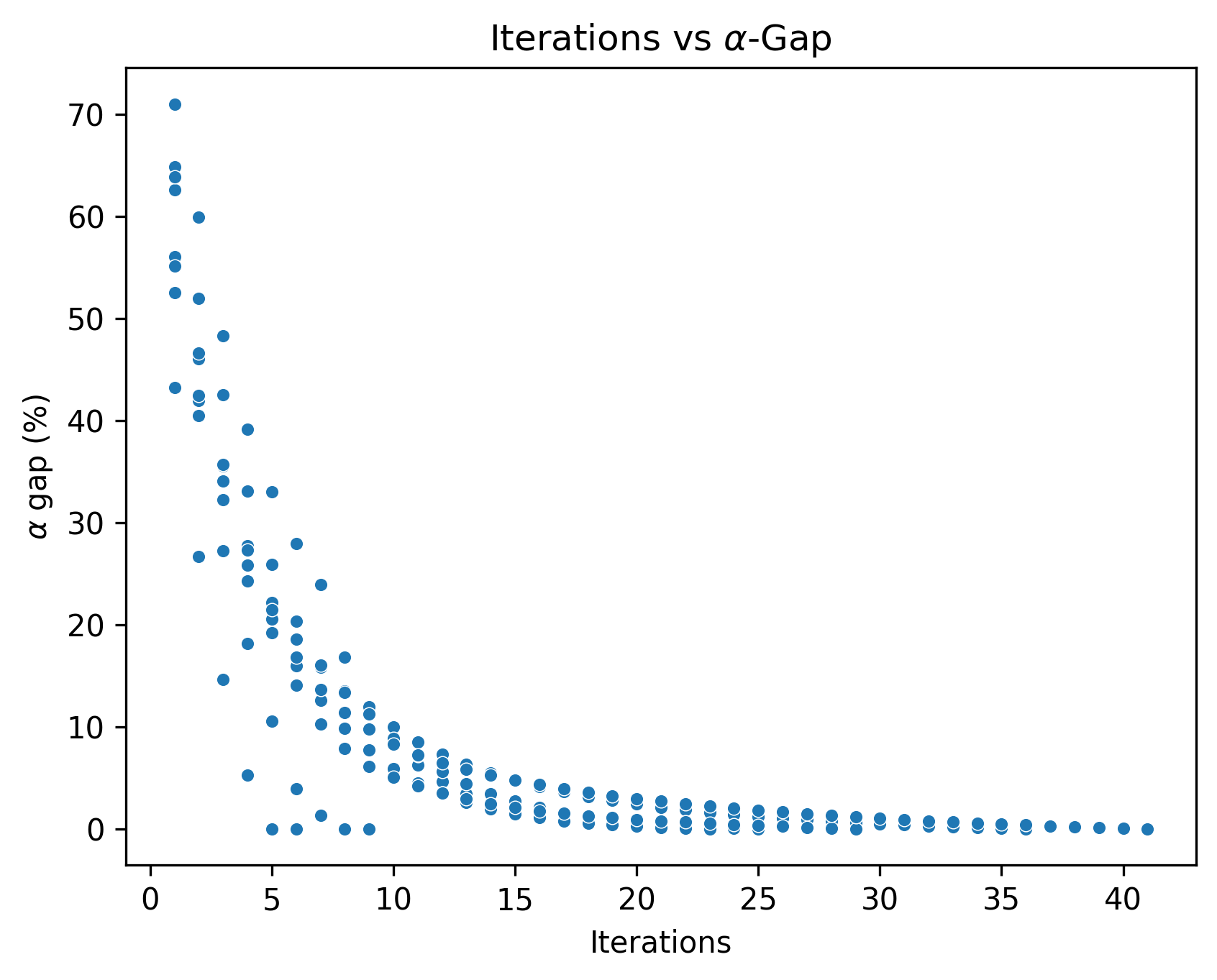}
    \caption{Gap with respect to the optimal value of the MAF with Algorithm \ref{alg:1} in terms of CPU time (left) and the number of iterations (right).
    \label{fig:gap_alpha1}}
\end{figure*}

\begin{figure*}[h!]
\includegraphics[width=0.5\textwidth]{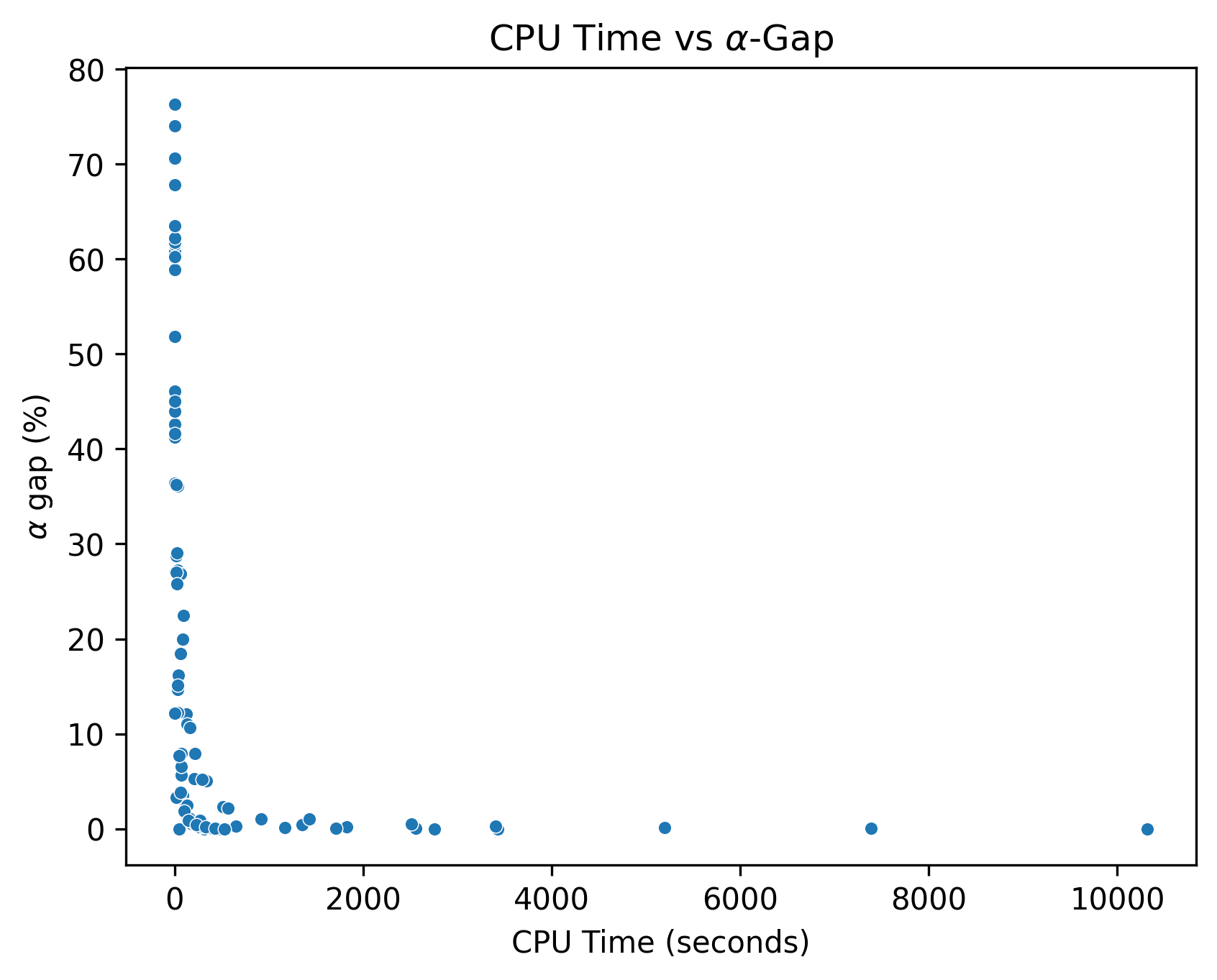}~\includegraphics[width=0.5\textwidth]{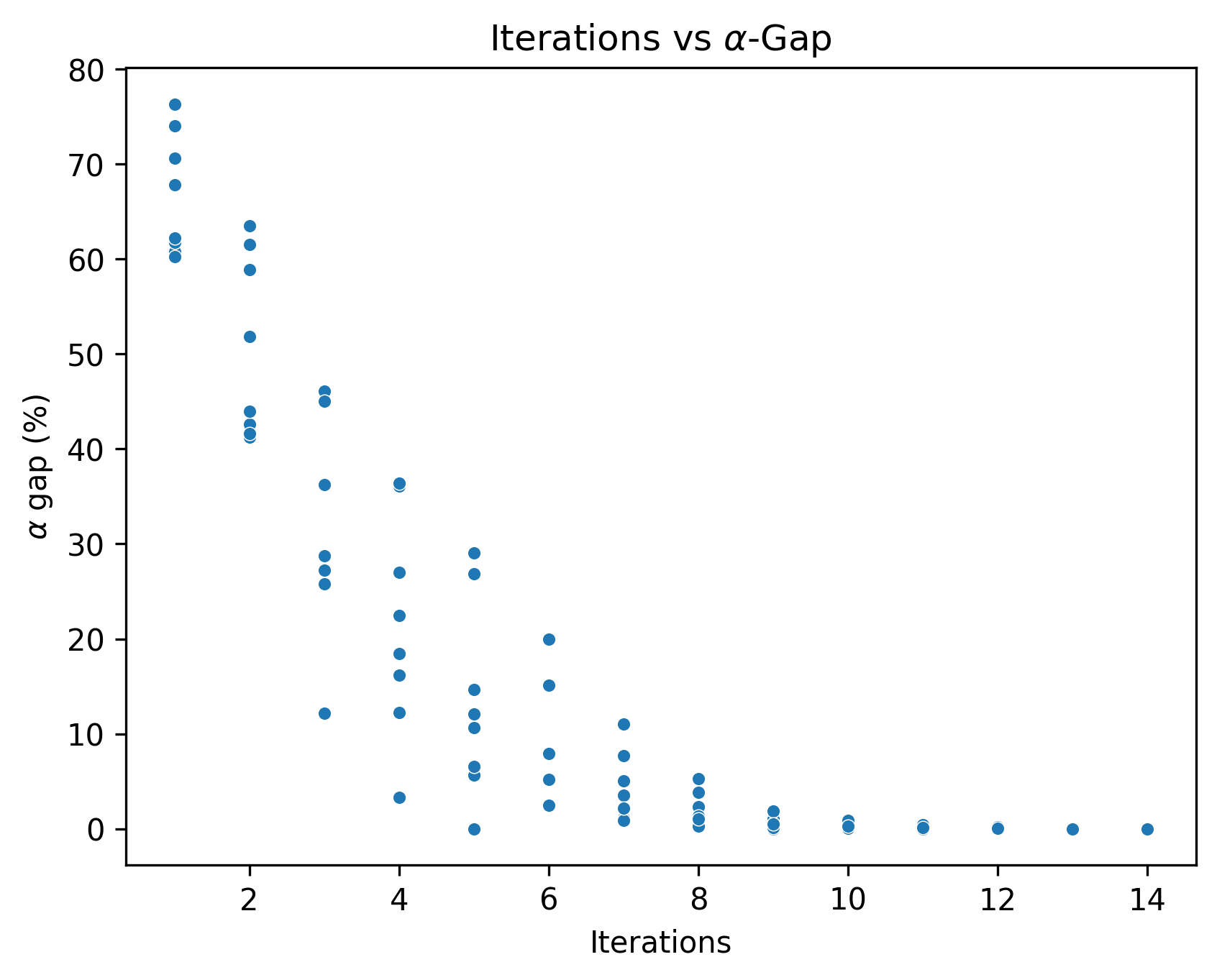}
    \caption{Gap with respect to the optimal value of the MAF with Algorithm \ref{alg:3} in terms of CPU time (left) and the number of iterations (right).
    \label{fig:gap_alpha2}}
\end{figure*}

In both algorithms, in the first iterations, the initial gap is considerably reduced from $70-80\%$ to less than $20\%$. This performance can also be observed in Figure \ref{fig:thresholds} where we represent the percent of instances that reached gaps smaller than thresholds $5\%$, $10\%$, $15\%$, $20\%$, and $50\%$ within a certain CPU time for the three approaches (Algorithm \ref{alg:1}: left, and Algorithm \ref{alg:3}: right). The performance of Algorithm \ref{alg:3}, as expected, is different, and it requires more time to stabilize, although in $10$ minutes half of the instances reduce the gap from $70-80\%$ to $20\%$.

\begin{figure}[h]
\includegraphics[width=0.49\textwidth]{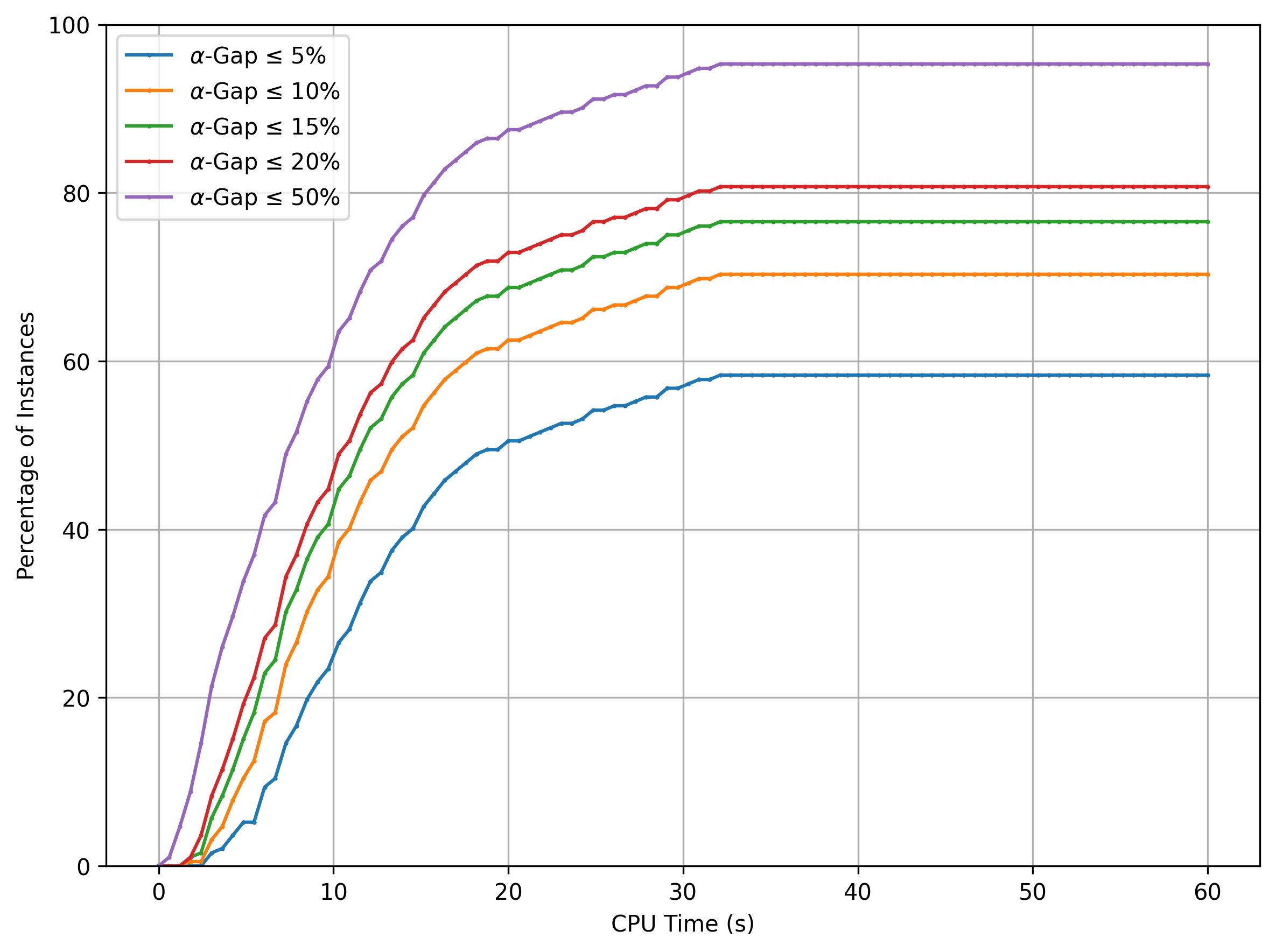}~\includegraphics[width=0.49\textwidth]{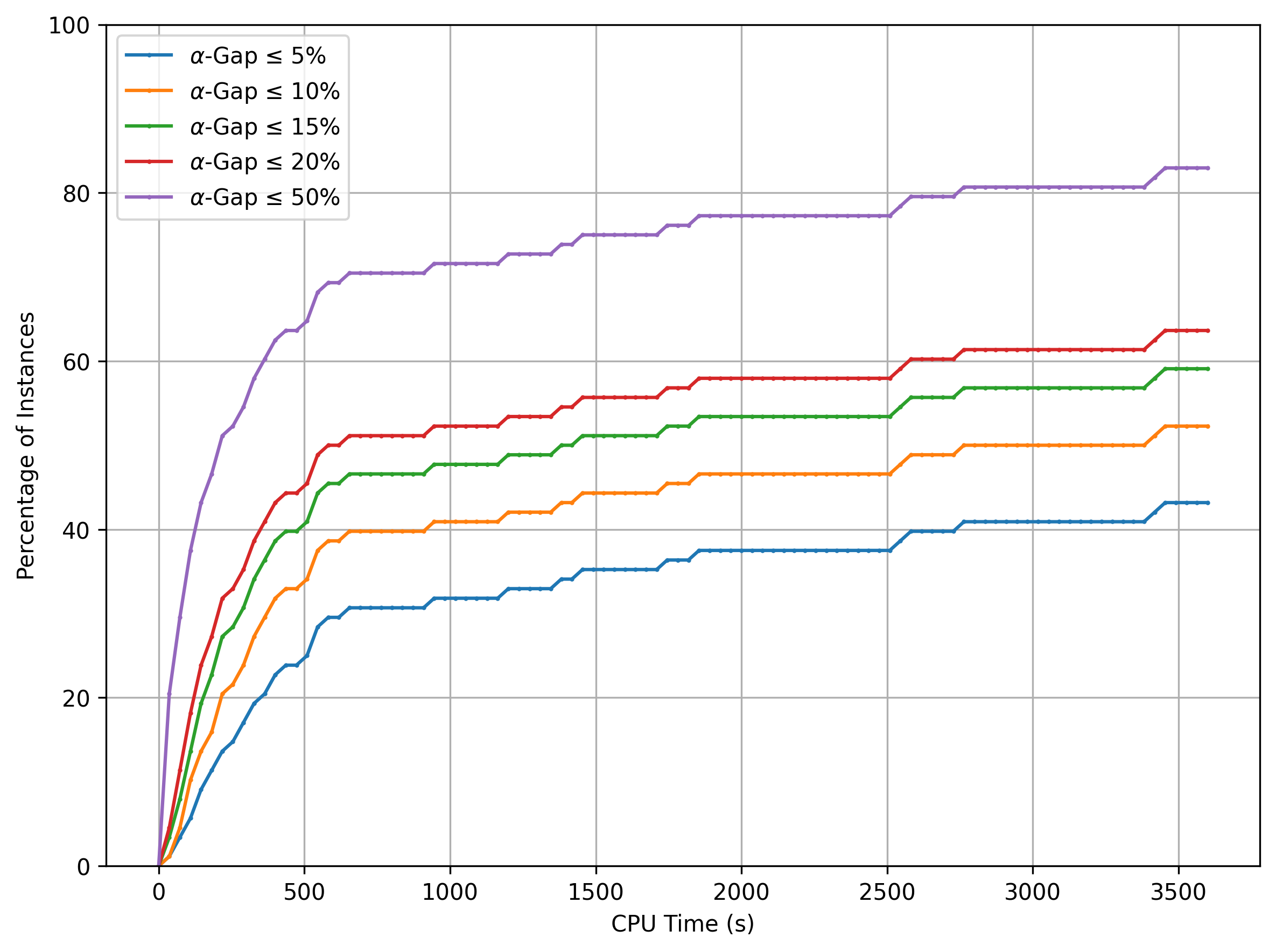}
\caption{Performance of the $\alpha$ gap by time for the different approaches. \label{fig:thresholds}}
\end{figure}





\section{Application: Autocatalytic Subnetworks in Chemical Reaction Networks}
\label{sec:case_study}

In this section, we apply the proposed framework for identifying and analyzing self-amplifying subhypergraphs in two real-world instances where hypergraph naturally arise, in Chemical Reaction Networks (CRNs). We study two well-known datasets: the \emph{Formose reaction network} and the \emph{E. coli core metabolism network}.

CRNs can be modeled as multi-directed hypergraphs, where nodes represent chemical species and hyperarcs represent reactions that consume and produce these species. Autocatalytic subhypergraphs, in which certain species catalyze their own production, are essential for understanding self-replication and amplification dynamics in chemical and biological systems. We apply our optimization-based methods, specifically Algorithms~\ref{alg:1} and~\ref{alg:3}, to detect such substructures and compute their Maximal Amplification Factors (MAFs).

Formally, a CRN is represented as a hypergraph $\mathcal{H} = (\mathcal{N}, \mathcal{A})$, where $\mathcal{N}$ denotes the set of species and $\mathcal{A}$ the set of reactions. Each reaction $a \in \mathcal{A}$ is modeled as a hyperarc $a = (S_a, T_a)$, with $S_a \subseteq \mathcal{N}$ as the source set (reactants) and $T_a \subseteq \mathcal{N}$ as the target set (products). The input and output incidence matrices, $\mathbb{S}$ and $\mathbb{T}$, encode the multiplicities of species consumed and produced, respectively. Our objective is to identify subhypergraphs $\mathcal{H}' = (\mathcal{N}', \mathcal{A}') \subseteq \mathcal{H}$ that exhibit self-amplification. Our notion of self-amplifying subhypergraphs coincides with some of the notions of autocatalytic subnetwork that have been proposed in the literature~\cite[see e.g.][]{gagrani2023geometry}.

The two CRNs that we analyze here have been shown to play a crucial role in understanding biological and chemical evolution. Both networks exhibit complex interrelationships and include well-known autocatalytic processes, making them particularly suitable for evaluating the practical effectiveness of our methods.

The first network is the \textbf{Formose Reaction Network}, which models the prebiotic synthesis of simple sugars, such as ribose, from formaldehyde. This network comprises 29 species and 38 reactions, and it features well-characterized autocatalytic cycles that are widely regarded as potential precursors to life. Due to its relatively compact size and well-understood behavior, the Formose network is an excellent candidate for detailed analysis.

The second network is the \textbf{E. coli Core Metabolism Network}, derived from the BiGG Models platform \cite{king2016bigg}. This network captures the core metabolic processes of Escherichia coli, a widely studied model organism in microbiology. It consists of 72 species and 95 reactions, encompassing the fundamental pathways required for cellular metabolism. Its larger scale and biological complexity make it a strong benchmark for assessing the scalability and interpretability of our analytical framework.

We applied three algorithms to both networks as part of our analysis. First, Algorithm~\ref{alg:1} was used to compute the maximum autocatalytic flow (MAF) for a given self-sufficient subhypergraph $\mathcal{H}$, as detailed in Section~\ref{sec:computing_MAF}. Second, we performed an exhaustive enumeration of autocatalytic cores using the method described by \citet{gagrani2023geometry}. For each identified core, we then applied Algorithm~\ref{alg:1} to evaluate its MAF. Finally, Algorithm~\ref{alg:3} was used to identify the subhypergraph $\mathcal{H}' \subseteq \mathcal{H}$ that achieves the highest MAF. This approach jointly optimizes the selection of both species and reactions, as explained in Section~\ref{sec:optimal_subhypergraphs}.

In what follows, we summarize our findings for both networks below. Chemical networks are visualized as bipartite graphs, with circles representing species and squares representing reactions. For simple reactions involving one-to-one conversions, we draw direct arcs between species.

\subsection*{Formose Reaction Network}

Following the application of the reduction procedure (Algorithm \ref{alg:reduction}), Algorithm \ref{alg:1} was applied to the resulting self-sufficient subhypergraph, which consisted of 27 species. The algorithm identified a maximum autocatalytic factor (MAF) of 1.130 in 2.09 seconds across 21 iterations. The resulting intensity vector $\mathbf{x}$ concentrated the flow among 7 species and 7 reactions, aligning with a known autocatalytic core, as depicted in the left panel of Figure~\ref{fig:formose_results_1}.

To further explore the network's autocatalytic capacity, we enumerated 38 distinct autocatalytic cores and computed their MAFs using Algorithm~\ref{alg:1}. The top three cores, based on MAF, are shown in the left panel of Figure~\ref{fig:formose_results_2}, while the distribution of MAFs across all cores appears in the right panel. Additionally, Figure~\ref{fig:formose_results_3} demonstrates the inverse relationship between core strength and size, with the strongest cores typically associated with smaller subhypergraphs in terms of both species and reactions.

Using Algorithm~\ref{alg:3}, we identified a particularly efficient substructure consisting of 7 species and 7 reactions, yielding a MAF of 1.138. This result was obtained in just 1.6 seconds over 5 iterations and corresponds to the strongest known autocatalytic core, as shown in the right panel of Figure~\ref{fig:formose_results_1}.

\begin{figure}[h!]
    \centering
    \includegraphics[width=0.495\textwidth]{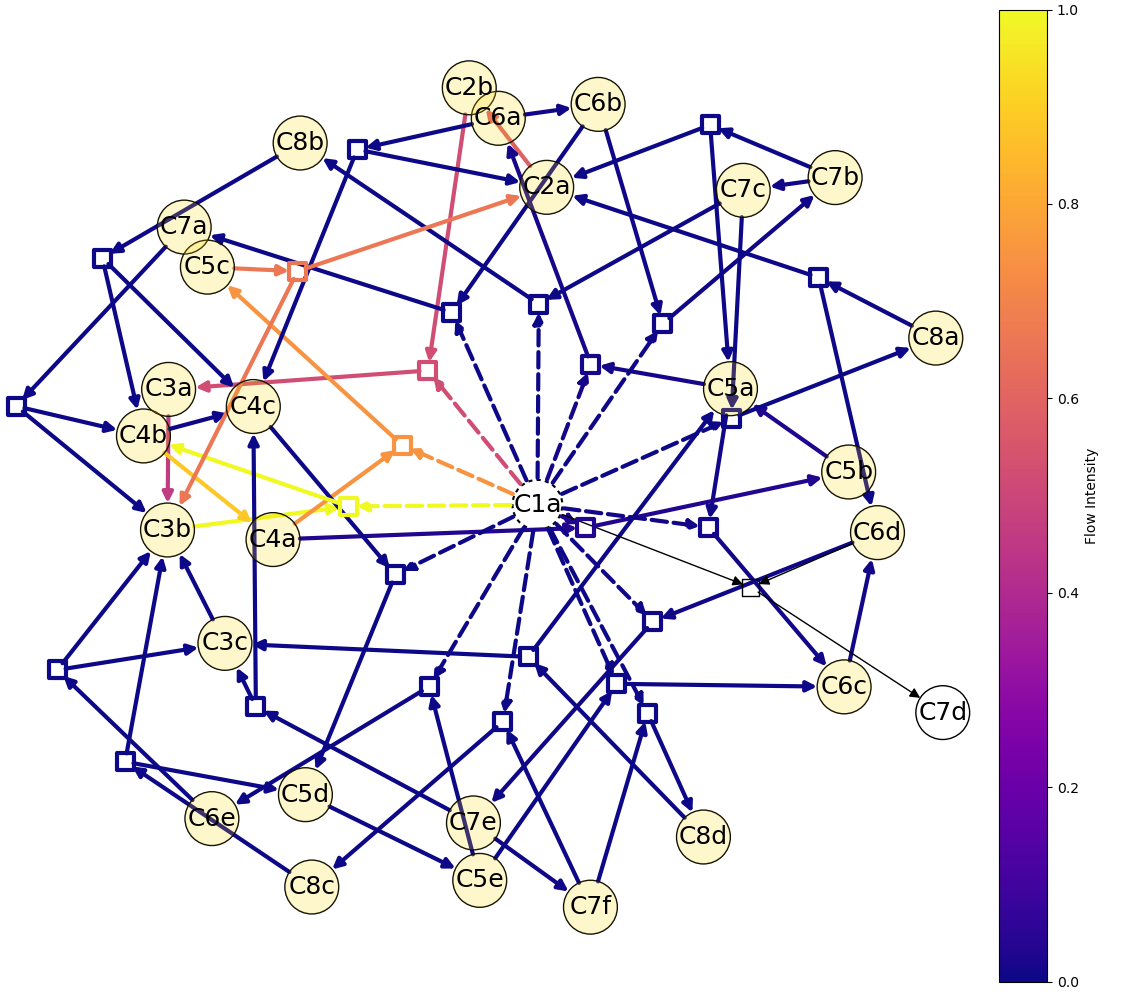}
    \includegraphics[width=0.495\textwidth]{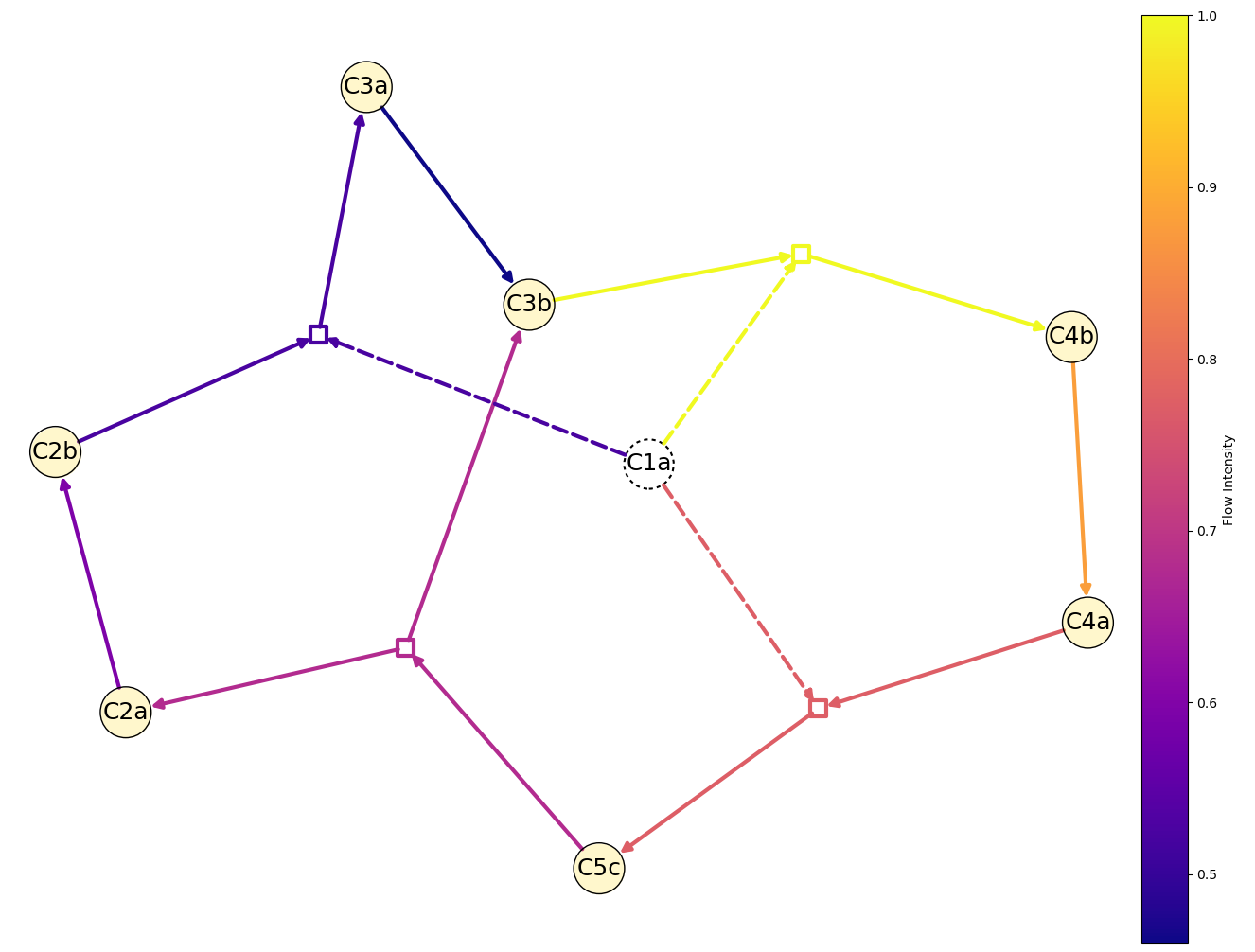}
    \caption{Application of Algorithms~\ref{alg:1} and~\ref{alg:3} to the Formose network. The left panel shows the largest self-sufficient subnetwork identified by Algorithm~\ref{alg:1}. The right panel depicts the autocatalytic subnetwork with the highest MAF (\textit{strongest subnetwork}), as determined by Algorithm~\ref{alg:3}. Notably, this strongest subnetwork corresponds to the autocatalytic core with the highest MAF (also highlighted in green in the left panel of Figure~\ref{fig:formose_results_2}). The legend indicates the optimal flow-intensity assigned to each reaction within the subnetwork.}
    \label{fig:formose_results_1}
\end{figure}

\begin{figure}[h!]
    \centering
    \includegraphics[width=0.4\textwidth]{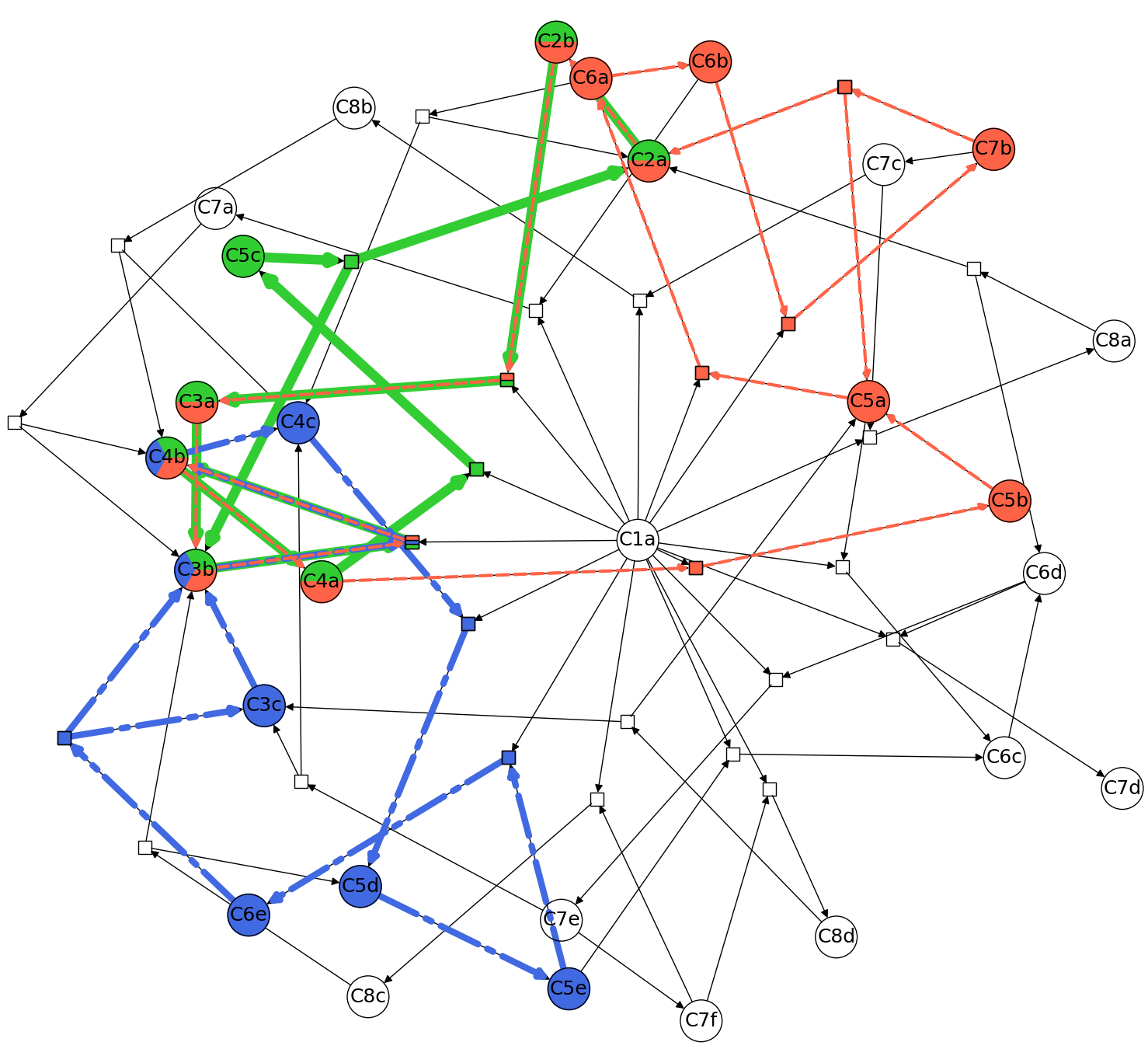}
    \includegraphics[width=0.59\textwidth]{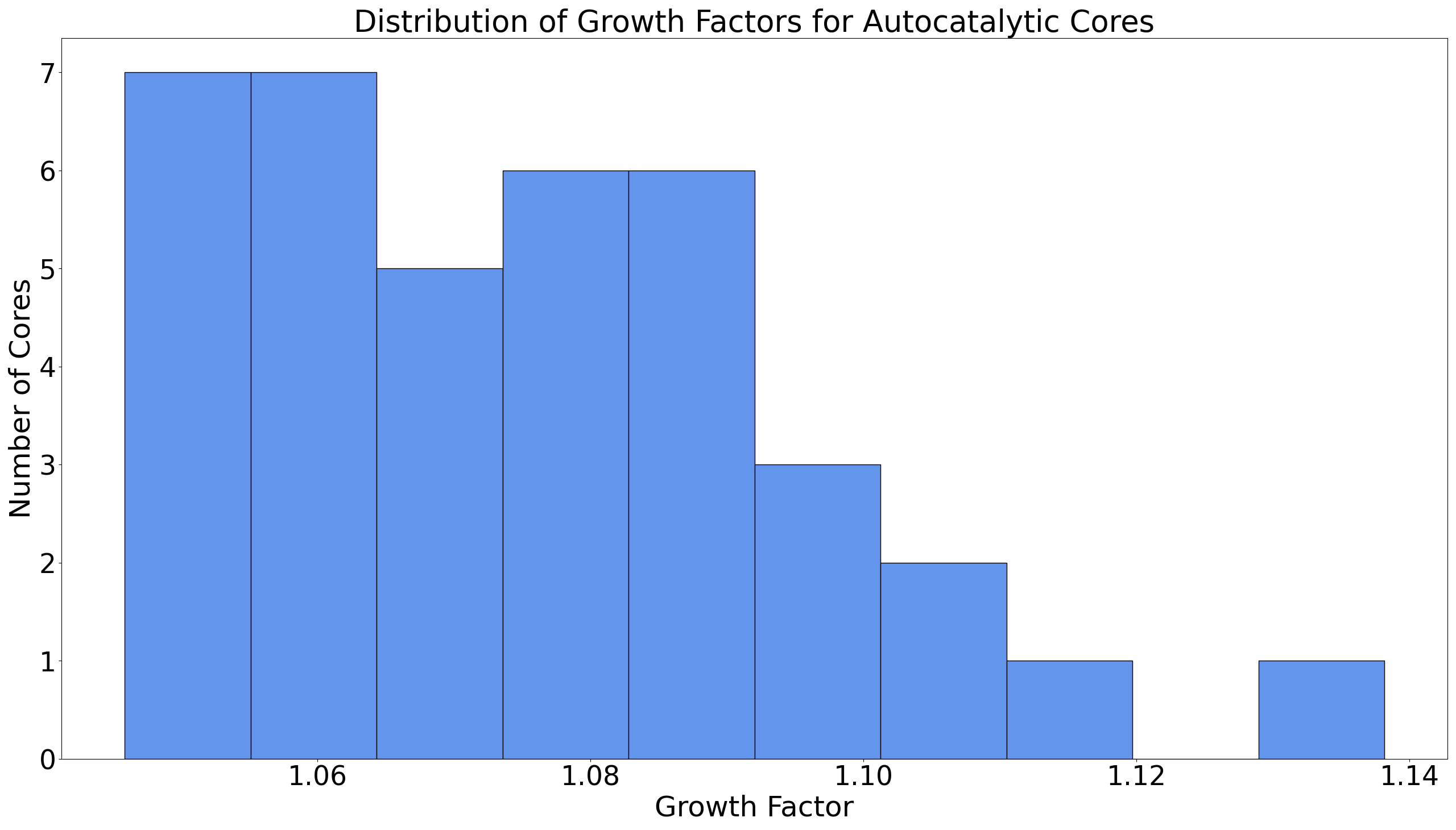}
    \caption{Autocatalytic cores in the Formose network. The left panel shows the top three cores ranked by MAF, colored green ($1.138$), blue ($1.112$), and red ($1.105$). Details of these cores are provided in Appendix~\ref{app:tables}. The right panel presents a histogram of the MAFs for all 38 autocatalytic cores identified in the network.}
    \label{fig:formose_results_2}
\end{figure}

\begin{figure}[h!]
    \centering
    \includegraphics[width=1\textwidth]{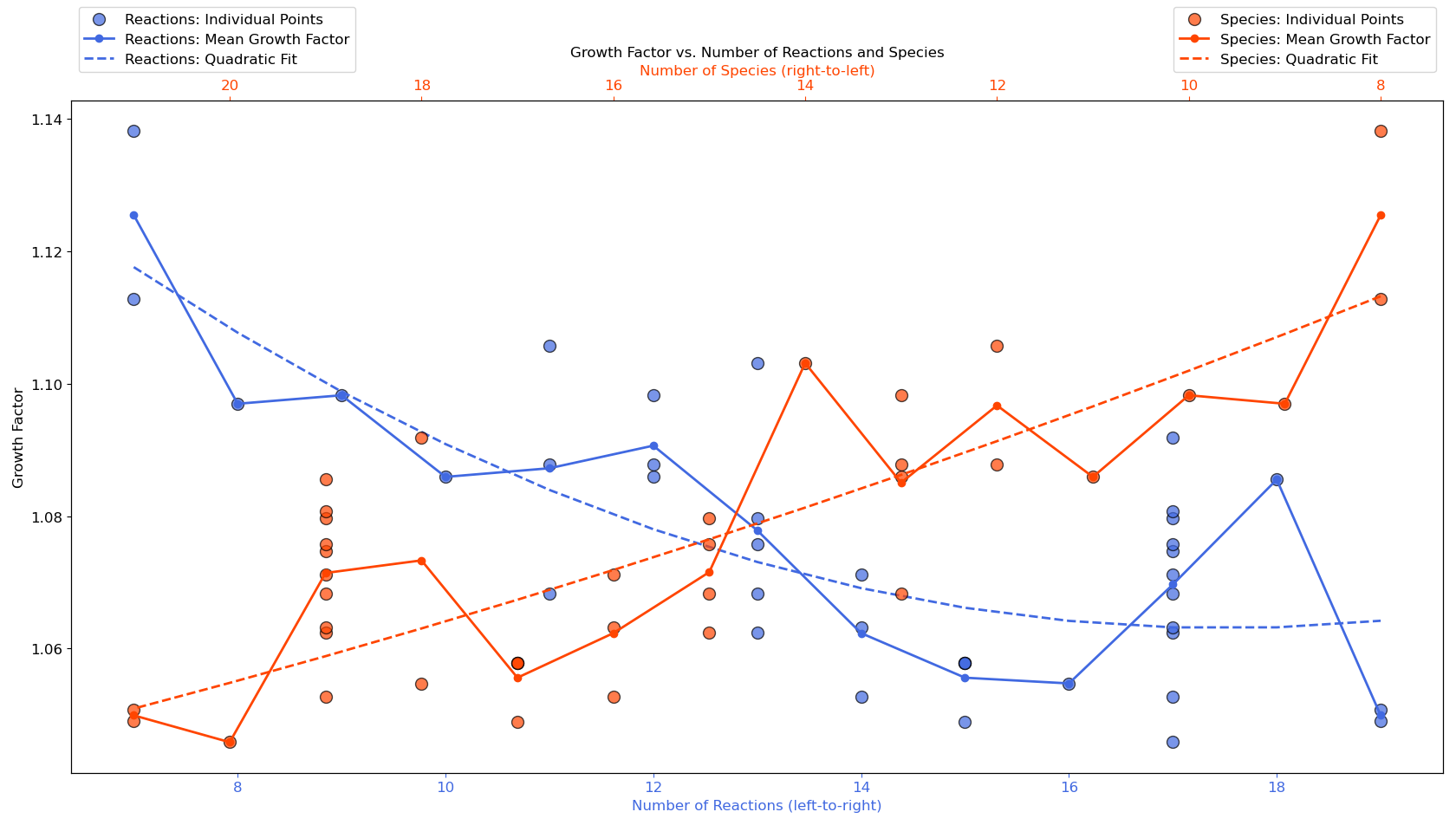}
    \caption{Relationship between the MAF and the structural complexity of autocatalytic cores in the Formose network. The plot shows how the number of species and reactions in a core influences its amplification potential.}
    \label{fig:formose_results_3}
\end{figure}

\subsection*{E. coli Core Metabolism Network}

 After reduction, the E. coli core metabolism network yielded a self-sufficient subhypergraph containing 25 species. When Algorithm~\ref{alg:1} was applied, it produced a MAF of 1.0 in 1.55 seconds over 6 iterations. This indicates a self-sustaining yet non-amplifying structure, characterized by the condition $\mathbb{Q}\mathbf{x} = 0$ for the subnetwork $\mathcal{M}$. The identified subnetwork is shown in the left panel of Figure~\ref{fig:ecoli_results_1}.

A more exhaustive analysis involved the enumeration of 581 autocatalytic cores within the network. Their MAF values are visualized in the histogram in Figure~\ref{fig:ecoli_results_2} (right), while the left panel highlights the top three cores, each achieving a MAF of 1.89. As already observed in the Formose network, the data in Figure~\ref{fig:ecoli_results_3} reinforces the trend that smaller cores tend to attain higher MAF values.

Applying Algorithm~\ref{alg:3} to this network revealed a high-performing subhypergraph comprising 13 species and 13 reactions, achieving a MAF of 2.77. This solution was obtained over 22 iterations in 234.58 seconds. Unlike the Formose case, where the result matched a known core, this structure emerges from synergistic interactions across multiple cores and supplemental reactions, as illustrated in the right panel of Figure~\ref{fig:ecoli_results_1}.

In Figure \ref{fig:ecoli_zoom_subnet} we plot the optimal subhypergraph obtained with Algorithm \ref{alg:3}. In the left plot, we draw the MAF subhypergraph. The self-amplifying nodes ($\M$) are shown in solid circles, and every other node in the subhypergraph is shown in dotted circles. Applying the algorithm proposed by \cite{gagrani2023geometry}, we listed all the \emph{autocatalytic cores} cores contained in such a subhypergraph, and hightlighted them in different colors. The main conclusion for this study is that \emph{combining} cores one may obtain better amplifying subhypergraphs.  In the right panel, the nodes (species) in the optimal subhypergraph  are decomposed into source, sink, non-amplifying, and amplifying sets (see Section~\ref{sec:features}). We also indicate which of the above cores, if any, contains each self-amplifying node (Green-C1, Blue-C2, Orange-C3, Purple-C4, Yellow-C5). The edges in the strongest subnetwork along with the reactions of self-amplifying subhypergraph contained inside of them can be found in Appendix \ref{app:tables}.

\begin{figure}[h!]
    \centering
    \includegraphics[width=0.53\textwidth]{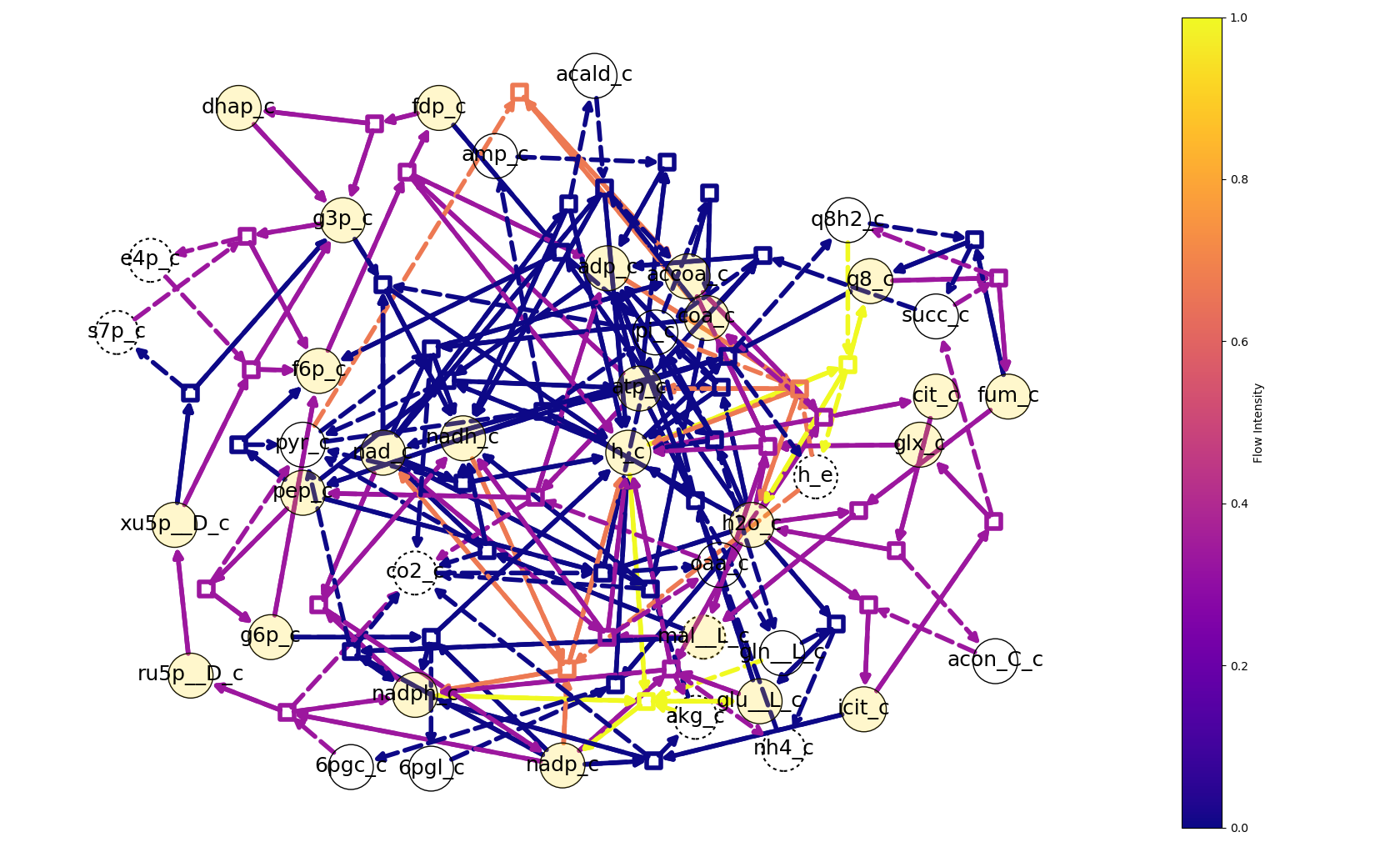}
    \includegraphics[width=0.46\textwidth]{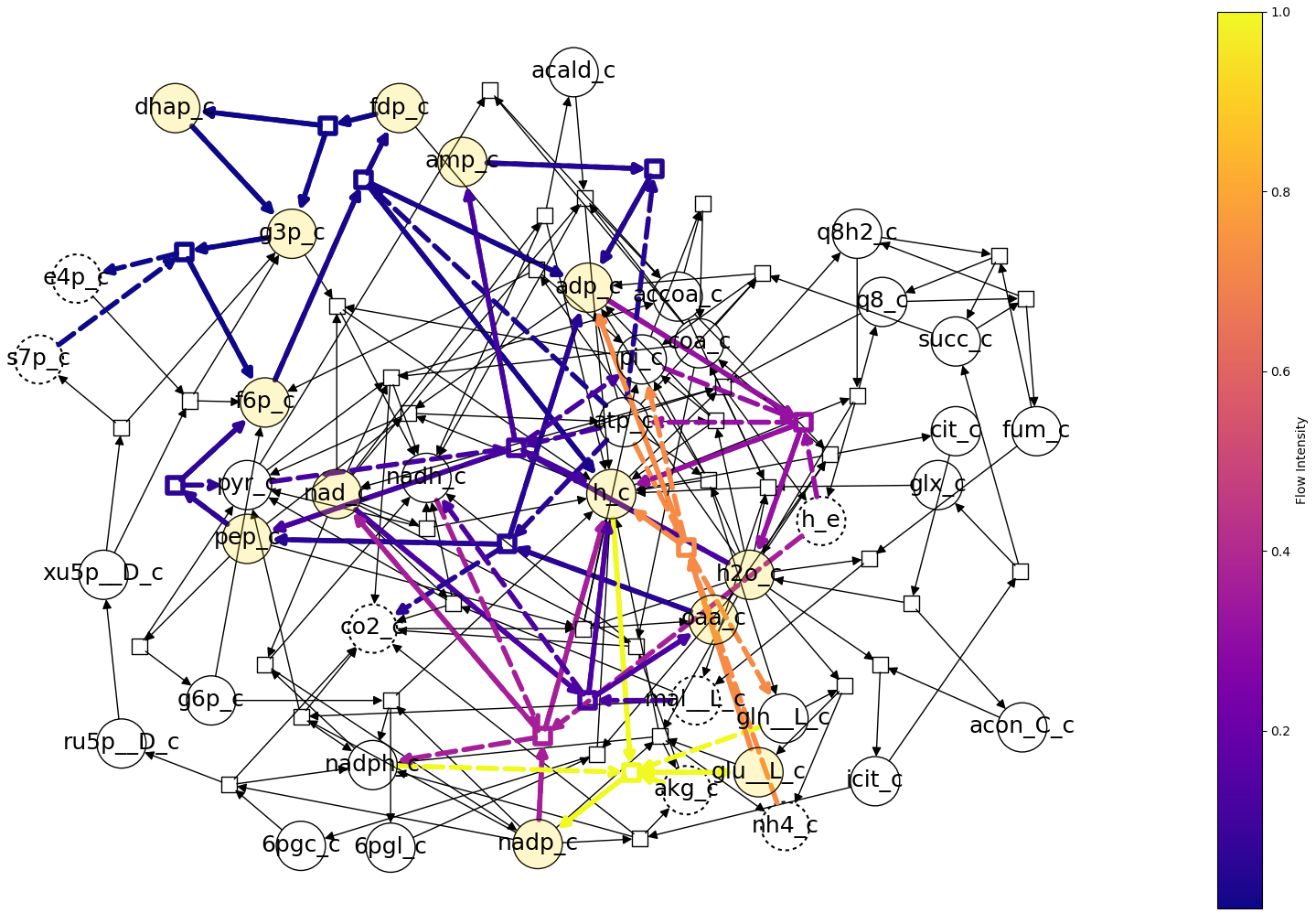}
    \caption{Application of Algorithms~\ref{alg:1} and~\ref{alg:3} to the \emph{E. coli} metabolic network. The left panel shows the largest self-sufficient subnetwork identified by Algorithm~\ref{alg:1}, while the right panel displays the autocatalytic subnetwork with the highest MAF (\textit{strongest subnetwork}) identified by Algorithm~\ref{alg:3} (also see Figure~\ref{fig:ecoli_zoom_subnet}). The legend indicates the optimal flow-intensity assigned to each reaction within the subnetwork.}
    \label{fig:ecoli_results_1}
\end{figure}

\begin{figure*}[t]
\centering
~\hspace*{-1.2cm}
\begin{subfigure}[b]{0.5\linewidth}
   \includegraphics[width=\textwidth]{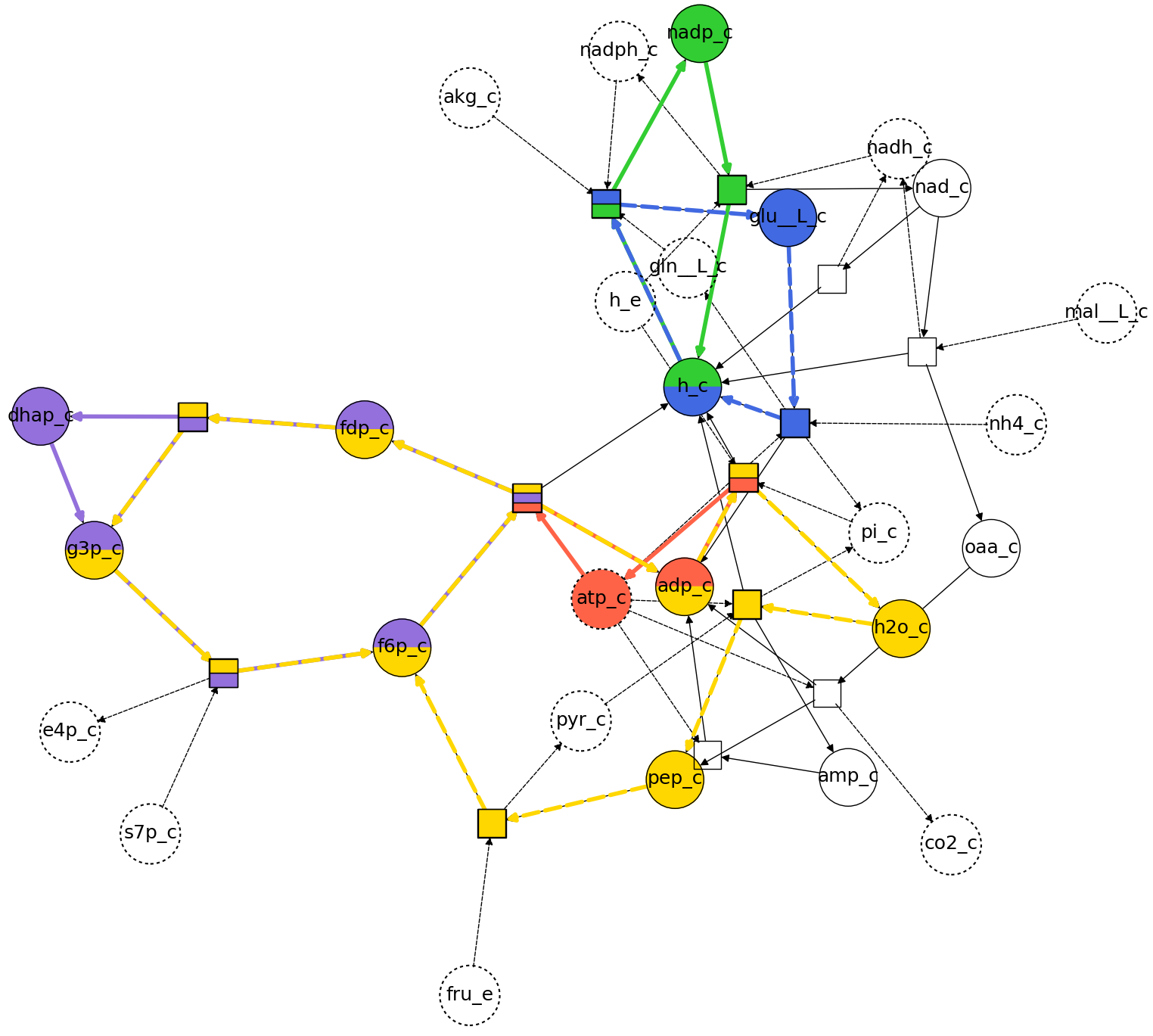}
\end{subfigure}
\begin{subfigure}[b]{0.35\textwidth}
   \includegraphics[width=\textwidth]{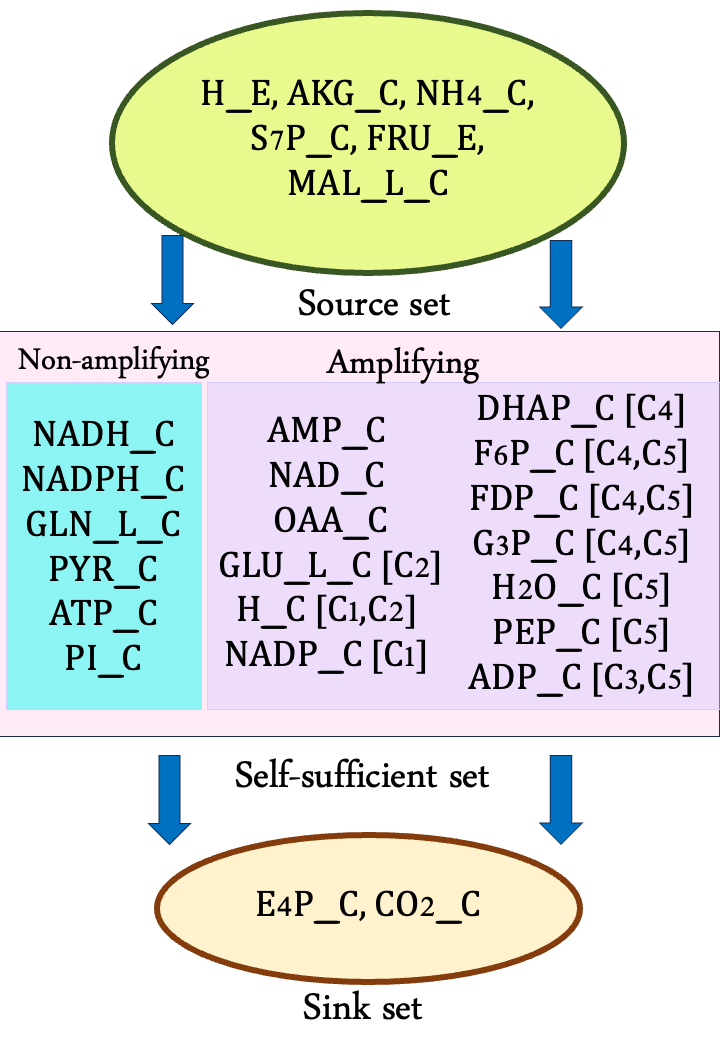}
\end{subfigure}
\caption[]{The cores (obtained by applying the algorithms in \cite{gagrani2023geometry}) contained in the solution of Algorithm \ref{alg:3} on the E.\ coli metabolism are shown in different colors in the left panel. Following Section~\ref{sec:features}, the right panel shows the species decomposition of the solution.
}
\label{fig:ecoli_zoom_subnet}
\end{figure*}

\begin{figure}[h!]
    \centering
    \includegraphics[width=0.38\textwidth]{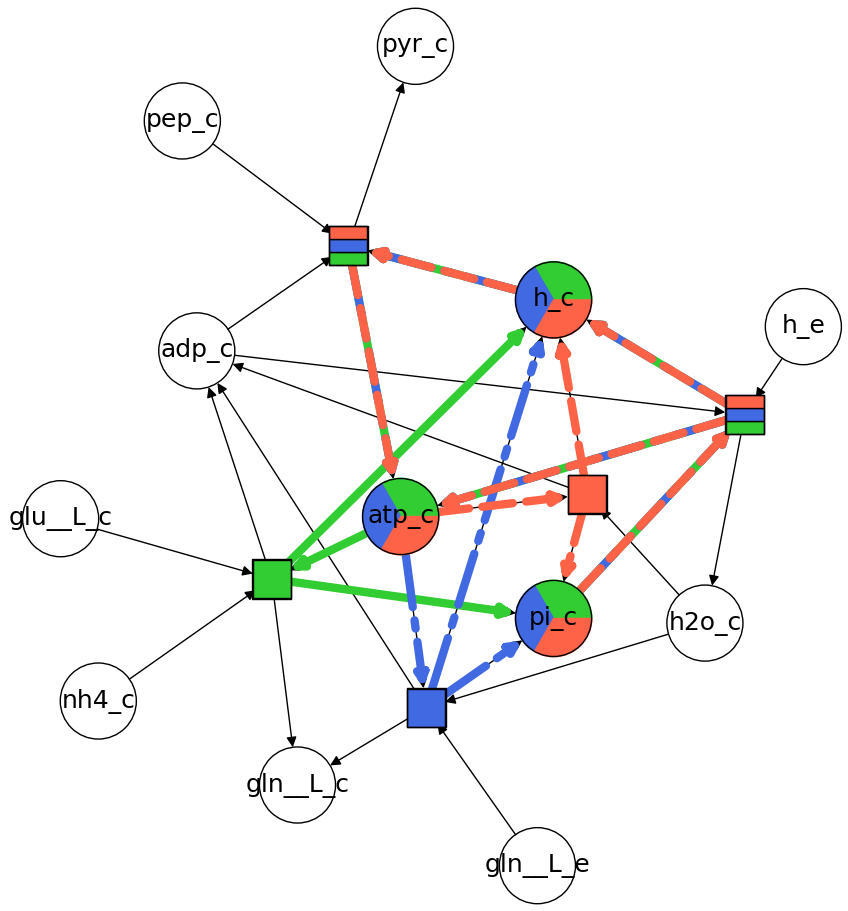}
    \includegraphics[width=0.61\textwidth]{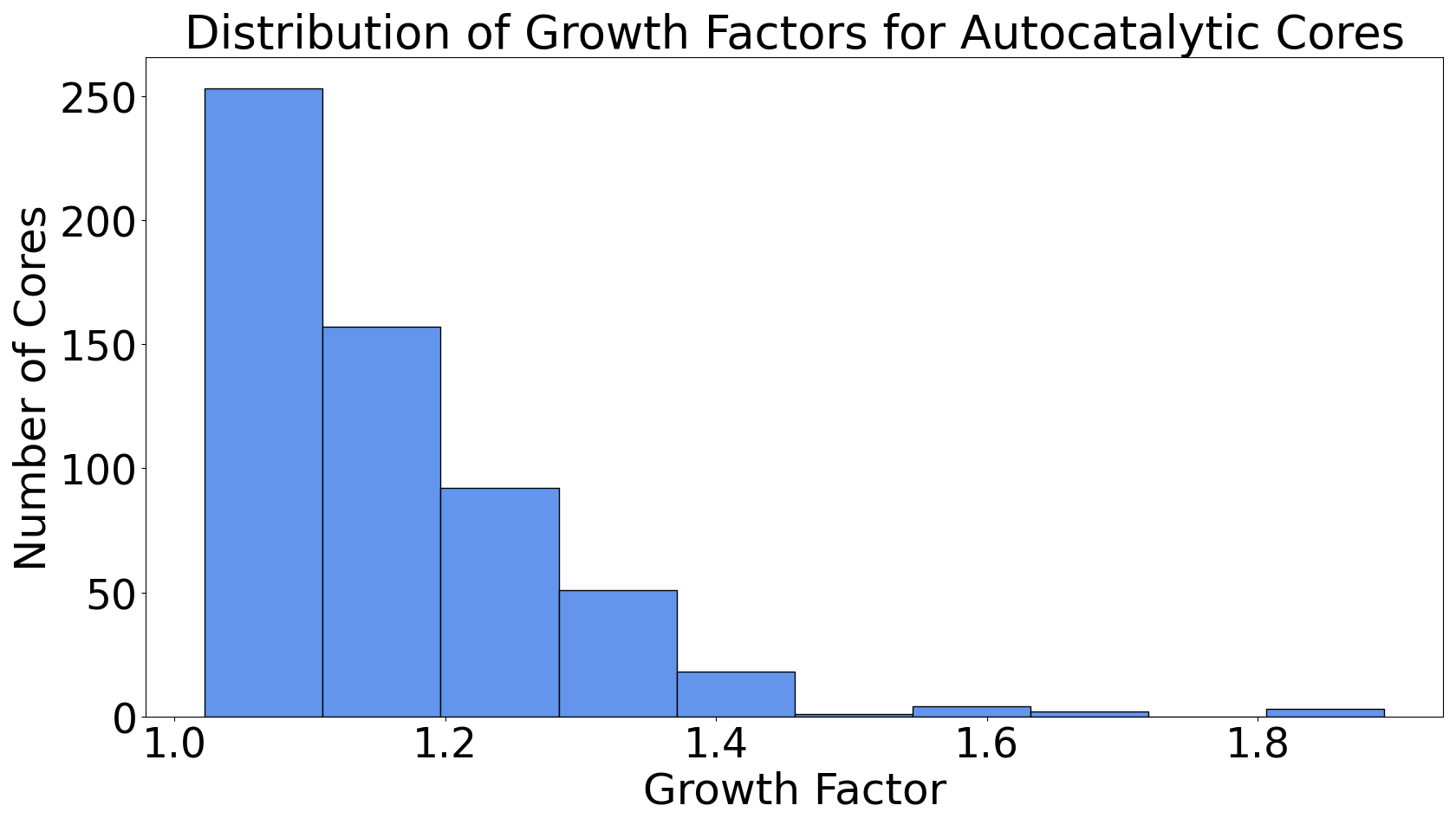}
    \caption{Autocatalytic cores in the \emph{E. coli} network. The left panel shows the top three cores, all with a MAF of $1.89$. Details of these cores are provided in Appendix~\ref{app:tables}. The right panel presents a histogram of the MAF values for all 581 autocatalytic cores identified in the network.}
    \label{fig:ecoli_results_2}
\end{figure}

\begin{figure}[h!]
    \centering
    \includegraphics[width=1\textwidth]{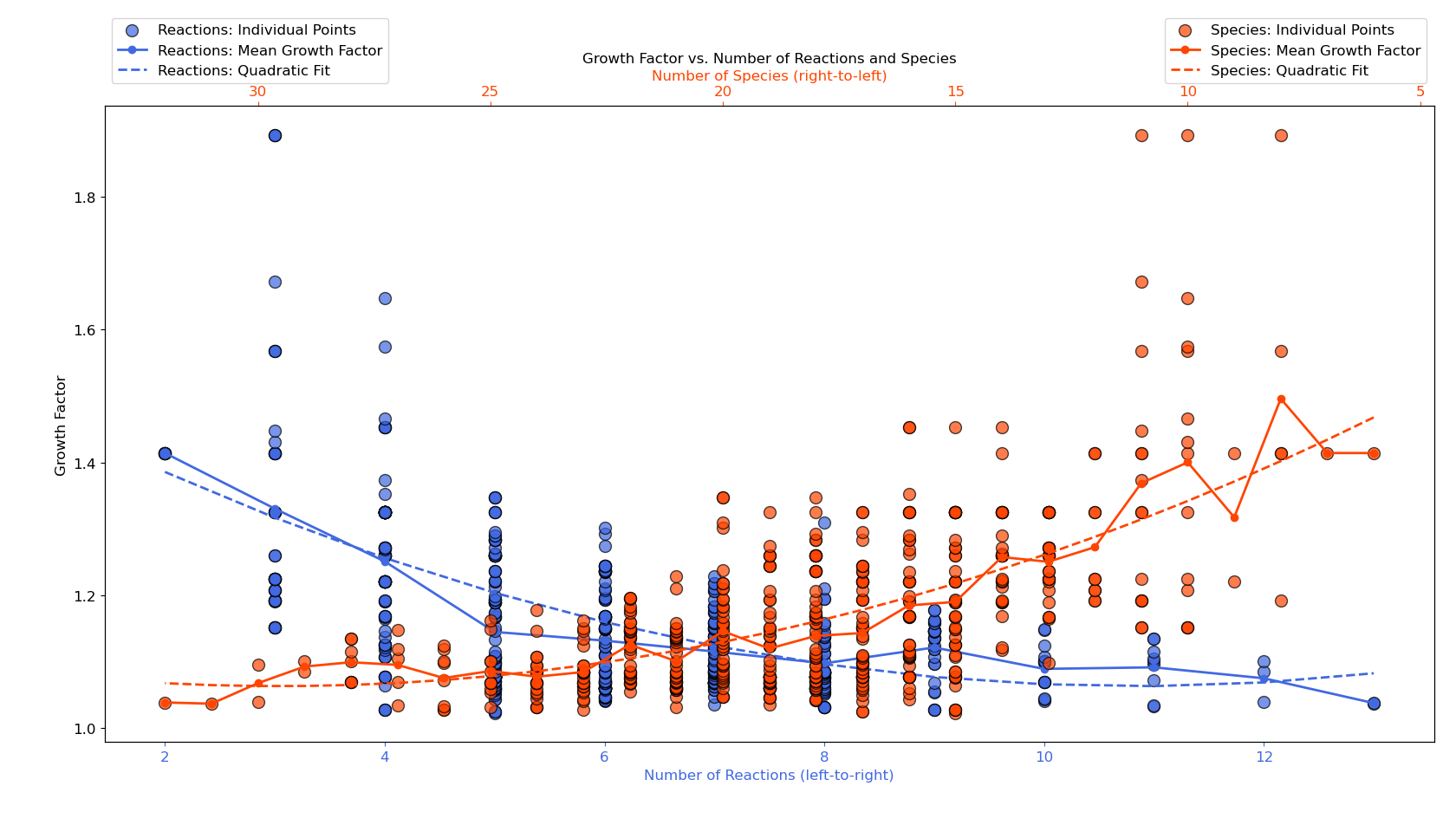}
    \caption{Relationship between MAF and the structural characteristics of autocatalytic cores in the \emph{E. coli} network, based on the number of species and reactions.}
    \label{fig:ecoli_results_3}
\end{figure}

\subsubsection*{Summary of Findings and Comparative Insights}
In the Formose network, both exhaustive enumeration and our algorithms identify the same 7‑species autocatalytic core as having the highest amplification (MAF=1.138), showing that extra reactions weaken self‑replication. In the more complex E. coli metabolism, individual cores reach MAF=1.89, but Algorithm~\ref{alg:3} finds a 13‑reaction subnetwork with MAF=2.77, exceeding any single core and revealing cooperative amplification. Interestingly, the largest autonomous subnetwork (25 species) is not autocatalytic, highlighting the balance between mass conservation and growth. Overall, Algorithms ~\ref{alg:1} and ~\ref{alg:3} both recover minimal autocatalytic motifs and expose higher‑order subnetworks that maximize self‑amplification.

\section{Conclusions}\label{sec:conclusions}

In this work, we introduce a novel framework for identifying and analyzing self-amplifying substructures in multi-directed hypergraphs, a generalization motivated by complex systems where multi-way interactions play a central role. We formalized the concept of self-amplification through the lens of a newly proposed index—the Maximal Amplification Factor (MAF)—which quantifies the net self-reinforcing potential of a subhypergraph under optimal intensity vectors on its hyperarcs.

To compute the MAF, we formulate a generalized  fractional optimization model and develop an exact iterative algorithm with convergence guarantees. We then extend this analysis to the combinatorial task of constructing subhypergraphs with the highest amplification factor, yielding a Mixed-Integer Nonlinear Programming (MINLP) model capable of identifying key subsystems that drive propagation dynamics. Our formulation accommodates real-world interpretability through additional structural constraints, making it adaptable to domain-specific applications such as chemistry, manufacturing, and social influence networks.

We validated our approach through two layers of computational study. First, experiments on synthetic hypergraphs simulating chemical reaction networks demonstrated the feasibility, robustness, and scalability of our methods, showcasing their ability to recover known amplification patterns and quantify emergent dynamics. Second, we conducted a detailed case study on two real-world chemical systems: the Formose reaction network and the \emph{E. coli} core metabolism. In both cases, our algorithms identified autocatalytic subnetworks with strong amplification signatures. For the Formose network, we recovered minimal subnetworks or cores and confirmed that the most compact structures exhibited the highest MAFs. In the more complex \emph{E. coli} network, our optimization uncovered a high-MAF subnetwork that outperformed individual cores, revealing emergent synergy among reactions.

These findings underscore the practical utility and generalizability of MAF-based analysis in uncovering catalytic and self-sustaining structures in both abstract and empirical settings. The combination of synthetic and real-data validation highlights the promise of our framework for studying self-reinforcement in higher-order networks.

This paper lays a solid foundation for computational studies in systems chemistry and prebiotic evolution, helping to elucidate fundamental principles that may have governed the origin of metabolic organization and life itself.

Overall, our study contributes to the growing body of work that seeks to extend combinatorial optimization into the realm of higher-order network structures. The notion of self-amplifying subhypergraphs offers a unifying abstraction for a wide array of processes involving self-sustaining or catalytic dynamics. Future directions include improving scalability through decomposition techniques, extending the framework to stochastic or time-evolving hypergraphs, and exploring domain-specific applications in greater depth, particularly in systems biology, process design, and network epidemiology.

\section*{Acknowledgements}

VB and GG were partially supported by grant PID2020-114594GB-C21 funded by MICIU/AEI/10.13039/501100011033;  grant RED2022-134149-T funded by MICIU/AEI
/10.13039/501100011033 (Thematic Network on Location Science and Related Problems); grant C-EXP-139-UGR23 funded by the Consejería de Universidad,
Investigación e Innovación and by the ERDF Andalusia Program 2021-2027, grant AT 21\_00032, and the IMAG-María de Maeztu grant CEX2020-001105-M /AEI /10.13039/501100011033. PG was partially funded by the National Science Foundation, Division of Environmental Biology (Grant No: DEB-2218817). PG was partially funded by the National Science Foundation, Division of Environmental Biology (Grant No: DEB-2218817).


{\small

}

\clearpage
\appendix

\section{Detailed results for the case studies} \label{app:tables}

This section provides detailed results for the case studies on the Formose and E. coli datasets. For Formose, reaction subnetworks corresponding to different figures are shown with reaction numbers matching those in the dataset. For E. coli, experimental details, including compound names and identifiers (listed in Table \ref{table:ecoli_data}), are provided, with \texttt{\_c} and \texttt{\_e} suffixes indicating cytoplasmic and extracellular molecules, respectively; reaction numbers are arbitrary and ignorable.

\begin{table}[h!]
\centering
\caption{Summary of the green core from Fig. \ref{fig:formose_results_2}.}
\begin{tabular}{ll}
\hline
\textbf{Category}            & \textbf{Details}                                                                \\ \hline
\textbf{Source Set}            & C1a formaldehyde                                                             \\ \hline
\textbf{Autocatalytic Set}   & C2a, C2b, C3a, C3b, C4a, C4b, C5c                                           \\ \hline
\textbf{Amplification Factor}       & 1.138163596847235                                                               \\ \hline
\textbf{Reactions}           &                                                                                  \\ \hline
R1                           & C2a $\to$ C2b                                                                     \\
R2                           & C1a formaldehyde + C2b $\to$ C3a                                                 \\
R3                           & C3a $\to$ C3b                                                                     \\
R5                           & C1a formaldehyde + C3b $\to$ C4b                                                 \\
R6                           & C4b $\to$ C4a                                                                     \\
R9                           & C1a formaldehyde + C4a $\to$ C5c                                                 \\
R38                          & C5c $\to$ C2a + C3b                                                              \\ \hline
\end{tabular}
\label{tab:green_core}
\end{table}

\begin{table}[h!]
\centering
\caption{Summary of the blue core from Fig. \ref{fig:formose_results_2}.}
\begin{tabular}{ll}
\hline
\textbf{Category}            & \textbf{Details}                                                                \\ \hline
\textbf{Source Set}            & C1a formaldehyde                                                             \\ \hline
\textbf{Autocatalytic Set}   & C3b, C3c dihydroxy acetone, C4b, C4c, C5d, C5e, C6e                          \\ \hline
\textbf{Amplification Factor}       & 1.1127643944856531                                                              \\ \hline
\textbf{Reactions}           &                                                                                  \\ \hline
R4                           & C3c dihydroxy acetone $\to$ C3b                                                  \\
R5                           & C1a formaldehyde + C3b $\to$ C4b                                                 \\
R7                           & C4b $\to$ C4c                                                                    \\
R10                          & C1a formaldehyde + C4c $\to$ C5d                                                 \\
R12                          & C5d $\to$ C5e                                                                    \\
R16                          & C1a formaldehyde + C5e $\to$ C6e                                                 \\
R37                          & C6e $\to$ C3b + C3c dihydroxy acetone                                            \\ \hline
\end{tabular}
\label{tab:blue_core}
\end{table}

\begin{table}[h!]
\centering
\caption{Summary of the red core from Fig. \ref{fig:formose_results_2}.}
\begin{tabular}{ll}
\hline
\textbf{Category}            & \textbf{Details}                                                                \\ \hline
\textbf{Source Set}            & C1a formaldehyde                                                             \\ \hline
\textbf{Autocatalytic Set}   & C2a, C2b, C3a, C3b, C4a, C4b, C5a, C5b, C6a, C6b, C7b                        \\ \hline
\textbf{Amplification Factor}       & 1.1056824557955995                                                              \\ \hline
\textbf{Reactions}           &                                                                                  \\ \hline
R1                           & C2a $\to$ C2b                                                                     \\
R2                           & C1a formaldehyde + C2b $\to$ C3a                                                 \\
R3                           & C3a $\to$ C3b                                                                     \\
R5                           & C1a formaldehyde + C3b $\to$ C4b                                                 \\
R6                           & C4b $\to$ C4a                                                                     \\
R8                           & C1a formaldehyde + C4a $\to$ C5b                                                 \\
R11                          & C5b $\to$ C5a                                                                     \\
R13                          & C1a formaldehyde + C5a $\to$ C6a                                                 \\
R17                          & C6a $\to$ C6b                                                                     \\
R20                          & C1a formaldehyde + C6b $\to$ C7b                                                 \\
R34                          & C7b $\to$ C2a + C5a                                                              \\ \hline
\end{tabular}
\label{tab:red_core}
\end{table}

\begin{table}[h!]
\centering
\caption{Summary of the green core from Fig. \ref{fig:ecoli_results_2}.}
\begin{tabular}{ll}
\hline
\textbf{Category}            & \textbf{Details}                                                                \\ \hline
\textbf{Source Set}            & pep\_c, h\_e, glu\_\_L\_c, nh4\_c                                           \\ \hline
\textbf{Sink Set}           & pyr\_c, h2o\_c, gln\_\_L\_c                                                  \\ \hline
\textbf{Non-autocatalytic Set} & adp\_c                                                                      \\ \hline
\textbf{Autocatalytic Set}   & atp\_c, h\_c, pi\_c                                                         \\ \hline
\textbf{Amplification Factor}       & 1.8932605011330286                                                              \\ \hline
\textbf{Reactions}           &                                                                                  \\ \hline
R18                          & adp\_c + pi\_c + 4h\_e $\to$ atp\_c + 3h\_c + h2o\_c                           \\
R20                          & adp\_c + h\_c + pep\_c $\to$ atp\_c + pyr\_c                                    \\
R41                          & atp\_c + glu\_\_L\_c + nh4\_c $\to$ adp\_c + h\_c + pi\_c + gln\_\_L\_c        \\ \hline
\end{tabular}
\label{tab:green_core_details}
\end{table}

\begin{table}[h!]
\centering
\caption{Summary of the blue core from Fig. \ref{fig:ecoli_results_2}.}
\begin{tabular}{ll}
\hline
\textbf{Category}            & \textbf{Details}                                                                \\ \hline
\textbf{Source Set}            & pep\_c, h\_e, gln\_\_L\_e                                                   \\ \hline
\textbf{Sink Set}           & pyr\_c, gln\_\_L\_c                                                         \\ \hline
\textbf{Non-autocatalytic Set} & adp\_c, h2o\_c                                                              \\ \hline
\textbf{Autocatalytic Set}   & atp\_c, h\_c, pi\_c                                                         \\ \hline
\textbf{Amplification Factor}       & 1.8932605011330286                                                              \\ \hline
\textbf{Reactions}           &                                                                                  \\ \hline
R18                          & adp\_c + pi\_c + 4h\_e $\to$ atp\_c + 3h\_c + h2o\_c                           \\
R20                          & adp\_c + h\_c + pep\_c $\to$ atp\_c + pyr\_c                                    \\
R42                          & atp\_c + h2o\_c + gln\_\_L\_e $\to$ adp\_c + h\_c + pi\_c + gln\_\_L\_c        \\ \hline
\end{tabular}
\label{tab:blue_core_details}
\end{table}

\begin{table}[h!]
\centering
\caption{Summary of the red core from Fig. \ref{fig:ecoli_results_2}.}
\begin{tabular}{ll}
\hline
\textbf{Category}            & \textbf{Details}                                                                \\ \hline
\textbf{Source Set}            & pep\_c, h\_e                                                                \\ \hline
\textbf{Sink Set}           & pyr\_c                                                                      \\ \hline
\textbf{Non-autocatalytic Set} & adp\_c, h2o\_c                                                              \\ \hline
\textbf{Autocatalytic Set}   & atp\_c, h\_c, pi\_c                                                         \\ \hline
\textbf{Amplification Factor}       & 1.8932605011330286                                                              \\ \hline
\textbf{Reactions}           &                                                                                  \\ \hline
R13                          & atp\_c + h2o\_c $\to$ adp\_c + h\_c + pi\_c                                     \\
R18                          & adp\_c + pi\_c + 4h\_e $\to$ atp\_c + 3h\_c + h2o\_c                           \\
R20                          & adp\_c + h\_c + pep\_c $\to$ atp\_c + pyr\_c                                    \\ \hline
\end{tabular}
\label{tab:red_core_details}
\end{table}

\begin{table}[h!]
\centering
\caption{Summary of data from Algorithm \ref{alg:3} on the E. coli Dataset.}
\begin{tabular}{ll}
\hline
\textbf{Category}           & \textbf{Details}                                                                                       \\ \hline
\textbf{Amplification Factor}      & 2.7678080214824665                                                                                     \\ \hline
\textbf{Reactions}          &                                                                                                        \\ \hline
                          & atp\_c + f6p\_c $\to$ adp\_c + fdp\_c + h\_c                                                          \\
                         & atp\_c + oaa\_c $\to$ adp\_c + co2\_c + pep\_c                                                       \\
                         & atp\_c + pyr\_c + h2o\_c $\to$ 2h\_c + pep\_c + pi\_c + amp\_c                                      \\
                         & atp\_c + amp\_c $\to$ 2adp\_c                                                                          \\
                         & adp\_c + pi\_c + 4h\_e $\to$ atp\_c + 3h\_c + h2o\_c                                                \\
                        & g3p\_c + s7p\_c $\to$ f6p\_c + e4p\_c                                                                 \\
                        & nadh\_c + 2h\_e + nadp\_c $\to$ 2h\_c + nad\_c + nadph\_c                                          \\
                        & dhap\_c $\to$ g3p\_c                                                                                  \\
                        & fdp\_c $\to$ g3p\_c + dhap\_c                                                                         \\
                        & pep\_c + fru\_e $\to$ f6p\_c + pyr\_c                                                                 \\
                        & atp\_c + glu\_\_L\_c + nh4\_c $\to$ adp\_c + h\_c + pi\_c + gln\_\_L\_c                          \\
                      & h\_c + akg\_c + gln\_\_L\_c + nadph\_c $\to$ 2glu\_\_L\_c + nadp\_c                               \\
                        & nad\_c + mal\_\_L\_c $\to$ h\_c + nadh\_c + oaa\_c                                                   \\ \hline
\end{tabular}
\label{tab:ecoli_alg3_results}
\end{table}

\begin{table}[h!]
\centering
\caption{Summary of Core 1 (Green) from Fig. \ref{fig:ecoli_zoom_subnet}.}
\begin{tabular}{ll}
\hline
\textbf{Category}            & \textbf{Details}                                                                \\ \hline
\textbf{Source Set}            & nadh\_c, akg\_c, h\_e, gln\_\_L\_c                                           \\ \hline
\textbf{Sink Set}           & nad\_c, glu\_\_L\_c                                                         \\ \hline
\textbf{Non-autocatalytic Set} & nadph\_c                                                                   \\ \hline
\textbf{Autocatalytic Set}   & h\_c, nadp\_c                                                               \\ \hline
\textbf{Growth Factor}       & 1.4142011834319526                                                              \\ \hline
\textbf{Reactions}           &                                                                                  \\ \hline
R7                           & nadh\_c + 2h\_e + nadp\_c $\to$ 2h\_c + nad\_c + nadph\_c                       \\
R12                          & h\_c + akg\_c + gln\_\_L\_c + nadph\_c $\to$ 2glu\_\_L\_c + nadp\_c             \\ \hline
\end{tabular}
\label{tab:green_core_algorithm3}
\end{table}
\begin{table}[h!]
\centering
\caption{Summary of Core 2 (Blue) from Fig. \ref{fig:ecoli_zoom_subnet}.}
\begin{tabular}{ll}
\hline
\textbf{Category}            & \textbf{Details}                                                                \\ \hline
\textbf{Source Set}            & atp\_c, akg\_c, nadph\_c, nh4\_c                                            \\ \hline
\textbf{Sink Set}           & adp\_c, pi\_c, nadp\_c                                                      \\ \hline
\textbf{Non-autocatalytic Set} & gln\_\_L\_c                                                                 \\ \hline
\textbf{Autocatalytic Set}   & h\_c, glu\_\_L\_c                                                           \\ \hline
\textbf{Growth Factor}       & 1.4142011834319526                                                              \\ \hline
\textbf{Reactions}           &                                                                                  \\ \hline
R11                          & atp\_c + glu\_\_L\_c + nh4\_c $\to$ adp\_c + h\_c + pi\_c + gln\_\_L\_c          \\
R12                          & h\_c + akg\_c + gln\_\_L\_c + nadph\_c $\to$ 2glu\_\_L\_c + nadp\_c             \\ \hline
\end{tabular}
\label{tab:blue_core_algorithm3}
\end{table}
\begin{table}[h!]
\centering
\caption{Summary of Core 3 (Orange) from Fig. \ref{fig:ecoli_zoom_subnet}.}
\begin{tabular}{ll}
\hline
\textbf{Category}            & \textbf{Details}                                                                \\ \hline
\textbf{Source Set}            & pi\_c, amp\_c, h\_e                                                         \\ \hline
\textbf{Sink Set}           & h\_c, h2o\_c                                                                \\ \hline
\textbf{Autocatalytic Set}   & adp\_c, atp\_c                                                              \\ \hline
\textbf{Growth Factor}       & 1.4142011834319526                                                              \\ \hline
\textbf{Reactions}           &                                                                                  \\ \hline
R4                           & atp\_c + amp\_c $\to$ 2adp\_c                                                   \\
R5                           & adp\_c + pi\_c + 4h\_e $\to$ atp\_c + 3h\_c + h2o\_c                           \\ \hline
\end{tabular}
\label{tab:orange_core_algorithm3}
\end{table}
\begin{table}[h!]
\centering
\caption{Summary of Core 4 (Purple) from Fig. \ref{fig:ecoli_zoom_subnet}.}
\begin{tabular}{ll}
\hline
\textbf{Category}            & \textbf{Details}                                                                \\ \hline
\textbf{Source Set}            & atp\_c, s7p\_c                                                              \\ \hline
\textbf{Sink Set}           & adp\_c, h\_c, e4p\_c                                                        \\ \hline
\textbf{Autocatalytic Set}   & f6p\_c, fdp\_c, g3p\_c, dhap\_c                                             \\ \hline
\textbf{Growth Factor}       & 1.2207340620350884                                                              \\ \hline
\textbf{Reactions}           &                                                                                  \\ \hline
R1                           & atp\_c + f6p\_c $\to$ adp\_c + fdp\_c + h\_c                                    \\
R6                           & g3p\_c + s7p\_c $\to$ f6p\_c + e4p\_c                                           \\
R8                           & dhap\_c $\to$ g3p\_c                                                            \\
R9                           & fdp\_c $\to$ g3p\_c + dhap\_c                                                  \\ \hline
\end{tabular}
\label{tab:purple_core_algorithm3}
\end{table}
\begin{table}[h!]
\centering
\caption{Summary of Core 5 (Yellow) from Fig. \ref{fig:ecoli_zoom_subnet}.}
\begin{tabular}{ll}
\hline
\textbf{Category}            & \textbf{Details}                                                                \\ \hline
\textbf{Source Set}            & h\_e, s7p\_c, fru\_e                                                        \\ \hline
\textbf{Sink Set}           & h\_c, amp\_c, e4p\_c, dhap\_c                                               \\ \hline
\textbf{Non-autocatalytic Set} & atp\_c, pyr\_c, pi\_c                                                     \\ \hline
\textbf{Autocatalytic Set}   & adp\_c, f6p\_c, fdp\_c, h2o\_c, pep\_c, g3p\_c                            \\ \hline
\textbf{Growth Factor}       & 1.2207340620350884                                                              \\ \hline
\textbf{Reactions}           &                                                                                  \\ \hline
R1                           & atp\_c + f6p\_c $\to$ adp\_c + fdp\_c + h\_c                                    \\
R3                           & atp\_c + pyr\_c + h2o\_c $\to$ 2h\_c + pep\_c + pi\_c + amp\_c                 \\
R5                           & adp\_c + pi\_c + 4h\_e $\to$ atp\_c + 3h\_c + h2o\_c                           \\
R6                           & g3p\_c + s7p\_c $\to$ f6p\_c + e4p\_c                                           \\
R9                           & fdp\_c $\to$ g3p\_c + dhap\_c                                                  \\
R10                          & pep\_c + fru\_e $\to$ f6p\_c + pyr\_c                                          \\ \hline
\end{tabular}
\label{tab:yellow_core_algorithm3}
\end{table}

\begin{table}[]
\scriptsize
\center
\begin{tabular}{cc}
\textbf{Identifier}   & \textbf{Compound}                                              \\
\hline
adp\_c       & ADP C10H12N5O10P2                                     \\
atp\_c       & ATP C10H12N5O13P3                                     \\
f6p\_c       & D-Fructose 6-phosphate                                \\
fdp\_c       & D-Fructose 1                                          \\
h\_c         & H+                                                    \\
accoa\_c     & Acetyl-CoA                                            \\
coa\_c       & Coenzyme A                                            \\
for\_c       & Formate                                               \\
pyr\_c       & Pyruvate                                              \\
g6p\_c       & D-Glucose 6-phosphate                                 \\
13dpg\_c     & 3-Phospho-D-glyceroyl phosphate                       \\
3pg\_c       & 3-Phospho-D-glycerate                                 \\
6pgc\_c      & 6-Phospho-D-gluconate                                 \\
6pgl\_c      & 6-phospho-D-glucono-1                                 \\
h2o\_c       & H2O H2O                                               \\
acald\_c     & Acetaldehyde                                          \\
nad\_c       & Nicotinamide adenine dinucleotide                     \\
nadh\_c      & Nicotinamide adenine dinucleotide - reduced           \\
2pg\_c       & D-Glycerate 2-phosphate                               \\
etoh\_c      & Ethanol                                               \\
ac\_c        & Acetate                                               \\
actp\_c      & Acetyl phosphate                                      \\
co2\_c       & CO2 CO2                                               \\
oaa\_c       & Oxaloacetate                                          \\
pep\_c       & Phosphoenolpyruvate                                   \\
pi\_c        & Phosphate                                             \\
acon\_C\_c   & Cis-Aconitate                                         \\
cit\_c       & Citrate                                               \\
icit\_c      & Isocitrate                                            \\
amp\_c       & AMP C10H12N5O7P                                       \\
akg\_c       & 2-Oxoglutarate                                        \\
succoa\_c    & Succinyl-CoA                                          \\
h\_e         & H+                                                    \\
e4p\_c       & D-Erythrose 4-phosphate                               \\
g3p\_c       & Glyceraldehyde 3-phosphate                            \\
gln\_\_L\_c  & L-Glutamine                                           \\
glu\_\_L\_c  & L-Glutamate                                           \\
nadp\_c      & Nicotinamide adenine dinucleotide phosphate           \\
nadph\_c     & Nicotinamide adenine dinucleotide phosphate - reduced \\
r5p\_c       & Alpha-D-Ribose 5-phosphate                            \\
ru5p\_\_D\_c & D-Ribulose 5-phosphate                                \\
xu5p\_\_D\_c & D-Xylulose 5-phosphate                                \\
o2\_c        & O2 O2                                                 \\
q8\_c        & Ubiquinone-8                                          \\
q8h2\_c      & Ubiquinol-8                                           \\
fum\_c       & Fumarate                                              \\
succ\_c      & Succinate                                             \\
s7p\_c       & Sedoheptulose 7-phosphate                             \\
dhap\_c      & Dihydroxyacetone phosphate                            \\
fru\_e       & D-Fructose                                            \\
mal\_\_L\_c  & L-Malate                                              \\
glc\_\_D\_e  & D-Glucose                                             \\
nh4\_c       & Ammonium                                              \\
gln\_\_L\_e  & L-Glutamine                                           \\
glx\_c       & Glyoxylate                                            \\
lac\_\_D\_c  & D-Lactate                                            
\end{tabular}
\caption{Table of identifier and compound names for the E.\ coli dataset.}\label{table:ecoli_data}
\end{table}

\end{document}